\newcommand{\m}{\to}
\newcommand{\bk}{\mathbf k}
\providecommand{\HH}{\mathbf H}
\providecommand{\CC}{\mathbf C}
\providecommand{\PBC}{\ensuremath\mathrm{PBC}}
\providecommand{\SPB}{\ensuremath\mathrm{SPB}}
\providecommand{\MPB}{\ensuremath\mathrm{MPB}}
\providecommand{\FI}{\ensuremath\mathsf{FI}}
\providecommand{\VIC}{\ensuremath\mathsf{VIC}}
\providecommand{\CB}{\ensuremath\mathsf{CB}}
\providecommand{\SI}{\ensuremath\mathsf{SI}}
\providecommand{\Cat}{\ensuremath\mathsf{C}}
\providecommand{\A}{\mathcal{A}}
\providecommand{\C}{\ensuremath\mathsf{C}}
\providecommand{\hooklongrightarrow}{\lhook\joinrel\longrightarrow}
\providecommand{\Z}{\ensuremath\mathbb Z}
\providecommand{\Q}{\ensuremath\mathbb Q}
\DeclareMathOperator{\im}{im}
\DeclareMathOperator{\coker}{coker}
\DeclareMathOperator{\Hom}{Hom}
\DeclareMathOperator{\End}{End}
\DeclareMathOperator{\Aut}{Aut}
\DeclareMathOperator{\GL}{GL}
\DeclareMathOperator{\Sp}{Sp}
\DeclareMathOperator{\Mod}{Mod}
\DeclareMathOperator{\Lk}{Lk}
\DeclareMathOperator{\Ind}{Ind}
\newcommand{\U}{\mathfrak U}
\newcommand{\G}{\mathrm{G}}
\newcommand{\cA}{\mathcal{A}}\newcommand{\cB}{\mathcal{B}}
\newcommand{\cC}{\mathcal{C}}
\newcommand{\cF}{\mathcal{F}}
\newcommand{\cK}{\mathcal{K}}
\newcommand{\cM}{\mathcal{M}}
\newcommand{\cP}{\mathcal{P}}
\newcommand{\cW}{\mathcal{W}}
\newcommand{\I}{{\bf I}}
\definecolor{grey}{gray}{.5}
\numberwithin{thmcounter}{section}
\newaliascnt{thmauto}{thmcounter}
\newaliascnt{Defauto}{thmcounter}
\newaliascnt{exauto}{thmcounter}
\newaliascnt{lemauto}{thmcounter}
\newaliascnt{propauto}{thmcounter}
\newaliascnt{corauto}{thmcounter}
\newaliascnt{remauto}{thmcounter}
\newaliascnt{convauto}{thmcounter}
\newtheorem{atheorem}{Theorem}
\newtheorem{theorem}[thmauto]{Theorem}
\newtheorem{lemma}[lemauto]{Lemma}
\newtheorem{proposition}[propauto]{Proposition}
\newtheorem{corollary}[corauto]{Corollary}
\theoremstyle{definition}
\newtheorem{definition}[Defauto]{Definition}
\newtheorem{remark}[remauto]{Remark}
\title{Quantitative representation stability over linear groups}
\author{Jeremy Miller}
\thanks{Jeremy Miller was supported in part by National Science Foundation grant DMS-1709726.}
\address{Department of Mathematics, Purdue University, USA}
\email{jeremykmiller@purdue.edu}
\author{Jennifer C. H. Wilson}
\address{Department of Mathematics, Stanford University, USA}
\email{jchw@stanford.edu}
\date{\today}
\begin{document}

\begin{abstract}

We introduce a technique for proving quantitative representation stability theorems for sequences of representations of certain finite linear groups over a field of characteristic zero. In particular, we prove a vanishing result for higher syzygies  of $\VIC$ and $\SI$-modules, which can be thought of as a weaker version of a regularity theorem of Church-Ellenberg \cite[Theorem A]{CE} in the context of $\FI$-modules.   We apply these techniques to the rational homology of congruence subgroups of mapping class groups and congruence subgroups of automorphism groups of free groups. This partially resolves a question raised by Church and Putman--Sam \cite[Remark 1.8]{PS}. We also prove new homological stability results for mapping class groups and automorphism groups of free groups with twisted coefficients.

\end{abstract}
\maketitle

\tableofcontents


\section{Introduction}

Putman--Sam \cite{PS} introduced techniques for proving \emph{representation stability} results in the sense of Church--Ellenberg--Farb \cite{CEF} for sequences of representations of several families of finite linear groups. They applied their tools to prove stability results for the homology groups of congruence subgroups of mapping class groups  and automorphism groups of free groups. 
In this paper, we introduce new techniques that allow us to establish explicit stable ranges. Moreover, our methods do not require that we work with homology groups which are finitely generated. These stronger results come at the cost of working with field coefficients of characteristic zero.

\subsection{Stability for congruence subgroups} The study of representation stability concerns the following framework:  fix a sequence of groups with inclusions \[\G_0 \hooklongrightarrow \G_1 \hooklongrightarrow  \G_2 \hooklongrightarrow  \G_3 \hooklongrightarrow  \cdots \] such as symmetric groups $S_n$, general linear groups $\GL_n(\bk)$, or symplectic groups $\Sp_{2n}(\bk)$. Fix a commutative ring $R$. Let $\{\cA_n\}$ be a sequence of $R[\G_n]$--modules with the data of $\G_n$--equivariant maps $\cA_n \m \cA_{n+1}$. The sequence $\{\cA_n\}$ is said to have \emph{generation degree} $\leq d$ if, for all $n \geq d$, the $R[\G_{n+1}]$--module generated by the image of $\cA_n$ is all of $\cA_{n+1}$. Informally, we say that the sequence $\{\cA_n\}$ stabilizes if its generation degree is finite. In this paper we also discuss a related notion called \emph{presentation degree}.

The main examples of spaces that we consider are classifying spaces of congruence subgroups of mapping class groups and congruence subgroups of automorphism groups of free groups. Let $\Mod(\Sigma_{g,r})$ denote the mapping class group of $\Sigma_{g,r}$, the compact orientable surface of genus $g$ with $r$ boundary components. The mapping class group acts on $H_1(\Sigma_{g,r})$. For $r \leq 1$, this action preserves the symplectic intersection form and so we get a map $$\Mod(\Sigma_{g,r}) \m \Sp_{2g}(\Z)$$ to the group $\Sp_{2g}(\Z)$ of symplectomorphisms of $\Z^{2g}$. 
Reducing modulo $p$ gives a map $$\Mod(\Sigma_{g,r}) \m \Sp_{2g}(\Z/p\Z)$$ and we denote the kernel by $\Mod(\Sigma_{g,r},p)$. This group is often called the \emph{level $p$ congruence subgroup of $\Mod(\Sigma_{g,r})$.} For $r=0$, the classifying space of this group has the homotopy type of the moduli stack of smooth genus $g$ complex curves with full level $p$ structure. For $r \leq 1$, the homology groups $H_i(\Mod(\Sigma_{g,r},p);R)$ have the structure of a $R[\Sp_{2g}(\Z/p\Z)]$--module. For $r=1$, the inclusions of surfaces $\Sigma_{g,1} \hookrightarrow \Sigma_{g+1, 1}$ induce $\Sp_{2g}(\Z/p\Z)$--equivariant maps $$H_i(\Mod(\Sigma_{g,1},p);R) \m H_i(\Mod(\Sigma_{g+1,1},p);R)$$ which allow us to make sense of stability. Our first result is the following.

\begin{atheorem} Let $p \in \Z$ be a prime and $R$ a field of characteristic zero. The sequence $\{H_i(\Mod(\Sigma_{g,1},p);R) \}$ has generation degree $\leq \begin{cases}
0 & \text{for } i=0 \\
5 & \text{for } i=1 \\
(8)3^{2i-3} & \text{for } i>1. 
\end{cases}$
\label{TheoremMod}
\end{atheorem}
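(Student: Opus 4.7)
The plan is to apply the higher syzygy vanishing theorem for $\SI$-modules (developed earlier in the paper, as announced in the abstract) to the sequence $V_i(g) := H_i(\Mod(\Sigma_{g,1},p); R)$, and to induct on the homological degree $i$. First, I would verify that $\{V_i(g)\}_g$ carries a natural $\SI$-module structure: the $\Sp_{2g}(\Z/p\Z)$-action is induced by conjugation via the exact sequence $1 \to \Mod(\Sigma_{g,1},p) \to \Mod(\Sigma_{g,1}) \to \Sp_{2g}(\Z/p\Z) \to 1$, and the boundary-connect-sum inclusions $\Sigma_{g,1}\hookrightarrow\Sigma_{g+1,1}$ supply the stabilization morphisms compatible with the symplectic injections.

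The geometric input would be a highly-connected $\Mod(\Sigma_{g,1})$-equivariant semi-simplicial set whose simplex stabilizers are (products of) mapping class groups of subsurfaces---a natural candidate is a complex of tethered symplectic partial bases, or a suitable arc/curve complex adapted to the $\SI$-setting. Restricting the action to the congruence subgroup $\Mod(\Sigma_{g,1},p)$ and running the associated equivariant spectral sequence, one obtains an $E^1$-page built from $V_j(g')$ for $g'<g$ and $j\leq i$, with $\Sp_{2g}(\Z/p\Z)$-twists encoding the orbit combinatorics. Comparison with $H_*(\Mod(\Sigma_{g,1}); R)$---whose classical homological stability is well understood---bounds the abutment of the spectral sequence, so that the paper's $\SI$-module syzygy vanishing theorem can then be applied to deduce bounds on the generation and presentation degrees of $V_i(g)$.

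The induction now runs on $i$. The base case $i=0$ is trivial, since $H_0$ is one-dimensional. For $i=1$ a direct computation, e.g.\ via the Hopf formula applied to the exact sequence above, yields generation degree $\leq 5$. For $i\geq 2$, the syzygy vanishing theorem bounds the generation degree of $V_i(g)$ by a linear function of the presentation degree of the $V_{j}(g)$ for $j<i$, and in turn bounds the presentation degree of each $V_j(g)$ by a linear function of the generation degree one homological step further. Iterating this two-step linear recurrence produces the explicit bound $(8)3^{2i-3}$, with the constants $8$ and $3$ inherited from the numerical coefficients in the syzygy vanishing statement.

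I expect the main obstacle to be the construction (or identification) of the right geometric complex: it must be connected through degree growing at least linearly in $g$, have simplex stabilizers that are themselves level-$p$ congruence mapping class groups of subsurfaces (not merely the ambient mapping class groups), and have an orbit structure compatible with the $\SI$-module framework. Adapting Hatcher--Wahl style arc complex connectivity arguments to carry the level-$p$ data through the spectral sequence should be the most delicate step, and the sharpness of the constants in the stable range rests on having optimal connectivity bounds for this complex.
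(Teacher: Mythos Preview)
Your outline is essentially the paper's strategy: induct on $i$ through a spectral sequence whose $E^2$ page is built from the lower $H_b(\Mod(\Sigma,p);R)$, and feed presentation bounds through the $\SI$-syzygy theorem to climb in homological degree. Two places where your plan departs from, or is vaguer than, the actual argument deserve comment.

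First, the geometric complex you identify as the main obstacle is not constructed here at all: the spectral sequence, with $E^2_{a,b}(g)\cong \HH_a(H_b(\Mod(\Sigma,p);R))_g$ and $E^\infty_{a,b}(g)=0$ for $a+b\leq (g-3)/2$, is quoted as a black box from Putman--Sam. The intermediary $\HH_*$ is \emph{central stability homology}, and the bridge you do not name---Patzt's theorem---is what actually closes the inductive loop: vanishing of $\HH_{-1}(\cA)_n$ for $n>d_0$ and of $\HH_0(\cA)_n$ for $n>d_1$ (with $d_1-d_0\geq 3$) is equivalent to a two-step resolution by induced $\SI$-modules in those degrees, and conversely a full induced resolution with $\cM^k$ generated in degree $\leq d_k$ forces $\HH_k(\cA)_n=0$ for $n>d_{k+1}$. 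Without this conversion between $\HH_*$-vanishing and induced resolutions, the spectral-sequence output and the syzygy theorem do not talk to each other. Second, the $i=1$ case is \emph{not} handled by a Hopf-formula computation; it is run through the same spectral sequence. From $\HH_a(\cM^{\SI}(0))_g=0$ for $g>2a+3$ one reads off $\HH_{-1}(H_1)_g=0$ for $g>5$ and $\HH_0(H_1)_g=0$ for $g>8$, and Patzt's theorem then gives generation degree $\leq 5$ and relation degree $\leq 8$. A direct Hopf-formula approach would require independent $\SI$-module control of the abelianization of $\Mod(\Sigma_{g,1},p)$, which you do not have. The relation bound $8$ at $i=1$ seeds the recurrence; one pass through the syzygy theorem (each syzygy step multiplies by $3$) followed by one pass back through Patzt per homological degree produces the exponent $2i-3$.
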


See \autoref{detailedModp} for a version of this theorem which addresses both generation and relation degree. Putman--Sam \cite[Theorem K]{PS} proved that the degree of generation is finite when $R$ is any Noetherian ring, and  \autoref{TheoremMod} quantifies their result when $R$ is a field of characteristic zero. 

A similar story is also true for automorphism groups of free groups. Let $F_n$ denote the free group on $n$ letters. The induced action of $\Aut(F_n)$ on the abelianization $\Z^n$ of $F_n$ gives a surjective map \[\Aut(F_n) \m \GL_n(\Z).\] Reduction mod $p$ gives a surjective map \[\Aut(F_n) \m \GL_n^{\pm}(\Z/p\Z)\] to the subgroup $\GL_n^{\pm}(\Z/p\Z) \subseteq \GL_n(\Z/p\Z)$  of matrices with determinant $\pm 1$. We refer to the kernel of this map as \emph{the level $p$ congruence subgroup of $\Aut(F_n)$} and denote it by $\Aut(F_n,p)$. The natural inclusion $F_n \hookrightarrow F_{n+1}$ gives a $\GL_n^{\pm}(\Z/p\Z)$--equivariant map \[ H_i(\Aut(F_n,p);R) \m H_i(\Aut(F_{n+1},p);R).\] We prove the following theorem. 

\begin{atheorem} \label{thmB} Let $p$ be a prime and $R$ a field of characteristic zero. The sequence $\{H_i(\Aut(F_n,p);R) \}$ has generation degree $\leq \begin{cases}
0 & \text{for } i=0 \\
4 & \text{for } i=1 \\
\left(\frac{13}{2}\right)3^{2i-3} - \frac12 & \text{for } i>1. 
\end{cases}$
\end{atheorem}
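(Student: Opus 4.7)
The plan is to realize \autoref{thmB} as a direct application, in parallel with \autoref{TheoremMod}, of the syzygy vanishing theorem for $\VIC$-modules that constitutes the main technical contribution of the paper. First I would equip the sequence $\{H_i(\Aut(F_n, p); R)\}_n$ with the structure of a $\VIC$-module over $R$. The natural inclusions $F_n \hookrightarrow F_{n+1}$ restrict to maps $\Aut(F_n, p) \hookrightarrow \Aut(F_{n+1}, p)$ and furnish the stabilization maps on homology; the $\GL^\pm_n(\Z/p\Z)$-action is functorial; and the $\VIC$-specific morphisms that are not present in $\FI$ arise from the freedom to choose a complement to the embedded rank-$n$ summand in $\Z^{n+1}$.

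With this structure in place, I would proceed by induction on $i$. For the inductive step, I would use the spectral sequence arising from the action of $\Aut(F_n, p)$ on a Hatcher--Vogtmann-style complex of (split) partial bases of $F_n$, whose connectivity grows linearly in $n$. The stabilizer of a $k$-simplex contains a factor isomorphic to $\Aut(F_{n-k-1}, p)$, so the entries of the $E^1$-page are built from the homology groups $H_j(\Aut(F_\ell, p); R)$ with $\ell < n$ and $j < i$. This yields bounds on both the generation degree and relation degree of the $\VIC$-module $\{H_i(\Aut(F_n,p);R)\}_n$ in terms of the inductively-established generation degrees of $\{H_j\}$ for $j < i$. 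Applying the $\VIC$-syzygy vanishing theorem then converts this presentation-degree bound into the claimed bound on the generation degree of $\{H_i\}$.

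The main obstacle is tracking constants carefully enough to produce the explicit closed-form bound. The structure of the syzygy vanishing theorem should yield a recursion roughly of the form $f(i) \leq 9 f(i-1) + c$ — the factor of $9 = 3^2$ reflecting both the gap between generation and relation degrees and the transition between consecutive syzygy degrees — whose closed-form solution matches $\left(\tfrac{13}{2}\right)3^{2i-3} - \tfrac12$. The base cases are handled separately: $H_0(\Aut(F_n, p); R) = R$ is trivially generated in degree $0$, while the bound of $4$ for $H_1$ requires a direct analysis of the abelianization of $\Aut(F_n, p)$, paralleling known results on the $\IA_n$ generators. The constant $4$ here (versus $5$ in \autoref{TheoremMod}) reflects the fact that $\GL$-representations tend to stabilize one step earlier than $\Sp$-representations, and this one-step savings then propagates through the inductive machinery into the slightly smaller leading coefficient $\tfrac{13}{2}$ (versus $8$) in the main bound.
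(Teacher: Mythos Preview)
Your overall framework is right—the $\VIC^{\pm}(\Z/p\Z)$--module structure, the Putman--Sam spectral sequence built from a complex of partial bases, and induction on $i$—but you have the role of the syzygy theorem inverted, and this prevents the argument from closing.

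The spectral sequence has $E^2_{a,b}(n) \cong \HH_a(H_b(\Aut(F,p);R))_n$. To bound $\HH_{-1}(H_i)_n$ and $\HH_0(H_i)_n$ you must kill all incoming differentials $d_r$, whose sources are $E^2_{r-1,\,i-r+1}$ and $E^2_{r,\,i-r+1}$, i.e.\ \emph{higher} central stability homology groups $\HH_k(H_j)$ for $j<i$ and $k$ ranging up to roughly $i$. Knowing only the generation degrees of the $H_j$ does not control these; you need full resolutions. So the correct flow is: carry inductively bounds on \emph{both} generation and relation degree of each $H_j$; apply the syzygy theorem (\autoref{regVIC}, equivalently \autoref{ThmBoundedResolutionVICU}) to each $H_j$ with $j<i$ to bound all of its higher syzygies; translate via Patzt's \autoref{ThmPatztBoundedResolution} to vanishing ranges for $\HH_k(H_j)_n$; run the spectral sequence to force $\HH_{-1}(H_i)_n$ and $\HH_0(H_i)_n$ to vanish in the claimed ranges; and finally translate back via Patzt to the generation and relation degrees of $H_i$. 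In your outline the syzygy theorem is invoked \emph{after} the spectral sequence, acting on $H_i$ itself, to ``convert a presentation-degree bound into a generation-degree bound.'' But the theorem goes the other way—it manufactures higher-syzygy bounds from a presentation—so as you have placed it, it does nothing.

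Separately, the $i=1$ base case does not require, and the paper does not use, any direct computation of the abelianization of $\Aut(F_n,p)$. It comes from exactly the same spectral-sequence mechanism, using only the $b=0$ row: since $H_0(\Aut(F,p);R)\cong\cM(0)$, \autoref{PropM(0)} gives $\HH_k(\cM(0))_n=0$ for $n>2k+2$, and this together with convergence forces $\HH_{-1}(H_1)_n=0$ for $n>4$ and $\HH_0(H_1)_n=0$ for $n>6$. The $4$ here (versus $5$ in \autoref{TheoremMod}) is precisely the $\VIC$-versus-$\SI$ difference in \autoref{PropM(0)}, not anything about $\IA_n$-type generators.
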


See \autoref{detailedAutp} for a version of this theorem which also addresses relation degree. As before, Putman--Sam \cite[Theorem I]{PS} proved that the degree of generation is finite when $R$ is any Noetherian ring. In \autoref{remark3manifold}, we discuss a generalization of \autoref{thmB} which applies to congruence subgroups of automorphism groups of free products of certain fundamental groups of $3$-manifolds ($\Z$, $\Z/2\Z$, $\Z/4\Z$, $\Z/6\Z$, $\pi_1(\Sigma_g)$ etc.). The techniques of Putman--Sam do not apply in this more general context as it is not currently known if the underlying vector spaces are finite dimensional. 

\subsection{Bounding higher syzygies}

To state our main technical tool and to state our homological stability with twisted coefficients theorems, we need the following categories first introduced by Putman--Sam. 

We write $\SI(\bk)$ to denote the category whose objects are finite-rank free symplectic $\bk$--modules and whose morphisms are symplectic embeddings. 

Given a category $\Cat$ and a commutative ring $R$, the term $\Cat$--\emph{module over} $R$ will mean a functor from $\Cat$ to the category of $R$--modules. We denote the category of $\Cat$--modules over $R$ by $\Cat \text{--}\Mod_R$. Given a $\Cat$--module $\cA$ and an object $V$, let $\cA_V$ denote the functor $\cA$ evaluated on $V$. In the case $\Cat=\SI(\bk)$, we write $\cA_n$ to denote $\cA_{\bk^{2n}}$, where we equip $\bk^{2n}$ with a standard sympletic form; see \autoref{SectionCModules}. 


Since the automorphism group of $\bk^{2n}$ in $\SI(\bk)$ is $\Sp_{2n}(\bk)$, the $R$--module $\cA_n$ is naturally a $R[\Sp_{2n}(\bk)]$--module. Symplectic inclusions $\bk^{2n} \hookrightarrow \bk^{2n+2}$ give $R[\Sp_{2n}(\bk)]$--equivariant maps $\cA_n \m \cA_{n+1}$. Thus generation degree is well-defined for $\SI(\bk)$--modules. We will use these constructions to study the homology groups $H_i(\Mod(\Sigma_{g,1},p);R)$, which assemble to form an $\SI(\Z/p\Z)$--module over $R$. 



For $\cA$ an $\SI(\bk)$--module, let \[H_0^{\SI}: \SI(\bk) \text{--}\Mod_R \m \SI(\bk) \text{--} \Mod_R\] be given by the formula
  \[H_0^{\SI}(\cA)_V=\mathrm{coker} \left( \bigoplus_{W \subsetneq V} \cA_W \m \cA_V  \right) \] 
  and let $H_i^{\SI}$ denote the $i$th left derived functor of $H_0^{\SI}$. Details are given in \autoref{SectionCModuleHomology}. 

Vanishing of $H_0^{\SI}$(M) controls the generation degree of $M$ and vanishing of both $H_0^{\SI}$ and $H_1^{\SI}$ control the presentation degree of $M$ (see \autoref{defPresGen} and \autoref{presentLemma}).  Our main technical tool concerning $\SI(\bk)$--modules is the following theorem. 


\begin{atheorem} \label{regSI} Let $\bk$ be a finite field and $R$ a field of characteristic zero. Let $\cA$ be an $\SI(\bk)$--module over $R$ with $H_0^{\SI}(\cA)_n =0 $ for $n > d$ and $H_1^{\SI}(\cA)_n =0 $ for $n > r$. Then for $i \geq 2$ the group $H_i^{\SI}(\cA)_n$ vanishes for $n > 3^{i-1}\max(r,d)$.
\end{atheorem}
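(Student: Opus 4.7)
The plan is to proceed by induction on the homological degree $i \geq 2$. The base case $i = 2$ and the inductive step $i \to i+1$ both rest on the same key mechanism: comparison of a bar-type complex computing $H_*^{\SI}$ with a geometrically defined complex whose terms have better-controlled higher derived functors.

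First, I would fix the computation of the derived functors. The functor $H_0^{\SI}$ is a colimit-type construction, so $H_i^{\SI}(\cA)_V$ can be computed as the homology of a bar-type complex $B_\bullet(\cA)_V$ whose $p$-simplices are direct sums $\bigoplus \cA_{V_0}$ indexed by chains $V_0 \subsetneq V_1 \subsetneq \cdots \subsetneq V_p = V$ of proper symplectic summands. This is the standard interpretation and should follow from the machinery set up in the earlier sections on $\C$-module homology.

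Next, I would introduce, for each symplectic $\bk$-module $V$ of rank $2n$, a semi-simplicial set $\cS_\bullet(V)$ built from symplectic data of $V$ (the natural candidates are the complex of isotropic partial bases, the complex of hyperbolic splittings, or a symplectic analog of the complex of injective words). The crucial geometric input is a Cohen--Macaulay connectivity estimate of the form: $\cS_\bullet(V)$ and its relevant links of $k$-simplices are at least $\lfloor (n - c - k)/3 \rfloor$-connected for some uniform constant $c$. The base $3$ in the final bound comes directly from the denominator in this connectivity estimate, so identifying a complex with this precise Cohen--Macaulay constant is the main geometric task.

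Given this connectivity, I would construct an auxiliary double complex combining $B_\bullet(\cA)$ with a chain complex built from $\cS_\bullet$, and analyze the two associated spectral sequences. One spectral sequence has $E^1$-page controlled by the connectivity of $\cS_\bullet$ (so its entries vanish in a range dictated by the $\lfloor\cdot/3\rfloor$-connectivity). The other spectral sequence has entries built from $H_j^{\SI}(\cA)_W$ for smaller symplectic summands $W \subsetneq V$ with $j < i$. Applying the inductive hypothesis to the second spectral sequence, one concludes that $H_i^{\SI}(\cA)_V$ vanishes as soon as $n > 3 \cdot 3^{i-2}\max(r,d) = 3^{i-1}\max(r,d)$, giving the inductive step.

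The main obstacle is Step~2: establishing the Cohen--Macaulay connectivity of $\cS_\bullet(V)$ with the specific linear rate that yields the factor $3$, rather than $2$ or a weaker constant. This typically requires a careful bad-simplex / link argument over a finite field $\bk$, and the arithmetic of symplectic complements (where one has to account for both isotropic directions and hyperbolic pairings) is what forces the denominator $3$. A secondary difficulty is bookkeeping through the spectral sequence: verifying that the bound is sharp at $3^{i-1}\max(r,d)$ and not off by additive constants, and checking the base case $i=2$ directly against the hypotheses on $H_0^{\SI}$ and $H_1^{\SI}$, where the induction has no prior vanishing to consume.
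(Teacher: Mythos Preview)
Your approach is genuinely different from the paper's, and the main gap is precisely the step you flag: you posit a symplectic complex $\cS_\bullet(V)$ with Cohen--Macaulay connectivity of slope $\tfrac13$, but you do not identify one, and the standard symplectic complexes over fields (isotropic partial bases, hyperbolic pairs, the Mirzaii--van der Kallen complexes) all have connectivity with denominator $2$, not $3$. Without such a complex your spectral sequence has no input and the induction cannot begin. The bar-complex description of $H_*^{\SI}$ in Step~1 is fine, but it does not by itself produce any vanishing.

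More fundamentally, you have misdiagnosed the source of the factor $3$. In the paper it is representation-theoretic, not geometric. From the hypotheses one first deduces that $\cA$ is generated in degree $\leq d$ and related in degree $\leq \max(r,d)$. One then constructs a resolution $\cdots \to \cM^1 \to \cM^0 \to \cA$ by induced modules inductively. If $\cM^k$ is induced and generated in degree $\leq m$, then $\cM^k$ has \emph{weight} $\leq m$ and, by a double-coset computation of Patzt for the representable modules $\cM(d)$, \emph{stability degree} $\leq 2m$; the kernel $\cK^k \hookrightarrow \cM^k$ inherits both bounds. A Frobenius-reciprocity argument then shows that weight $\leq m$ together with stability degree $\leq 2m$ forces generation degree $\leq m + 2m = 3m$, so one may take $\cM^{k+1}$ generated in degree $\leq 3m$. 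Since induced modules are $H_*^{\SI}$-acyclic, applying $H_0^{\SI}$ to this resolution and reading off supports gives the bound $3^{i-1}\max(r,d)$. Note that the characteristic-zero hypothesis on $R$ is used essentially here (semisimplicity of $R[\Sp_{2n}(\bk)]$ makes coinvariants exact and makes every induced module projective); your outline never invokes it, which is a further indication that the mechanism you are reaching for is not the one that actually drives the result.
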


The above theorem is analogous to the regularity theorem of Church--Ellenberg \cite[Theorem A]{CE} for $\FI$--modules, although our techniques are different. This theorem shows that we can bound the generation degrees of modules of higher syzygies in terms of the degrees of generators and relations. It serves the same purpose in this paper that the Noetherian theorems serve in Putman--Sam \cite{PS}.

For $\U$ a subgroup of the group of units of $\bk$, let $\GL_n^{\U}(\bk)$ denote the subgroup of matrices with determinant in $\U$. Putman--Sam \cite{PS} introduced a category $\VIC^{\U}(\bk)$ whose automorphism groups are $\GL_n^{\U}(\bk)$, defined in \autoref{defVICU}. The groups $H_i(\Aut(F_n,p))$ assemble to form a $\VIC^\pm(\Z/p\Z)$-module. We prove the following result concerning syzygies of  $\VIC^{\U}(\bk)$--modules.


\begin{atheorem} \label{regVIC} Let $\bk$ be a finite field and $R$ a field of characteristic zero. Let $\cA$ be a $\VIC^{\U}(\bk)$--module over $R$ with $H_0^{\VIC^{\U}}(\cA)_n =0 $ for $n > d$ and $H_1^{\VIC^{\U}}(\cA)_n =0 $ for $n > r$. Then for $i \geq 2$ the group $H_i^{\VIC^{\U}}(\cA)_n $ vanishes for $n >  3^{i-1}\left(\max(r,d)+\frac12\right) -\frac12$.
\end{atheorem}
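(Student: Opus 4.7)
The plan is to run essentially the same argument as for \autoref{regSI}, with the category $\SI(\bk)$ replaced throughout by $\VIC^{\U}(\bk)$. Both theorems have the form ``presentation-degree bounds propagate to all higher syzygies at a geometric rate,'' and the only numerical difference is the $+\tfrac12$ shift, which should arise because the natural splitting complex for $\VIC^{\U}(\bk)$ decreases rank in steps of $1$, rather than in steps of $2$ as in the symplectic case.

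First I would set up a partial resolution of $\cA$ by ``semi-induced'' $\VIC^{\U}(\bk)$-modules, built from a semisimplicial complex whose $k$-simplices are ordered partial bases (or ``complementary line splittings'') of length $k+1$ in $\bk^n$, on which $\GL_n^{\U}(\bk)$ acts. The crucial geometric input is a high-connectivity theorem for this complex, of the form ``connected through degrees roughly $\tfrac{n-1}{2}$''; the precise constant here is what produces the $+\tfrac12$ in the statement. With this in hand, the derived-functor spectral sequence of the resolution expresses $H_i^{\VIC^{\U}}(\cA)_n$ in terms of $H_{i-1}^{\VIC^{\U}}(\cA)_m$ for $m$ strictly smaller than $n$.

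Next I would induct on $i$. The cases $i=0,1$ are the hypotheses $d$ and $r$, and the case $i=2$ is the base case, coming directly from the connectivity estimate applied to the spectral sequence. For $i\geq 3$, tracking the vanishing threshold through the same spectral sequence should produce the recursion
\[
B(i) \leq 3\, B(i-1) + 1, \qquad B(1) = \max(r,d),
\]
which unwinds to $B(i) = 3^{i-1}\bigl(\max(r,d) + \tfrac12\bigr) - \tfrac12$, matching the stated bound (the factor $3$ being common to both \autoref{regSI} and the present theorem, and the additive $+1$ per step being where the two proofs diverge).

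The main obstacle is establishing the connectivity estimate for the $\VIC^{\U}(\bk)$ splitting complex with the sharp constant needed to recover the half-integer offset, rather than some coarser bound that would degrade the multiplicative constant. Once this sharp connectivity input is secured, the rest of the argument is a direct adaptation of the $\SI$ case, with the rank-drop in the splitting complex changing from $2$ to $1$ accounting for the additive correction in the recursion.
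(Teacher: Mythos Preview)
Your proposal diverges from the paper's argument in both method and in the diagnosis of where the $+\tfrac12$ comes from, and the latter is a genuine conceptual error.

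You attribute the shift to the rank dropping by $1$ in the $\VIC^{\U}$ splitting complex versus $2$ in the symplectic case. But the same rank-$1$ drop occurs for plain $\VIC(\bk)$ (with $\U=\bk^{\times}$), and there the paper obtains the \emph{same} bound $3^{i-1}\max(r,d)$ as for $\SI$; see \autoref{ThmBoundedResolution}, which covers both $\SI(\bk)$ and $\VIC(\bk)$. The connectivity of the partial-basis complex is identical for $\VIC$ and $\VIC^{\U}$ (the non-isomorphism Hom-sets agree), so no connectivity input can possibly distinguish them. The actual source of the $+\tfrac12$ is the determinant restriction: for $\VIC(\bk)$ and $\SI(\bk)$ the representable module $\cM(d)$ has injectivity degree $\leq 0$ (\autoref{TheoremStabDegVICM(d)}), whereas for $\VIC^{\U}(\bk)$ one only gets injectivity degree $\leq 2d+1$ (\autoref{VICUcoset}), because the smaller automorphism group $\GL_n^{\U}$ yields more double cosets. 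This extra $+1$ in each inductive step is exactly what produces your recursion $B(i)\leq 3B(i-1)+1$.

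The paper does not use a splitting-complex spectral sequence at all for this theorem. Instead it introduces two invariants, \emph{weight} and \emph{stability degree}, and proves (\autoref{StabDegAndWeightBoundGenDeg}) that generation degree is bounded by their sum. One then builds the resolution $\cM^{k}\to\cdots\to\cM^0\to\cA$ inductively: each kernel $\cK^k\hookrightarrow\cM^k$ inherits a weight bound from $\cM^k$, and its surjectivity/injectivity degrees are controlled by those of $\cM^k$ and $\cM^{k-1}$ via \autoref{StabDegKernelsCokernels}. Feeding in the stability-degree bounds for induced $\VIC^{\U}$-modules from \autoref{VICUcoset} yields \autoref{ThmBoundedResolutionVICU}, and applying $H_0^{\Cat}$ to that resolution (using that induced modules are acyclic, \autoref{SharpAcyclic}) together with \autoref{presentLemma} gives the theorem. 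Your spectral-sequence outline, as written, does not locate the step where $\VIC^{\U}$ behaves worse than $\VIC$, so even if the numerics of your recursion happen to be right, the mechanism you describe would not produce it.
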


These theorems imply that when $\bk$ is a finite field and $R$ is a field of characteristic zero, the categories of $\SI(\bk)$-- and $\VIC^{\U}(\bk)$--modules with finite presentation degree are abelian categories; see \autoref{abCat}.


\subsection{Homological stability with twisted coefficients}

Our techniques can also be applied to prove homological stability theorems with twisted coefficients. 

\begin{atheorem} \label{MCGstab} Let $p \in \Z$ be prime. Let $R$ be a field of characteristic zero and let $\cA$ be an $\SI(\Z/p\Z)$--module over $R$ with generation degree $\leq d$ and relation degree $\leq r$. Then an inclusion $\Sigma_{g,1} \hookrightarrow \Sigma_{g+1,1}$ induces an isomorphism \[H_i(\Mod(\Sigma_{g,1});\cA_g) \m H_i(\Mod(\Sigma_{g+1,1});\cA_{g+1}) \] whenever  $$g \geq
\left\{ \begin{array}{ll}
\max(d,r) & \text{for } i=0 \\
\max(9+d+\min(8,d),6+r+\min(5,r), 9+d+\min(5,d)) & \text{for } i=1 \\
\max\big((8)3^{2i-2}+1+d+\min\big((8)3^{2i-2},d\big),(8)3^{2i-3}+1+r+\min\big((8)3^{2i-3},r\big)\big) & \text{for } i>1. \\ 
\end{array} \right.$$
\end{atheorem}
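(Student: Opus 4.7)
The plan is to reduce the twisted stability statement to the untwisted case for mapping class groups by resolving $\cA$ by representable $\SI(\Z/p\Z)$--modules and running a comparison spectral sequence.

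First I would construct a partial free resolution $\cdots \to P_2 \to P_1 \to P_0 \to \cA \to 0$ in which each $P_k = \bigoplus_\alpha M(m_\alpha^k)$ is a direct sum of representable $\SI(\Z/p\Z)$--modules $M(m)_n := R[\Hom_{\SI(\Z/p\Z)}(\bk^{2m}, \bk^{2n})]$. The hypotheses on generation and relation degree let one take $m_\alpha^0 \leq d$ and $m_\alpha^1 \leq r$, and \autoref{regSI} then allows an inductive choice with $m_\alpha^k \leq 3^{k-1}\max(r,d)$ for $k \geq 2$, because the generation degree of each syzygy module is controlled by the vanishing of the next $H_k^{\SI}(\cA)$.

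Applying $H_*(\Mod(\Sigma_{g,1}); -_g)$ termwise gives a first-quadrant hyperhomology spectral sequence
\[
E^1_{p,q}(g) \;=\; H_q(\Mod(\Sigma_{g,1}); (P_p)_g) \;\Longrightarrow\; H_{p+q}(\Mod(\Sigma_{g,1}); \cA_g),
\]
together with a map $E^\bullet(g) \to E^\bullet(g+1)$ induced by $\Sigma_{g,1} \hookrightarrow \Sigma_{g+1,1}$. It suffices to show that this map is an isomorphism on $E^1_{p,q}$ for all $p + q \leq i$ and a surjection for $p + q = i + 1$. For a representable summand $M(m)_g$, Shapiro's lemma identifies $H_q(\Mod(\Sigma_{g,1}); M(m)_g)$ with the group homology of the preimage in $\Mod(\Sigma_{g,1})$ of the pointwise stabilizer in $\Sp_{2g}(\Z/p\Z)$ of a fixed symplectic subspace of dimension $2m$; up to an extension by the congruence kernel and a small boundary correction, this preimage is a mapping class group of a subsurface of genus $g - m$, and a quantitative form of the classical untwisted stability theorem for mapping class groups (Boldsen, Randal-Williams) then supplies an explicit slope $\phi(q)$ beyond which the induced stabilization map is an isomorphism on $H_q$.

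Taking the maximum of $m_\alpha^p + \phi(q) + O(1)$ over $\alpha$ and over $(p,q)$ with $p + q \in \{i, i+1\}$, and substituting the bounds on $m_\alpha^p$ above, yields the stated piecewise expression. The refinements of the form $\min\bigl((8)3^{2i-2},\,d\bigr)$ reflect the fact that the dominant term in the maximum is governed by $d$ when $d$ is small and by the top-degree resolution bound otherwise, and a parallel analysis with $r$ explains the second refinement. The main obstacle is the Shapiro-lemma identification itself: one must show that the preimage of the pointwise stabilizer of a symplectic subspace in $\Mod(\Sigma_{g,1})$ really does reduce, up to a controlled extension, to a lower-genus mapping class group, and then track the constants precisely enough that the quantitative untwisted-stability ranges can be imported to match the exact numerology in the statement.
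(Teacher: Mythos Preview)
Your approach has a genuine gap at the Shapiro step. If $\pi:\Mod(\Sigma_{g,1})\to\Sp_{2g}(\Z/p\Z)$ is the reduction map, Shapiro identifies $H_q(\Mod(\Sigma_{g,1});\cM(m)_g)$ with $H_q(\pi^{-1}(\Sp_{2(g-m)});R)$, but the group $\pi^{-1}(\Sp_{2(g-m)})$ is \emph{not} a lower-genus mapping class group up to any small correction. It is the full group of mapping classes of $\Sigma_{g,1}$ acting trivially mod $p$ on a fixed $2m$-dimensional symplectic summand of $H_1$, and it sits in the extension
\[
1 \longrightarrow \Mod(\Sigma_{g,1},p) \longrightarrow \pi^{-1}(\Sp_{2(g-m)}) \longrightarrow \Sp_{2(g-m)}(\Z/p\Z) \longrightarrow 1.
\]
Computing its homology via Hochschild--Serre requires knowing $H_*(\Mod(\Sigma_{g,1},p);R)$ as a $\Sp_{2(g-m)}(\Z/p\Z)$--module, which is exactly the representation-stability input of \autoref{detailedModp}; so the reduction to untwisted Boldsen/Randal-Williams stability does not go through, and the argument becomes circular.

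Your explanation of the $\min$ terms is also off. They do not record a case split between ``$d$ small'' and ``$d$ large''; they come from a structural fact about tensor products of $\SI$--modules. The paper's proof is short and takes a different route: since $R[\Sp_{2g}(\Z/p\Z)]$ is semisimple, one has
\[
H_i(\Mod(\Sigma_{g,1});\cA_g)\;\cong\;\bigl(H_i(\Mod(\Sigma_{g,1},p);R)\otimes_R\cA_g\bigr)_{\Sp_{2g}(\Z/p\Z)},
\]
so it suffices to bound the generation and relation degree of the $\SI$--module $H_i(\Mod(\Sigma,p);R)\otimes_R\cA$ and then invoke \autoref{BoundingCoinvariants}. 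The bounds on the tensor product (\autoref{ResolvingTensors}, resting on \autoref{tensorOfInduced}) give generation degree $\leq D_i+d+\min(D_i,d)+1$ and a similar expression for relations, where $D_i,R_i$ are the bounds from \autoref{detailedModp}; substituting those produces the stated numerology, including the $\min$ refinements.
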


In particular, the conclusion of \autoref{MCGstab} holds for \\[.5em]
$$g \geq \begin{cases}
d+r & \text{for } i=0 \\
17+d+r & \text{for } i=1 \\
1 + (8)3^{2i-2} +2d+2r & \text{for } i>1. \\[.5em]
\end{cases}$$

Similarly, we prove the following stability theorem for automorphism groups of free groups. 

\begin{atheorem} \label{AutFnStab} Let $p \in \Z$ be prime. Let $R$ be a field of characteristic zero and let $\cA$ be a $\VIC^\pm(\Z/p\Z)$--module over $R$ with generation degree $\leq d$ and relation degree $\leq r$. Then the inclusion $F_n \hookrightarrow F_{n+1}$ induces an isomorphism \[H_i(\Aut(F_n);\cA_n) \m H_i(\Aut(F_{n+1});\cA_{n+1}) \] whenever $n$ is at least  
$$ \begin{cases}
\max(d,r) & \text{for } i=0 \\
\max\Big(6+d+\min(6,d), 4+r+\min(4,r) \Big) & \text{for } i=1 \\
\max\Big((\frac{13}{2})3^{2i-2}-\frac12+d +\min\big((\frac{13}{2})3^{2i-2}-\frac12,d\big),(\frac{13}{2})3^{2i-3}-\frac12+r+\min\big(\frac{13}{2})3^{2i-3}-\frac12,r\big)\Big) & \text{for } i>1. 
\end{cases}$$
\end{atheorem}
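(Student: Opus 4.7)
The plan is to reduce to a coinvariants computation via Hochschild--Serre, assemble the relevant data into a $\VIC^\pm(\Z/p\Z)$--module, and then exploit the quantitative bounds of \autoref{thmB} and \autoref{regVIC}. Apply the Hochschild--Serre spectral sequence to the extension $1 \to \Aut(F_n, p) \to \Aut(F_n) \to \GL_n^\pm(\Z/p\Z) \to 1$ with coefficients in $\cA_n$. Since $\cA_n$ is a $\GL_n^\pm(\Z/p\Z)$--module, $\Aut(F_n, p)$ acts trivially, and universal coefficients gives $E^2_{s,t} = H_s\bigl(\GL_n^\pm(\Z/p\Z);\, H_t(\Aut(F_n, p); R) \otimes_R \cA_n\bigr)$. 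Over a field of characteristic zero and a finite group, higher group homology vanishes, so the spectral sequence collapses, yielding
\[ H_i(\Aut(F_n); \cA_n) \cong \bigl( H_i(\Aut(F_n, p); R) \otimes_R \cA_n \bigr)_{\GL_n^\pm(\Z/p\Z)}. \]

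Set $V_i := H_i(\Aut(F_-, p); R)$, viewed as a $\VIC^\pm(\Z/p\Z)$--module, so the right-hand side above is the $\GL_n^\pm(\Z/p\Z)$--coinvariants of $(V_i \otimes \cA)_n$. The quantitative form of \autoref{thmB} and its relation-degree counterpart \autoref{detailedAutp} bound the generation and relation degrees of $V_i$ by explicit functions of $i$. Combining these with a tensor-product estimate for $\VIC$--modules---ideally one refined using \autoref{regVIC} rather than the crude ``sum of degrees'' bound---gives explicit control of the generation and relation degrees of $V_i \otimes \cA$ in terms of those of $V_i$ and those of $\cA$.

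It then suffices to establish a general stability result: for any $\VIC^\pm(\Z/p\Z)$--module $M$ of generation degree $\leq D$ and relation degree $\leq R$, the canonical map $(M_n)_{\GL_n^\pm(\Z/p\Z)} \to (M_{n+1})_{\GL_{n+1}^\pm(\Z/p\Z)}$ is an isomorphism for $n$ in an explicit range depending on $(D, R)$. I would prove this by presenting $M$ as a cokernel $\bigoplus_i P(b_i) \to \bigoplus_j P(a_j) \to M \to 0$ of principal projective $\VIC^\pm$--modules with $a_j \leq D$ and $b_i \leq R$. Since $(P(m)_n)_{\GL_n^\pm(\Z/p\Z)}$ is a permutation module indexed by the $\GL_n^\pm(\Z/p\Z)$--orbits on $\Hom_{\VIC^\pm}(\bk^m, \bk^n)$, its stability under $n \mapsto n+1$ is transparent once $n > m$; a diagram chase using right-exactness of coinvariants then extends this to $M$.

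The main obstacle is the last step with sharp constants: obtaining the explicit shape $D + d + \min(D, d)$ rather than the cruder $D + d$. Getting the $\min$ term sharply requires exploiting the symmetry of the two factors in the tensor product, most naturally via a double spectral sequence coming from simultaneous resolutions of $V_i$ and $\cA$ by principal projectives, with the degrees of the resolving modules controlled via \autoref{regVIC}. The analogue of this entire argument in the mapping class group setting will give \autoref{MCGstab}, with \autoref{regSI} and \autoref{TheoremMod} playing the roles of \autoref{regVIC} and \autoref{thmB}.
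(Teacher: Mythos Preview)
Your overall architecture matches the paper's exactly: reduce $H_i(\Aut(F_n);\cA_n)$ to the $\GL_n^{\pm}(\Z/p\Z)$--coinvariants of $H_i(\Aut(F,p);R)\otimes_R\cA$ (the paper does this directly via flatness of $R[\G_n]$--modules rather than Hochschild--Serre, but the content is the same), invoke \autoref{detailedAutp} for the generation and relation degrees of $H_i(\Aut(F,p);R)$, bound the generation and relation degrees of the tensor product, and then apply the coinvariants stability result \autoref{BoundingCoinvariants} (which is precisely your ``present $M$ by principal projectives and chase'' argument).

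The gap is exactly where you flag it, and your proposed fix does not close it. The double resolution / total complex argument you sketch is what the paper carries out in \autoref{ResolvingTensors}, but that argument only reduces the problem to bounding the generation and presentation degrees of $\cM(a)\otimes_R\cM(b)$ for representable modules. Neither ``symmetry of the two factors'' nor \autoref{regVIC} gives you that: \autoref{regVIC} bounds higher syzygies \emph{given} generation and relation degrees, it does not compute them for a tensor product. The actual input is \autoref{tensorOfInduced}, which identifies the central stability chains of $\cM(a)\otimes_R\cM(b)$ with the chains on the complexes $\PBC_\bullet(V,C_a\cap C_b,\im T_a+\im T_b)$ and then invokes the connectivity theorem \autoref{PBCVUW} from \cite{MPW}. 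That is where the number $a+b+\min(a,b)$ comes from: $\dim(C_a\cap C_b)\geq n-a-b$ and $\dim\big((\im T_a+\im T_b)\cap C_a\big)\leq\min(a,b)$. This is a genuine geometric input, and by \autoref{RemarkBoundsSharp} it is sharp; in particular $\cM(a)\otimes_R\cM(b)$ is \emph{not} an induced $\VIC(\bk)$--module when $a,b>0$, so no purely formal argument with resolutions will produce the bound.
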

In particular, the conclusion of \autoref{AutFnStab}  holds for 
$$n \geq \begin{cases}
d+r & \text{for } i=0 \\
12+d+r & \text{for } i=1 \\
(\frac{13}{2})3^{2i-2}-\frac12  +2d+2r & \text{for } i>1. \\[.5em]
\end{cases}$$

These twisted stability theorems are qualitatively different than stability theorems with polynomial coefficients, for example, the coefficients considered in \cite{RWW}. See the discussion before Theorem L in \cite{PS} or Example 1.4 of \cite{MPW} for an exposition of this difference. In fact, the work of Gan--Watterlond \cite{GW} implies that there are no non-constant polynomial coefficient systems in our context.


\subsection{Outline}
In \autoref{AlgebraicResults}, we construct bounded resolutions of $\SI(\bk)$ and $\VIC^{\U}(\bk)$--modules. We use these resolutions in \autoref{Representationstabilityresults} where we prove representation stability for congruence subgroups of mapping class groups and automorphism groups of free groups. We use these representation stability results in \autoref{TwistedStabilityResults} to prove twisted homoloigcal stability theorems for mapping class groups and automorphism groups of free groups.

\subsection{Acknowledgments}

We would like to thank Daniel Bump, Thomas Church, Benson Farb, Rohit Nagpal, Peter Patzt, Andrew Putman, Eric Ramos, and the referee for helpful comments. We thank Rohit Nagpal in particular for identifying an error in an earlier version of a result on the pointwise tensor product of $\VIC(\bk)$--modules.

\section{Algebraic results}
\label{AlgebraicResults}

In this section, we bound the generation degrees of the modules of higher syzygies of $\SI(\bk)$ and $\VIC^{\U}(\bk)$--modules over $R$ that have finite presentation degree. Our main theorems require that $R$ be a field of characteristic zero and that $\bk$ be a finite field. However, many of our intermediate results apply in more generality. 

\subsection{$\Cat$--modules}  \label{SectionCModules}

We begin by defining the categories of interest. All rings are assumed to have unit.

\begin{definition} \label{DefnLinearCategories} Let $R$ and $\bk$ be commutative  rings. Let $\VIC(\bk)$ be the category whose objects are finite-rank free $\bk$--modules, and whose morphisms $U \to V$ are defined to be the set
$$ \Hom_{\VIC}(U, V) = \left\{ \quad f=(T,C) \quad \middle| \quad \begin{array}{l} \text{$T: U \to V$ an injective linear map, } \\  \text{$C$ a specified direct complement of $T(U)$ in $V$.} \end{array} \right\}$$ 
Composition of morphisms is defined by the rule $$ (T, C) \circ (S, D) = (T \circ S, C \oplus f(D)).$$ 
 Similarly $\SI(\bk)$ denotes the category of finite-rank free symplectic $\bk$--modules and injective, isometric embeddings. 
 \end{definition}  
We note that the image of a symplectic embedding $f: V \to W$ has a unique symplectic complement $f(V)^{\perp} \subset W$. 
 
We will use the following generalization of $\VIC$, defined by Putman--Sam \cite[Section 1.2]{PS}.

\begin{definition} Fix a commutative ring $\bk$ and a subgroup $\U \subseteq \bk^{\times}$.  Let 
\[ \GL^{\U}_n(\bk) = \{ A \in \GL_n(\bk) \mid \det A \in \U\}.\]
We write $\VIC^{\U}(\bk)$ to denote the following category. Its objects are finite-rank free $\bk$--modules $V$ such that nonzero objects are assigned a \emph{$\U$--orientation},  a generator of $$\bigwedge^{\text{rank}_{\bk}(V)} V \cong \bk$$ defined up to multiplication by $\U$.   If $V$ and $W$ have the same rank, then $\Hom_{\VIC^{\U}(\bk)}(V,W)$ is the set of linear isomorphisms that respect the designated $\U$--orientations.  If $V$ has strictly smaller rank than $W$, then a morphism $V \to W$ is a complemented injective linear map $f=(T,C)$, for which we assign to $C$ the unique $\U$--orientation such that $$T(V) \oplus C \cong W \qquad \text{as oriented $\bk$--modules.}$$ Here $T(V)$ is equipped with the orientation induced by the $\U$--orientation on $V$.
\label{defVICU}
 \end{definition}
In particular,  
$$\End_{\VIC^{\U}(\bk)}(\bk^n) \cong \GL_n^{\U}(\bk), $$
but if $V$ has strictly smaller rank than $W$, then 
$$\Hom_{\VIC^{\U}(\bk)}(V,W) \cong  \Hom_{\VIC(\bk)}(V,W). $$
 When $\U = \{1, -1\}$, we write $\VIC^{\pm}(\bk)$ for $\VIC^{\U}(\bk)$. Note that when $\U=\bk^{\times}$, the category $\VIC^{\U}$ is isomorphic to $\VIC$. 
 
   For convenience, we will often work with a skeleton of the category $\VIC(\bk)$ or $\VIC^{\U}(\bk)$, the full subcategory with objects $\bk^d$, $d\geq 0$. Given these choices of bases for our objects, when convenient we can represent our morphisms $(T,C): \bk^d \to \bk^n$ by an equivalence class of $(n \times n)$ matrices in $\GL^{\U}_n(\bk)$ where the first $d$ columns are the matrix representative for $T$, and the final $(n-d)$ columns span $C$.   Similarly, we may choose a skeleton of $\SI(\bk)$ of symplectic vector spaces $\bk^{2d}$  with symplectic form 
$$\Omega_d = \begin{bmatrix} 0 & 1 & && \\ -1 & 0 & &0&  \\ & & \ddots \\ & 0& & 0 & 1 \\ & & & -1 & 0 \end{bmatrix}.$$
A morphism $\bk^{2d} \to \bk^{2n}$  is given by a $(2n \times 2d)$ matrix $A$ that satifies $A^T \Omega_n A  = \Omega_d$. 



\begin{remark} \label{RemarkStabilizer} Consider the action of $\End_{\VIC^{\U}(\bk)}(\bk^n) \cong \GL^{\U}_n(\bk)$ on the morphisms $\Hom_{\VIC^{\U}(\bk)}(\bk^d, \bk^n)$. A morphism $f=(T,C)$ has stabilizer $\GL^{\U}(C) \cong \GL^{\U}_{n-d}(\bk)$ in $\GL^{\U}_n(\bk)$. Similarly, a morphism $f \in \Hom_{\SI(\bk)}(\bk^{2d}, \bk^{2n})$ has stabilizer $\Sp( f(\bk^{2d})^{\perp}) \cong \Sp_{2n-2d}(\bk)$ in $\Sp_{2n}(\bk)$, where again $ f(\bk^{2d})^{\perp}$ denotes the symplectic complement of $f(\bk^{2d}) \subseteq \bk^{2n}$. 
\end{remark}

Throughout the paper we will let $\Cat$ generically refer to the category $\SI(\bk)$ or $\VIC^{\U}(\bk)$, and denote the endomorphisms $\End_{\VIC^{\U}(\bk)}(\bk^n) \cong \GL^{\U}_n(\bk)$ or $\End_{\SI(\bk)}(\bk^{2n}) \cong \Sp_{2n}(\bk)$ generically  by $\G_n$. We stress that for the category $\SI(\bk)$, these indices $n$ are half the rank of the corresponding symplectic $\bk$--module $\bk^{2n}$.

\begin{definition} We write $\CB$ to denote the subcategory of $\Cat$ with the same objects as $\Cat$, whose morphisms are all isomorphisms of $\Cat$. A $\CB$--module $W$ is therefore a sequence $W=\{W_n\}$ of $\G_n$--representations, and we define the \emph{support} of a $\CB$--module to be the set $\{ n \in \Z_{\geq 0} \; | \; W_n \neq 0 \}$. 
\end{definition}


\begin{definition} Let $\cM(d)$ denote the representable $\VIC^{\U}(\bk)$--module 
$$ \bk^n \longmapsto R\left[ \Hom_{\VIC^{\U}(\bk)}(\bk^d, \bk^n) \right] $$
or the representable $\SI(\bk)$--module  
$$ \bk^{2n} \longmapsto R\left[ \Hom_{\SI(\bk)}(\bk^{2d}, \bk^{2n}) \right].$$
\end{definition}
In both cases such a morphism has stabilizer $\G_{n-d}$ by  \autoref{RemarkStabilizer}, and so there are isomorphisms of $\G_n$--representations  $$\cM(d)_n \; \cong\; R \left[ \G_n/\G_{n-d} \right]  \; \cong\; \Ind_{ \G_{n-d}}^{\G_n} R \; \cong\; \Ind_{\G_d \times \G_{n-d}}^{\G_n} R[\G_d] \boxtimes R  $$  where $R$ denotes the trivial $\G_{n-d}$--representation. 

We sometimes write $\cM^{\VIC}(d)$, $\cM^{\VIC^{\U}}(d)$ or $\cM^{\SI}(d)$ for $\cM(d)$ when we wish to specialize to a particular category $\Cat=\VIC(\bk)$, $\Cat=\VIC^{\U}(\bk)$, or $\Cat=\SI(\bk)$.


\begin{definition} \label{DefMLeftAdjoint}   We define the functor 
\begin{align*} 
\cM: \text{$\CB$--Mod} &\longrightarrow  \text{$\Cat$--Mod} 
\end{align*}
to be the left adjoint to the forgetful functor
\begin{align*} \cF: \text{$\Cat$--Mod} &\longrightarrow  \text{$\CB$--Mod}  \\
\cA &\longmapsto  \{\cA_n \}
\end{align*}

Concretely, given a $\G_d$--representation $W$ (viewed as a $\CB$--module supported in degree $d$), the $\Cat$--module $\cM(W)$ satisfies
 $$\cM(W) = \cM(d) \otimes_{R [\G_d]} W. $$ 
As a $\G_n$--representation, 
 $$\cM(W)_n \cong \left\{ \begin{array}{ll} 0 & n<d \\
 \Ind_{\G_d \times \G_{n-d}}^{\G_n} W \boxtimes R & n \geq d. \end{array}\right.$$
Given a general $\CB$--module $W= \{ W_n  \}$, the $\Cat$--module $\cM(W)$ is given by the formula
\begin{align*} 
\cM: \text{$\CB$--Mod} &\longrightarrow  \text{$\Cat$--Mod}\\
 \{ W_n  \} &\longmapsto \bigoplus_{m \geq 0} \cM(W_m)
\end{align*}
These formulas follow as in Church--Ellenberg--Farb \cite[Definition 2.2.2 and Equation (4)]{CEF}. 
Following the terminology of Nagpal--Sam--Snowden \cite{NSS}, we call $\Cat$--modules of this form \emph{induced} $\Cat$--modules.  Again we sometimes write $\cM^{\VIC}(W)$, $\cM^{\VIC^{\U}}(W)$, or $\cM^{\SI}(W)$ for $\cM(W)$ when $\Cat=\VIC(\bk)$, $\VIC^{\U}(\bk)$,  or $\SI(\bk)$. \end{definition}

\begin{proposition} \label{CorMWProjective} For any projective $\CB$--module $W$, the $\Cat$--module $\cM(W)$ is projective. In particular, if $\bk$ is a finite commutative ring and $R$ a field of characteristic zero, then $\cM(W)$ is projective for all $\CB$--modules $W$.  
\end{proposition}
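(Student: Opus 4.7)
The plan is to prove this via the adjunction $(\cM, \cF)$. The forgetful functor $\cF\colon \Cat\text{--Mod} \to \CB\text{--Mod}$ sends a functor $\cA$ to its underlying sequence $\{\cA_n\}$; since kernels and cokernels in functor categories are computed pointwise, $\cF$ is exact. Standard adjoint-functor formalism then gives that the left adjoint $\cM$ preserves projectives: for any projective $\CB$--module $W$ and any short exact sequence $0 \to \cA' \to \cA \to \cA'' \to 0$ in $\Cat$--Mod, applying $\Hom_{\Cat}(\cM(W), -)$ gives the same result as applying $\Hom_{\CB}(W, \cF(-))$, which is exact because $\cF$ is exact and $W$ is projective. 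Hence $\cM(W)$ is a projective $\Cat$--module. This proves the first sentence.

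For the second sentence, I would argue that \emph{every} $\CB$--module is projective under the stated hypotheses. A $\CB$--module is by definition a sequence $\{W_n\}$ where each $W_n$ is an $R[\G_n]$--module, so the category $\CB\text{--Mod}$ decomposes as a product $\prod_{n\geq 0} R[\G_n]\text{--Mod}$, and projectivity can be checked one $n$ at a time. When $\bk$ is a finite commutative ring, the group $\G_n$ (either $\GL^{\U}_n(\bk)$ or $\Sp_{2n}(\bk)$) is finite, and when $R$ is a field of characteristic zero Maschke's theorem guarantees that $R[\G_n]$ is semisimple. Consequently every $R[\G_n]$--module is projective, so every $\CB$--module is projective. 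Combining this with the first sentence yields the second assertion.

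I don't anticipate any real obstacle here: the only point that deserves a line of care is making sure $\cF$ is genuinely exact (which is immediate once one notes that it is the product of the evaluation functors $\cA \mapsto \cA_n$, each of which is clearly exact), and that Maschke's theorem applies, for which one must check that $|\G_n|$ is finite and invertible in $R$ — both clear from the hypotheses. The argument does not need any input specific to $\VIC^{\U}(\bk)$ versus $\SI(\bk)$ beyond the fact that their automorphism groups are finite when $\bk$ is finite.
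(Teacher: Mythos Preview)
Your proof is correct and follows essentially the same approach as the paper: the paper also argues that $\cM$, being left adjoint to the exact forgetful functor, preserves projectives, and that when the algebras $R[\G_n]$ are semisimple every $\CB$--module is projective. You have simply written out these two observations in more detail.
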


\begin{proof} Since $\cM$ is the left adjoint of the exact forgetful functor, it preserves projectives; see Weibel \cite[Proposition 2.3.10]{Weibel}. When the algebras $R[\G_n]$ are semi-simple then all $\CB$--modules are projective. 
\end{proof}

From the formula for $\cM(W)$, and because induction of group representations is an exact operation, we deduce the following. 

\begin{proposition} \label{MExact} The functor $\cM: \CB\text{--Mod} \to \Cat\text{--Mod}$ is exact. 
\end{proposition}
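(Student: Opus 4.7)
The plan is to verify exactness pointwise, leveraging the explicit formula for $\cM(W)_n$ established in \autoref{DefMLeftAdjoint}. A sequence of $\CB$-modules (resp.\ $\Cat$-modules) is exact if and only if it is exact after evaluation at each object, since limits and colimits of functors into $R$-modules are computed objectwise. Thus $\cM$ is exact if and only if, for every fixed $n$, the composite $W \mapsto \cM(W)_n$ is an exact functor from $\CB$-modules to $R[\G_n]$-modules.

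Given the formula
\[ \cM(W)_n \;\cong\; \bigoplus_{m=0}^{n} \Ind_{\G_m \times \G_{n-m}}^{\G_n} \bigl( W_m \boxtimes R \bigr), \]
exactness of this composite reduces to the exactness of three operations: (i) projection onto the degree-$m$ component $W \mapsto W_m$, which is tautologically exact; (ii) the external tensor $W_m \mapsto W_m \boxtimes R$ where $R$ denotes the trivial representation of $\G_{n-m}$, which is exact because it coincides with inflation along the quotient $\G_m \times \G_{n-m} \twoheadrightarrow \G_m$; and (iii) induction $\Ind_{\G_m \times \G_{n-m}}^{\G_n}$, which is exact because $R[\G_n]$ is free as a right $R[\G_m \times \G_{n-m}]$-module, with basis given by any set of coset representatives, so the induction functor is tensoring with a flat bimodule. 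Finite direct sums over $0 \leq m \leq n$ are then exact as well.

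Since $\cM$ is already known to be right exact as the left adjoint of the forgetful functor (\autoref{DefMLeftAdjoint}), the real content of the proposition is left exactness; by the reduction above this is precisely the exactness of induction of group representations, so no serious obstacle is expected. The argument is essentially formal and does not require any hypothesis on $\bk$ or on $R$.
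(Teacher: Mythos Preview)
Your proposal is correct and follows essentially the same approach as the paper: both arguments invoke the explicit formula for $\cM(W)_n$ from \autoref{DefMLeftAdjoint} and reduce to the exactness of induction of group representations. You have simply spelled out in detail what the paper compresses into a single sentence.
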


We now introduce the concepts of generation, relation, and presentation degree. 

\begin{definition}  \label{defPresGen}
A $\Cat$--module $\cA$ is \emph{generated in degree  $\leq d$} if $\cA$ can be expressed as a quotient of a $\Cat$--module of the form 
$$ \cM(W) \twoheadrightarrow \cA$$ 
for some $\CB$--module $W$ supported in degrees $\leq d$.  We say that $\cA$ is \emph{related in degree $\leq r$} if $\cA$ can be expressed as a quotient as above whose kernel is generated in degree $\leq r$. If $\cA$ is generated in degree $\leq d$ and related in degree $\leq r$, we say it has \emph{presentation degree $\leq \max(d,r)$.} 
\end{definition}

\begin{proposition} \label{RemarkIndSurjects} Let $\cA$ be a $\Cat$--module. The following statements are all equivalent to the condition that $\cA$ is generated in degree $\leq d$. 
\begin{enumerate}[label=(\alph*)]
\item \label{a} $\cA$ is a quotient of an induced $\Cat$--module
$ \cM(W) \twoheadrightarrow \cA$ 
with $W$ supported in degrees $\leq d$. 
\item \label{b} For all $n \geq d$, the $\G_{n+1}$--representation $\cA_{n+1}$ is generated by the image of $\cA_n$ in $\cA_{n+1}$ under any map induced by a $\Cat$ morphism.
\item \label{c} For all $n \geq d$, the $\Cat$ morphisms induce surjections $\Ind_{\G_n}^{\G_{n+1}} \cA_{n}  \twoheadrightarrow \cA_{n+1}$. 
\item \label{d} The subset $\{ \cA_n \}_{n=0}^d$ of $\cA$ is not contained in any proper $\Cat$--submodule of $\cA$.
\item \label{e} The inclusion of $\CB$--modules $\{ \cA_n \}_{n=0}^d \hookrightarrow \{\cA_n\}$ induces a surjective map of $\Cat$--modules  $$\cM\left( \{ \cA_n \}_{n=0}^d \right)  \twoheadrightarrow \cA.$$ 
\end{enumerate} \end{proposition}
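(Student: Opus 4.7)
My plan is to establish the equivalences via a small cycle of implications, using the adjunction $\cM \dashv \cF$ from \autoref{DefMLeftAdjoint} and the transitivity of the $\G_n$-actions on Hom-sets recorded in \autoref{RemarkStabilizer}. Conceptually, each of (a)--(e) expresses the same underlying fact: that the $\Cat$-submodule of $\cA$ generated by $\{\cA_n\}_{n=0}^d$ is all of $\cA$. So the argument is largely a bookkeeping exercise rather than a substantive one.

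First I will handle (a) $\Leftrightarrow$ (d) $\Leftrightarrow$ (e). By adjunction, the $\CB$-module inclusion $\{\cA_n\}_{n=0}^d \hookrightarrow \cF(\cA)$ produces a canonical $\Cat$-module map $\cM(\{\cA_n\}_{n=0}^d) \to \cA$, and one checks directly that its image is precisely the $\Cat$-submodule of $\cA$ generated by $\{\cA_n\}_{n=0}^d$; this immediately yields (d) $\Leftrightarrow$ (e). The implication (e) $\Rightarrow$ (a) is trivial. For the converse, given a surjection $\cM(W) \twoheadrightarrow \cA$ with $W$ supported in degrees $\leq d$, the adjoint map $W \to \cF(\cA)$ factors through $\{\cA_n\}_{n=0}^d$ for support reasons, and applying $\cM$ yields a factorization $\cM(W) \to \cM(\{\cA_n\}_{n=0}^d) \to \cA$ in which the second arrow is forced to be surjective.

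Next, (b) $\Leftrightarrow$ (c) is a direct application of Frobenius reciprocity: for any $\Cat$-morphism $\iota \colon \bk^n \to \bk^{n+1}$, the induced map $\cA_n \to \cA_{n+1}$ is $\G_n$-equivariant (for the inclusion $\G_n \hookrightarrow \G_{n+1}$ stabilizing $\iota$), its adjoint is exactly $\Ind_{\G_n}^{\G_{n+1}} \cA_n \to \cA_{n+1}$, and the image of this adjoint is the $\G_{n+1}$-span of the image of $\cA_n$. Transitivity of $\G_{n+1}$ on $\Hom_\Cat(\bk^n, \bk^{n+1})$ (\autoref{RemarkStabilizer}) ensures that this image is independent of the choice of $\iota$.

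To close the loop I will prove (b) $\Leftrightarrow$ (d) by induction on $n$. The degree-$(n+1)$ part of the $\Cat$-submodule generated by $\{\cA_k\}_{k=0}^d$ is the $\G_{n+1}$-span of the images of $\cA_k$ under all morphisms $\bk^k \to \bk^{n+1}$, $k \leq d$; assuming inductively that this submodule already equals $\cA_n$ in degree $n$, one shows that for $n \geq d$ each such image is contained in $\sigma(f_n(\cA_n))$ for some $\sigma \in \G_{n+1}$ and some fixed $f_n \colon \bk^n \to \bk^{n+1}$, so the two $\G_{n+1}$-spans agree. The main point requiring care is this last transitivity step — verifying that any $\Cat$-morphism $\bk^k \to \bk^{n+1}$ with $k \leq n$ factors, up to the $\G_{n+1}$-action on the target, through some $\bk^n \to \bk^{n+1}$ — and this is exactly what \autoref{RemarkStabilizer} provides for both $\VIC^\U(\bk)$ and $\SI(\bk)$.
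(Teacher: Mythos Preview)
Your proposal is correct and follows essentially the same approach as the paper. The paper runs the cycle (a) $\Rightarrow$ (b) $\Rightarrow$ (d) $\Rightarrow$ (e) $\Rightarrow$ (a) with (b) $\Leftrightarrow$ (c) handled aside, while you organize the same ideas as (a) $\Leftrightarrow$ (d) $\Leftrightarrow$ (e) via the adjunction $\cM \dashv \cF$ and close the loop with a direct inductive proof of (b) $\Leftrightarrow$ (d); these are minor reorderings of the same argument.
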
 
\begin{proof} We can verify directly that if $W$ is supported in degrees $\leq d$ then $\cM(W)$ satisfies \ref{b}, and hence its $\Cat$--module quotients do. Thus \ref{a} implies \ref{b}. Parts \ref{b} and \ref{c} are equivalent by definition of induction. It is straightforward to conclude \ref{d}  from \ref{b}. Part \ref{d}  implies that any map of $\Cat$--modules to $\cA$ that is surjective in the first $d$ degrees must surject in all degrees, and so implies \ref{e}. Part \ref{a} is immediate from part \ref{e}. \end{proof}

\begin{remark} We note that the induced $\Cat$--module $\cM(W)$ is generated in degree $\leq d$ if and only if $W$ is supported in degree $\leq d$. 
\end{remark}


\begin{proposition} \label{InducedVsRepresentable}  Let $\bk$ be a finite field and let $R$ be a field of characteristic zero. Any induced $\Cat$--module $\cM(W)$ can be realized as both a $\Cat$--module quotient, and a $\Cat$--submodule, of $\Cat$--modules of the form $$\bigoplus_{m=0}^{\infty} \cM(m)^{\oplus c_m}$$ for some (possibly infinite) multiplicities $c_m$. If $\cM(W)$ is generated in degree $\leq d$, then we can realize it as a quotient or a submodule of $\Cat$--modules of the form $$\bigoplus_{m=0}^{d} \cM(m)^{\oplus c_m}.$$
More generally, if $\cA$ is any $\Cat$--module generated in degree $\leq d$, then we can realize $\cA$ as a quotient of a $\Cat$--module of the form $$\bigoplus_{m=0}^{d} \cM(m)^{\oplus c_m}.$$
\end{proposition}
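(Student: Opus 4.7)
The plan is to exploit the assumption that $\bk$ is a finite field and $R$ has characteristic zero, so every $\G_n$ is a finite group and, by Maschke's theorem, $R[\G_n]$ is semisimple. Consequently any $\G_n$--representation decomposes as a direct sum of irreducibles, and since every irreducible $\G_n$--representation appears as a summand of the regular representation $R[\G_n]$, any $R[\G_n]$--module $W_n$ is a direct summand of $R[\G_n]^{\oplus c_n}$ for some (possibly infinite) cardinal $c_n$.

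First I would handle the case where the $\CB$--module $W$ is concentrated in a single degree $m$, so that $\cM(W) = \cM(m) \otimes_{R[\G_m]} W_m$. Fix an identification of $W_m$ as a direct summand of $R[\G_m]^{\oplus c_m}$. By \autoref{MExact} the functor $\cM$ is exact, and as a left adjoint (\autoref{DefMLeftAdjoint}) it commutes with arbitrary direct sums, so applying $\cM$ realizes $\cM(W)$ as a direct summand of $\cM(R[\G_m]^{\oplus c_m}) = \cM(R[\G_m])^{\oplus c_m}$. A direct computation using the definition of $\cM$ gives $\cM(R[\G_m]) = \cM(m) \otimes_{R[\G_m]} R[\G_m] = \cM(m)$, so $\cM(W)$ is a direct summand of $\cM(m)^{\oplus c_m}$. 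In particular it is simultaneously a $\Cat$--submodule and a quotient of that sum.

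For a general $\CB$--module $W = \{W_m\}_{m \geq 0}$, apply the single-degree argument in each degree and take the direct sum: $\cM(W) = \bigoplus_m \cM(W_m)$ becomes a direct summand (hence both submodule and quotient) of $\bigoplus_m \cM(m)^{\oplus c_m}$. When $\cM(W)$ is generated in degree $\leq d$, the remark following \autoref{RemarkIndSurjects} forces $W_m = 0$ for $m > d$, truncating the sum to $m \leq d$. Finally, for an arbitrary $\Cat$--module $\cA$ generated in degree $\leq d$, part \textit{(e)} of \autoref{RemarkIndSurjects} provides a surjection $\cM(\{\cA_n\}_{n=0}^{d}) \twoheadrightarrow \cA$; composing with the quotient presentation of $\cM(\{\cA_n\}_{n=0}^{d})$ by some $\bigoplus_{m=0}^{d} \cM(m)^{\oplus c_m}$ produced above gives the desired surjection.

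There is no substantive obstacle here; the argument is essentially semisimplicity combined with the exactness and left-adjointness of $\cM$. The only subtlety worth flagging is that the multiplicities $c_m$ must be allowed to be arbitrary (possibly infinite) cardinals, since $W_m$ and $\cA_n$ are not assumed to be finitely generated; this is precisely why the statement of the proposition permits infinite multiplicities.
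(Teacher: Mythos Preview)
Your proof is correct and follows essentially the same approach as the paper: both use semisimplicity of $R[\G_m]$ to realize $W_m$ inside copies of the regular representation, then push this through the exact functor $\cM$ (noting $\cM(R[\G_m])=\cM(m)$), and finish the general case by composing with a surjection $\cM(W)\twoheadrightarrow\cA$. Your phrasing is marginally cleaner in that you observe $W_m$ is a direct \emph{summand} of $R[\G_m]^{\oplus c_m}$ at the outset, whereas the paper builds the surjection first and then splits it, but the content is the same.
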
 
Notably, the following constructions are valid even if we allow the $R[\G_n]$--representations $W_n$ to be infinite-dimensional.

\begin{proof}[Proof of \autoref{InducedVsRepresentable}] Observe that we can construct a $\CB$--module $\{ R[\G_n]^{\oplus c_n} \}$ so as to obtain a map of $\CB$--modules $$ R[\G_n]^{\oplus c_n} \to W_n $$ that surjects in each degree $n$. If $W$ is supported in degree $\leq d$ we may take $c_n=0$ for $n>d$. Applying the functor $\cM$ we obtain a map of $\Cat$--modules,  $$ \cM\left( \{ R[\G_m]^{\oplus c_m} \} \right) = \bigoplus_m \cM\left( m \right)^{\oplus c_m}  \longrightarrow \cM(W),$$
and by \autoref{MExact} this map surjects. 

Moreover, since the algebras $R[\G_n]$ are semi-simple by assumption, the maps $ R[\G_n]^{\oplus c_n} \to W_n $ split to give an injective map of $\CB$--modules $W \to  \{ R[\G_n]^{\oplus c_n} \}$. Again the induced map  $$\cM(W) \longrightarrow   \bigoplus_m \cM\left( m \right)^{\oplus c_m}$$ is injective by \autoref{MExact}. 

Finally, if $\cA$ is any $\Cat$--module generated in degree $\leq d$, then by definition of generation degree we can realize $\cA$ as a quotient $\cM(W) \to \cA$ with $W$ supported in degree $\leq d$. Then we may compose this map with the surjection constructed above to obtain the desired surjective map
\[ \bigoplus_{m=0}^d \cM\left( m \right)^{\oplus c_m}  \longrightarrow \cM(W) \longrightarrow  \cA. \qedhere \]
\end{proof}

\subsection{Weight and stability degree}

In this subsection, we will introduce concepts of \emph{weight} and \emph{stability degree} for $\Cat$--modules, closely analogous to the concepts of the same name used by Church, Ellenberg, and Farb \cite{CEF} in the study of $\FI$--modules. These $\Cat$--module invariants will be our main tool for bounding the generation degrees of the terms in resolutions of $\Cat$--modules. 

\begin{definition} \label{DefnWeight} 
 A $\Cat$--module $\cA$ has \emph{weight $\leq d$} if for each $n$, the $\G_n$--representation $\cA_n$ is a subquotient of a representation of the form $\bigoplus_{m\leq d} \cM(m)_n^{\oplus c_m}$ for some (possibly infinite) coefficients $c_m$.
\end{definition}

\begin{remark} \label{RemarkWeightSubquotients}
It follows from the definition that if $\cA$ is a $\Cat$--module of weight $\leq d$, then any subquotient of $\cA$ has weight $\leq d$. 
\end{remark}

\begin{remark} \label{RemarkWeightGeneration}
By \autoref{InducedVsRepresentable}, any $\Cat$--module $\cA$ generated in degree $\leq d$ must be a quotient of the form in \autoref{DefnWeight}, and so $\cA$ has weight $\leq d$. 
\end{remark}


\begin{lemma} \label{LemmaCoinvariantsVanish} Let $\bk$ be a finite commutative ring, and let $R$ be a field of characteristic zero. Suppose that $\cA$ is a $\Cat$--module over $R$ of weight $\leq d$, and that $C_n$ is any subquotient of the $\G_n$--representation $\cA_n$. Then $C_n = 0$ if and only if $(C_n)_{\G_{n-d}} = 0$.
\end{lemma}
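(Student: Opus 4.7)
The forward implication is trivial, so the plan is to establish the converse: if $(C_n)_{\G_{n-d}} = 0$ then $C_n = 0$. First I would reduce to the case where $C_n$ sits inside the weight-controlling module $M := \bigoplus_{m \leq d} \cM(m)_n^{\oplus c_m}$. Since $\bk$ is finite, the group $\G_n$ is finite, and Maschke's theorem gives that $R[\G_n]$ is semisimple. Consequently every subquotient of an $R[\G_n]$-module is isomorphic to a subrepresentation, and by transitivity of this property $C_n$ is isomorphic as a $\G_n$-representation to a subrepresentation of $M$. It therefore suffices to show that any nonzero $\G_n$-subrepresentation $V$ of $M$ has $V_{\G_{n-d}} \neq 0$.

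Next, I would identify the irreducible constituents of $\cM(m)_n$ using Frobenius reciprocity. Under the identification $\cM(m)_n \cong \Ind_{\G_{n-m}}^{\G_n} R$ coming from the stabilizer description in \autoref{RemarkStabilizer}, for any irreducible $\G_n$-representation $\rho$ we have
\[ \Hom_{\G_n}(\rho, \cM(m)_n) \;\cong\; \Hom_{\G_{n-m}}(\rho|_{\G_{n-m}}, R) \;\cong\; (\rho^*)^{\G_{n-m}}, \]
so the irreducible constituents of $\cM(m)_n$ are precisely those $\rho$ with $\rho^{\G_{n-m}} \neq 0$. For $m \leq d$ the standard inclusion $\G_{n-d} \subseteq \G_{n-m}$ gives $\rho^{\G_{n-d}} \supseteq \rho^{\G_{n-m}} \neq 0$. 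Hence every irreducible constituent of $M$, and therefore of the subrepresentation $V$, has nonzero $\G_{n-d}$-fixed subspace; in particular $V^{\G_{n-d}} \neq 0$ whenever $V \neq 0$.

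Finally, I would invoke the standard characteristic-zero fact that for a finite group $H$, the composite $W^H \hookrightarrow W \twoheadrightarrow W_H$ built from the averaging operator $\tfrac{1}{|H|}\sum_{h \in H} h$ is an isomorphism for any $H$-representation $W$. Applied constituent-by-constituent to the semisimple decomposition of $V$, taking $H = \G_{n-d}$, this yields $V^{\G_{n-d}} \cong V_{\G_{n-d}}$; the previous step guarantees the left-hand side is nonzero whenever $V \neq 0$, so $V_{\G_{n-d}} = 0$ forces $V = 0$. I do not anticipate a serious obstacle: the argument is essentially a bookkeeping exercise in semisimple representation theory. The one subtlety is that the multiplicities $c_m$ may be infinite, so that $M$ and $V$ need not be finite-dimensional; however, each irreducible $\G_n$-representation is itself finite-dimensional, and the relevant isomorphisms (Frobenius reciprocity and invariants $\cong$ coinvariants) are computed constituent-by-constituent using the semisimple direct-sum decomposition, so no finiteness is required on the total module.
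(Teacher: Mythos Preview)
Your proof is correct and essentially matches the paper's approach: both rely on semisimplicity of $R[\G_n]$, Frobenius reciprocity for $\cM(m)_n \cong \Ind_{\G_{n-m}}^{\G_n} R$, and the identification of invariants with coinvariants in characteristic zero. The paper applies Frobenius reciprocity in the form $\Hom_{\G_n}(\cM(m)_n, C_n) \cong \Hom_{\G_m}\big(R[\G_m], (C_n)_{\G_{n-m}}\big)$ to conclude directly, whereas you first characterize the irreducible constituents of $\cM(m)_n$ and then argue constituent-wise---a minor reorganization of the same computation.
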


An analogous statement for $\FI$--modules was proved by Church--Ellenberg--Farb \cite[Lemma 3.2.7(iv)]{CEF}. Their proof uses combinatorial properties of the branching rules for induction of symmetric group representations. The following proof instead uses Frobenius reciprocity. 

\begin{proof}[Proof of  \autoref{LemmaCoinvariantsVanish}]

If $C_n=0$, then its coinvariants must vanish. So suppose that  $(C_n)_{\G_{n-d}} = 0$. Note that if $m \leq d$, then $(C_n)_{\G_{n-m}}$ is a quotient of $(C_n)_{\G_{n-d}}$ and therefore also vanishes. 
To verify that $C_{n}$ vanishes, it is enough to show that 
$$\Hom_{R[\G_n]}( U, C_n ) = 0 \quad \text{ for all $\G_n$--representations $U$.} $$ 
By the definition of weight, and because we are working with finite groups over characteristic zero, any irreducible subrepresentation of $C_n$ must be contained in a $\G_n$--representation $U$ of the form 
$$U = \Ind_{\G_m \times \G_{n-m}}^{\G_n} R[\G_m] \boxtimes R \qquad \text{ with $m \leq d$} $$
so it suffices to check that 
$\Hom_{R[\G_n]}( U, C_n ) = 0 $  in this case.
Using Frobenius reciprocity (or the tensor-Hom adjunction), we find
\begin{align*}
\Hom_{R[\G_n]} &  \left( \mathrm{Ind}^{\G_n}_{\G_m \times \G_{n-m}} R[\G_m] \boxtimes R, \;  C_n \right) \\
 &= \Hom_{R[\G_m \times \G_{n-m}]} \left( R[\G_m] \boxtimes R , \; \mathrm{Res}^{\G_n}_{\G_m \times \G_{n-m}}  C_n \right) \\
&=  \Hom_{R[\G_m]}  \left( R[\G_m],  \; (C_n)_{\G_{n-m}} \right) \\ 
& = 0  \end{align*} 
as claimed. 
\end{proof}


\begin{definition} A $\Cat$--module $\cA$ has \emph{stability degree $\leq s$} if for each $a \geq 0$, the induced map 
$$ (\cA_n)_{\G_{n-a}} \longrightarrow (\cA_{n+1})_{\G_{n+1-a}}  $$ is an isomorphism for all $n \geq s+a$. We further say that $\cA$ has \emph{injectivity degree $\leq s$} if these maps are injective for $n \geq s+a$, and \emph{surjectivity degree $\leq s$} if these maps are surjective for $n \geq s+a$. We use the notation $\mathrm{InjDeg}(\cA) \leq s$ (respectively, $\mathrm{SurjDeg}(\cA) \leq s$) to indicate that $\cA$ has injectivity degree (respectively, surjectivity degree) $\leq s$. 
\end{definition} 

\begin{proposition} \label{StabDegQuotientsSubmodules}  Let $\bk$ be a finite commutative ring, let $R$ be a field of characteristic zero, and let $\cB$ be a  $\Cat$--module over $R$. If $\cB$ has surjectivity degree $\leq s$, then so does any quotient of $\cB$.  If $\cB$ has injectivity degree $\leq t$, then so does any submodule of $\cB$. 
\end{proposition}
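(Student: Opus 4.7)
The plan is to exploit the fact that, since $\bk$ is a finite commutative ring and $R$ has characteristic zero, every relevant group algebra $R[\G_{n-a}]$ is semi-simple (being finite-dimensional over $R$). Consequently, the coinvariants functor $(-)_{\G_{n-a}}$ is not merely right exact but exact on the category of $R[\G_n]$--modules, since taking coinvariants coincides with projection onto the isotypic component of the trivial representation, i.e.\ a direct summand.

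For the surjectivity statement, I would take a surjection $\pi \colon \cB \twoheadrightarrow \cQ$ of $\Cat$--modules and, for each $a \geq 0$ and each $n \geq s + a$, consider the natural commutative square
\[
\begin{array}{ccc}
(\cB_n)_{\G_{n-a}} & \longrightarrow & (\cB_{n+1})_{\G_{n+1-a}} \\
\downarrow & & \downarrow \\
(\cQ_n)_{\G_{n-a}} & \longrightarrow & (\cQ_{n+1})_{\G_{n+1-a}}
\end{array}
\]
whose top row is surjective by hypothesis and whose vertical arrows are surjective because $\pi$ is surjective and coinvariants is right exact. A trivial diagram chase then shows the bottom row is surjective for all $n \geq s+a$, which is exactly the claim that $\cQ$ has surjectivity degree $\leq s$.

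For the injectivity statement, I would take a submodule $\iota \colon \cS \hookrightarrow \cB$ and form the analogous square with $\cS$ in place of $\cQ$ on top and $\cB$ on bottom. Here the crucial point is that the vertical arrows $(\cS_n)_{\G_{n-a}} \to (\cB_n)_{\G_{n-a}}$ are injective; this uses exactness of coinvariants in characteristic zero, without which only right exactness would be available and the statement could fail. Combined with injectivity of the bottom row (for $n \geq t+a$) the square forces injectivity of the top row.

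The argument is essentially a pair of short diagram chases, so there is no real obstacle; the only subtle point is to make explicit, and to use, the semi-simplicity assumption to upgrade right exactness of $(-)_{\G_{n-a}}$ to full exactness. I would make sure to state this as a short preliminary remark (perhaps recalling that $(-)_{\G_{n-a}}$ is a direct summand of the identity functor in this setting) before executing the two chases.
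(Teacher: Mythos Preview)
Your proposal is correct and follows essentially the same approach as the paper: both arguments use exactness of the coinvariants functor in characteristic zero to set up the two commutative squares and perform the obvious diagram chases. The only cosmetic difference is that you spell out the semi-simplicity reason for exactness a bit more explicitly than the paper does.
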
 
\begin{proof} Suppose that $\cA$ is a submodule of $\cB$ and that $\cC$ is a quotient. Since the operation of taking coinvariants by a finite group is exact over characteristic zero, we obtain the following commutative diagrams. For $n \geq a+t$, the diagram
\begin{center} \begin{tikzpicture}
  \matrix (m) [matrix of math nodes,row sep=3em,column sep=4em,minimum width=2em]
  {
     (\cA_n)_{\G_{n-a}} & (\cA_{n+1})_{\G_{n+1-a}} \\
     (\cB_n)_{\G_{n-a}} & (\cB_{n+1})_{\G_{n+1-a}}  \\};
   \path[-stealth] (m-1-1) edge (m-1-2);
 \path[right hook->]    (m-1-1) edge(m-2-1)  
    (m-2-1) edge (m-2-2)
    (m-1-2) edge  (m-2-2);
\end{tikzpicture} \end{center}
implies that the map $(\cA_n)_{\G_{n-a}}  \longrightarrow (\cA_{n+1})_{\G_{n+1-a}}$ injects.

 For $n \geq a+s$,  the diagram

\begin{center} \begin{tikzpicture}
  \matrix (m) [matrix of math nodes,row sep=3em,column sep=4em,minimum width=2em]
  {
    (\cB_n)_{\G_{n-a}} & (\cB_{n+1})_{\G_{n+1-a}}\\
     (\cC_n)_{\G_{n-a}} & (\cC_{n+1})_{\G_{n+1-a}}  \\};
   \path[-stealth]    (m-2-1) edge  (m-2-2);
 \path[->>]   (m-1-1) edge (m-1-2)
 (m-1-1) edge(m-2-1)  
    (m-1-2) edge (m-2-2);
\end{tikzpicture} \end{center}
shows that the map $ (\cC_n)_{\G_{n-a}} \longrightarrow (\cC_{n+1})_{\G_{n+1-a}}$ is surjective. 
\end{proof}

\begin{proposition} \label{StabDegKernelsCokernels}
Let $\bk$ be a finite commutative ring, let $R$ be a field of characteristic zero, and let $f : \cA \to \cB$ be a map of $\Cat$--modules over $R$. Then
\begin{align*} \mathrm{InjDeg}({\ker f}) &\leq \mathrm{InjDeg}(\cA) &&& \mathrm{SurjDeg}({\ker f}) &\leq \max\Big( \mathrm{SurjDeg}(\cA), \mathrm{InjDeg}(\cB) \Big) &\\ 
\mathrm{InjDeg}({\coker f}) &\leq \max\Big( \mathrm{SurjDeg}(\cA), \mathrm{InjDeg}(\cB) \Big) &&& \mathrm{SurjDeg}({\coker f})& \leq \mathrm{SurjDeg}(\cB). &
\end{align*}
\end{proposition}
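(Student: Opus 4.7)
The plan is to reduce the four inequalities to the previous proposition on submodules and quotients, and to handle the two remaining cases with a standard diagram chase on coinvariants. The key technical input—exactness of $H$-coinvariants for a finite group $H$ over a characteristic-zero field—is already in hand and was used in the proof of \autoref{StabDegQuotientsSubmodules}.

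Two of the four bounds are immediate from \autoref{StabDegQuotientsSubmodules}: since $\ker f$ is a $\Cat$--submodule of $\cA$, we get $\mathrm{InjDeg}(\ker f) \leq \mathrm{InjDeg}(\cA)$; since $\coker f$ is a quotient of $\cB$, we get $\mathrm{SurjDeg}(\coker f) \leq \mathrm{SurjDeg}(\cB)$. The same proposition applied to $\im f$, which is simultaneously a quotient of $\cA$ and a submodule of $\cB$, yields the auxiliary bounds $\mathrm{SurjDeg}(\im f) \leq \mathrm{SurjDeg}(\cA)$ and $\mathrm{InjDeg}(\im f) \leq \mathrm{InjDeg}(\cB)$.

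For the remaining two inequalities I would use the short exact sequences
\[ 0 \to \ker f \to \cA \to \im f \to 0 \qquad \text{and} \qquad 0 \to \im f \to \cB \to \coker f \to 0. \]
Taking $\G_{n-a}$-coinvariants and $\G_{n+1-a}$-coinvariants, exactness of coinvariants gives, for each $a$ and $n$, a commutative diagram whose rows are short exact and whose vertical maps are induced by the stabilization maps of the respective $\Cat$--modules. To bound $\mathrm{SurjDeg}(\ker f)$, fix $n \geq a + \max(\mathrm{SurjDeg}(\cA), \mathrm{InjDeg}(\cB))$. In this range the vertical map on coinvariants of $\cA$ is surjective (by the definition of surjectivity degree) and the vertical map on coinvariants of $\im f$ is injective (using the auxiliary bound $\mathrm{InjDeg}(\im f) \leq \mathrm{InjDeg}(\cB)$). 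A short diagram chase (or the snake lemma) then forces the vertical map on coinvariants of $\ker f$ to be surjective, giving the desired bound. Dually, applying the analogous chase to the second short exact sequence, with the left column surjective (using $\mathrm{SurjDeg}(\im f) \leq \mathrm{SurjDeg}(\cA)$) and the middle column injective, yields $\mathrm{InjDeg}(\coker f) \leq \max(\mathrm{SurjDeg}(\cA), \mathrm{InjDeg}(\cB))$.

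There is no serious obstacle: the characteristic-zero hypothesis on $R$ and the finiteness of each $\G_n$ already guarantee exactness of coinvariants, and the rest is formal. The only thing to keep straight is which direction each snake-lemma inference runs—surjectivity propagates "outward" from a middle surjection past an injection on one side, and injectivity propagates "outward" from a middle injection past a surjection on the other side.
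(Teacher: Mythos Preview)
Your proposal is correct and follows essentially the same approach as the paper: two bounds from \autoref{StabDegQuotientsSubmodules}, then exactness of coinvariants and a diagram chase for the other two. The only cosmetic difference is that you factor through $\im f$ and use the two short exact sequences separately, whereas the paper runs the chase directly on the three-term pieces $(\ker f, \cA, \cB)$ and $(\cA, \cB, \coker f)$ of the four-term exact sequence; your intermediate bounds on $\im f$ are exactly what makes the two arguments line up.
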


\begin{proof} 
The results $\mathrm{InjDeg}({\ker f}) \leq \mathrm{InjDeg}(\cA)$ and $\mathrm{SurjDeg}({\coker f}) \leq \mathrm{SurjDeg}(\cB)$ follow from \autoref{StabDegQuotientsSubmodules}. 
 Since taking coinvariants is exact over $R$, for $n \geq a+ \max\Big( \mathrm{SurjDeg}(\cA), \mathrm{InjDeg}(\cB) \Big) $ we obtain the following commutative diagram with exact columns

\begin{center} \begin{tikzpicture}
  \matrix (m) [matrix of math nodes,row sep=3em,column sep=4em,minimum width=2em]
  { (\ker f_n)_{\G_{n-a}} & (\ker f_{n+1})_{\G_{n+1-a}}\\
    (\cA_n)_{\G_{n-a}} & (\cA_{n+1})_{\G_{n+1-a}}\\
     (\cB_n)_{\G_{n-a}} & (\cB_{n+1})_{\G_{n+1-a}}  \\};
 \path[->>]     (m-2-1) edge  (m-2-2);
      \path[-stealth]    (m-2-1) edge node[right] {$f_*$} (m-3-1)  
    (m-2-2) edge node[left] {$f_*$} (m-3-2)
 (m-1-1) edge (m-1-2);
 \path[right hook->]  (m-1-1) edge(m-2-1)  
    (m-1-2) edge (m-2-2)
 (m-3-1) edge (m-3-2);
\end{tikzpicture} \end{center}
A routine diagram chase demonstrates that the map $ (\ker f_n)_{\G_{n-a}} \longrightarrow (\ker f_{n+1})_{\G_{n+1-a}}$ surjects, as desired. 
We also obtain, for $n \geq a+ \max\Big( \mathrm{SurjDeg}(\cA), \mathrm{InjDeg}(\cB) \Big) $ the commutative diagram with exact columns

\begin{center} \begin{tikzpicture}
  \matrix (m) [matrix of math nodes,row sep=3em,column sep=4em,minimum width=2em]
  {  (\cA_n)_{\G_{n-a}} & (\cA_{n+1})_{\G_{n+1-a}}\\
     (\cB_n)_{\G_{n-a}} & (\cB_{n+1})_{\G_{n+1-a}}  \\
   (\coker f_n)_{\G_{n-a}} & (\coker f_{n+1})_{\G_{n+1-a}}\\};
 \path[->>]     (m-1-1) edge  (m-1-2)
 (m-2-2) edge (m-3-2)
 (m-2-1) edge  (m-3-1) ;
 \path[-stealth]    (m-1-1) edge node[right] {$f_*$}(m-2-1)  
 (m-1-2) edge node[left] {$f_*$}  (m-2-2)
 (m-3-1) edge (m-3-2);
 \path[right hook->]   (m-2-1) edge (m-2-2);
\end{tikzpicture} \end{center}
We can verify that the kernel of the map $  (\coker f_n)_{\G_{n-a}} \longrightarrow (\coker f_{n+1})_{\G_{n+1-a}}$ vanishes, which concludes the proof. \end{proof}

Patzt proved the following result on the stability degree of representable $\Cat$--modules. We remark that, although he only states the results for $\bk=\Q$, his proof only uses the assumption that $\bk$ is a field.

\begin{proposition}[{Patzt \cite[Proposition 3.11]{PatztFiltrations}}] \label{TheoremStabDegVICM(d)} Let $R$ be ring and $\bk$ a field. Let $\Cat$ be the category $\VIC(\bk)$ or $\SI(\bk)$. Then the representable  $\C$--module $\cM(d)$ over $R$ has injectivity degree $\leq 0$, and surjectivity degree $\leq 2d$.
\end{proposition}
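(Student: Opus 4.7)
The plan is to reduce the claim to a combinatorial statement about orbits. Since $\cM(d)_n = R[\Hom_\Cat(\bk^d, \bk^n)]$ is a permutation $R[\G_n]$--module, its $\G_{n-a}$--coinvariants are freely generated over $R$ by the orbit set,
$$ (\cM(d)_n)_{\G_{n-a}} \;\cong\; R\!\left[\, \G_{n-a} \backslash \Hom_\Cat(\bk^d, \bk^n)\, \right]. $$
Here $\G_{n-a}$ is embedded in $\G_n$ as the pointwise stabilizer of the standard subspace $\bk^a \subset \bk^n$ (respectively $\bk^{2a} \subset \bk^{2n}$ in the symplectic case), acting on the complement (respectively the symplectic complement). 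The stabilization maps are induced by post-composition with the standard inclusion $\iota \colon \bk^n \hookrightarrow \bk^{n+1}$, so the proposition reduces to injectivity of the orbit-set map for $n \geq a$ and surjectivity for $n \geq 2d+a$.

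For injectivity, I would classify $\G_{n-a}$--orbits of morphisms by a small collection of invariants: in the $\VIC$ case the triple consisting of the subspace $T(\bk^d)\cap \bk^a \subseteq \bk^a$, the complement data restricted to $\bk^a$, and the induced map on quotients; in the $\SI$ case the subspace $f(\bk^{2d})\cap \bk^{2a}$ together with the restricted symplectic form and the induced map into the quotient. All of these invariants are manifestly preserved by the stabilization map (since $\iota$ is the identity on $\bk^a$ and attaches a new ``standard'' direction in the complement). The conclusion is that two orbits with the same stabilized image agree already, which gives injectivity degree $\leq 0$.

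For surjectivity, given $f \in \Hom_\Cat(\bk^d, \bk^{n+1})$ I would seek $\sigma \in \G_{n+1-a}$ so that $\sigma \circ f$ factors through $\iota$, i.e.\ $\sigma$ rotates the image (together with the complement/symplectic data) into $\bk^n \subset \bk^{n+1}$. Since $\G_{n+1-a}$ acts by automorphisms on the rank $n+1-a$ complement to $\bk^a$, Witt's extension theorem (with its straightforward $\VIC$ analog for complemented subspaces) produces such a $\sigma$ as long as there is room inside the complement both for the ``non-standard'' part of $f$'s image and for a chosen standard target; a direct dimension count identifies ``enough room'' with the requirement that the rank of the complement exceed twice the rank of the image outside $\bk^a$, yielding the bound $n \geq 2d + a$.

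The main obstacle is the surjectivity step. It requires a careful enumeration of the $\G_{n+1-a}$--orbits via Witt-type extension, and in the $\SI$ case one must contend with the fact that the intersection $f(\bk^{2d}) \cap \bk^{2a}$ can be degenerate, forcing one to track its radical and the induced form on the quotient. The factor of $2$ in the bound $2d$ comes precisely from the need to simultaneously accommodate the given image of $f$ and its rotated standard representative inside the same $(n+1-a)$--dimensional complement.
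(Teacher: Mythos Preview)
The paper does not give its own proof of this proposition: it is quoted from Patzt \cite[Proposition 3.11]{PatztFiltrations}, and the only information the paper supplies about the argument appears incidentally in the proof of \autoref{VICUcoset}. There one sees that Patzt's method is explicitly matrix-theoretic: the coinvariants $(\cM(d)_n)_{\G_{n-a}}$ are identified with double cosets $\G_{n-a}\backslash \G_n/\G_{n-d}$, represented by $n\times n$ matrices modulo row operations on the bottom $n-a$ rows and column operations on the rightmost $n-d$ columns, and one produces explicit matrices $A\in\GL_{n+1-a}(\bk)$, $D\in\GL_{n+1-d}(\bk)$ putting a given representative into block form $\begin{bmatrix}C&0\\0&1\end{bmatrix}$.

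Your reduction to the map of orbit sets $\G_{n-a}\backslash\Hom_\Cat(\bk^d,\bk^n)\to\G_{n+1-a}\backslash\Hom_\Cat(\bk^d,\bk^{n+1})$ is the same first step (these orbits are exactly the double cosets above, via $\Hom_\Cat(\bk^d,\bk^n)\cong\G_n/\G_{n-d}$). From there your route diverges: rather than explicit row/column reduction you propose a geometric argument via orbit invariants for injectivity and a Witt-type extension for surjectivity. This is a legitimate alternative and, once carried out, yields the same bound $2d$ for the same reason (one needs enough room in the complement of $\bk^a$ to hold both the given image and a standard target simultaneously).

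That said, what you have written is a strategy rather than a proof. Two points need genuine work. First, your list of invariants for injectivity in the $\VIC$ case is not obviously complete: knowing $T(\bk^d)\cap\bk^a$, some ``complement data restricted to $\bk^a$'', and ``the induced map on quotients'' does not clearly pin down the $\GL_{n-a}$--orbit of the pair $(T,C)$ without a more careful statement of what these invariants are and why they suffice. Second, in the $\SI$ case you correctly flag that $f(\bk^{2d})\cap\bk^{2a}$ can be degenerate; applying Witt's theorem then requires extending an isometry defined only on a possibly degenerate subspace, and one must check that the relevant version of Witt (for isometries between subspaces of a nondegenerate space) applies and that the dimension count $n\geq 2d+a$ is exactly what is needed. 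None of this is wrong, but as written it is an outline with the key verifications deferred.
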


We now explain how to leverage this result to prove an analogous statement for $\VIC^{\U}(\bk)$.

\begin{proposition} \label{VICUcoset} Let $R$ be ring, $\bk$ a field, and $\U$ a subgroup of $\bk^\times$. Then the representable  $\VIC^{\U}(\bk)$--module $\cM(d)$ over $R$ has injectivity degree $\leq 2d+1$, and surjectivity degree $\leq 2d$.
\end{proposition}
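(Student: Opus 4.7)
The plan is to reduce to Patzt's theorem (\autoref{TheoremStabDegVICM(d)}) for $\VIC(\bk)$ via two ingredients: an identification of the representable $\VIC^{\U}(\bk)$-- and $\VIC(\bk)$--modules at all levels $n > d$, and a stabilizer argument that converts $\GL_{n-a}(\bk)$--orbit information into $\GL_{n-a}^{\U}(\bk)$--orbit information. The identification is immediate from \autoref{defVICU}: for $d < n$, the $\U$--orientation on the complement $C$ in a morphism $(T, C) \colon \bk^d \to \bk^n$ is uniquely determined, so $\Hom_{\VIC^{\U}(\bk)}(\bk^d, \bk^n) = \Hom_{\VIC(\bk)}(\bk^d, \bk^n)$ as sets. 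Consequently $\cM^{\VIC^{\U}}(d)_n = \cM^{\VIC}(d)_n$ as $R$--modules, the $\GL_n^{\U}(\bk)$--action being the restriction of the $\GL_n(\bk)$--action, and the stability maps coincide under this identification.

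The key technical input is the following stabilizer lemma: for $n > a$, any $x = (T, C) \in \Hom_{\VIC(\bk)}(\bk^d, \bk^n)$ that lies in the image of the stability map from level $n - 1$ (equivalently, with $e_n \in C$) has its stabilizer in $\GL_{n-a}(\bk)$ containing the one-parameter family $g_\lambda = \mathrm{diag}(1, \ldots, 1, \lambda)$ scaling $e_n$ by $\lambda \in \bk^\times$. Indeed, $T(\bk^d) \subseteq \bk^{n-1}$ is fixed pointwise by each $g_\lambda$, and $C = C' \oplus \mathrm{span}(e_n)$ is preserved. Thus the determinant surjects from this stabilizer onto $\bk^\times$, and in particular onto $\bk^\times / \U$.

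Surjectivity degree $\leq 2d$ then follows directly: for $n \geq 2d + a$ and any $y \in \Hom_{\VIC(\bk)}(\bk^d, \bk^{n+1})$, \autoref{TheoremStabDegVICM(d)} supplies $g \in \GL_{n+1-a}(\bk)$ and $(T, C) \in \Hom_{\VIC(\bk)}(\bk^d, \bk^n)$ with $g \cdot (T, C \oplus \mathrm{span}(e_{n+1})) = y$; the stabilizer lemma applied to $(T, C \oplus \mathrm{span}(e_{n+1}))$ then furnishes an $s$ with $\det s = \det g$, so $g s^{-1} \in \GL_{n+1-a}^{\U}(\bk)$ achieves the same identification. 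For injectivity degree $\leq 2d + 1$, given $x_1, x_2 \in \Hom_{\VIC(\bk)}(\bk^d, \bk^n)$ at level $n \geq 2d + 1 + a$ whose stability images are $\GL_{n+1-a}^{\U}(\bk)$--equivalent, I first invoke the surjectivity just proven (valid at level $n-1 \to n$ since $n - 1 \geq 2d + a$) to replace $x_1, x_2$ by $\GL_{n-a}^{\U}(\bk)$--equivalent representatives that have $e_n$ in their complements; Patzt's injectivity ($n \geq a$) then yields $h \in \GL_{n-a}(\bk)$ with $h \cdot x_1 = x_2$, and one more application of the stabilizer lemma to $x_1$ lets me replace $h$ by $h s^{-1} \in \GL_{n-a}^{\U}(\bk)$. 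The main subtlety is the stabilizer lemma: exploiting the ``free direction'' $e_n$ in the complement is the sole new geometric input beyond Patzt's result, and the need to first place representatives into stability-image form via surjectivity is precisely what forces the extra $+1$ in the injectivity bound compared with Patzt's bound of $0$ for $\VIC(\bk)$.
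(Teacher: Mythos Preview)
Your proof is correct and follows essentially the same strategy as the paper: reduce to Patzt's result for $\VIC(\bk)$ and then correct the determinant obstruction. The paper carries this out via explicit matrix manipulations on double-coset representatives in $\GL_{n-a}^{\U}(\bk) \backslash \GL_n^{\U}(\bk) / \GL_{n-d}^{\U}(\bk)$, multiplying by diagonal matrices of the form $\mathrm{diag}(1,\ldots,1,\lambda)$ to adjust determinants; your stabilizer lemma is exactly the abstract formulation of this diagonal trick, and your use of surjectivity at level $n-1\to n$ to place representatives in stability-image form before invoking Patzt's injectivity mirrors the paper's argument and explains the $+1$ in the injectivity bound. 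One minor caveat: the parenthetical ``(equivalently, with $e_n \in C$)'' is not literally an equivalence, since being in the stability image also forces $T(\bk^d)\subseteq \bk^{n-1}$, and this is what you actually use to see that $g_\lambda$ fixes $T$; but you only ever apply the lemma to genuine stability images, so the argument is unaffected.
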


\begin{proof}
Fix $a \geq 0$. Let $\I_m$ denote the $(m \times m)$ identity matrix. By definition the $\GL^{\U}_n(\bk)$--representation $\cM(d)_n$ is a permutation representation with $R$--basis the set of cosets $\GL^{\U}_n(\bk) / \GL^{\U}_{n-d}(\bk)$. It follows that its coinvariants $\left(\cM(d)_n\right)_{\GL^{\U}_{n-a}(\bk)}$ has $R$--basis the set of double cosets $$\GL^{\U}_{n-a}(\bk) \backslash \GL^{\U}_n(\bk) / \GL^{\U}_{n-d}(\bk).$$ Concretely, this is the set of $(n \times n)$ matrices $B$ with determinants in $\U$ defined up to the action of 
$$ \GL^{\U}_{n-a}(\bk) \cong \left\{  \left[\begin{array}{ccc|ccc} &  \I_a & && 0 &  \\[.5em] \hline &&&&&\\[-.5em]  & 0 &&& \bigstar &  \end{array} \right] \right\} $$
on the left -- acting by row operations on the bottom $(n-a)$ rows of $B$  -- and the action of 
$$ \GL^{\U}_{n-d}(\bk) \cong  \left\{ \left[\begin{array}{ccc|ccc} &  \I_d & && 0 &  \\[.5em] \hline &&&&&\\[-.5em]  & 0 &&& \bigstar &  \end{array} \right] \right\} $$ on the right, acting by column operations on the rightmost $(n-d)$ columns of $B$.
The map  $$(\cM(d)_n)_{\GL^{\U}_{n-a}(\bk)}\to (\cM(d)_{n+1})_{\GL^{\U}_{n+1-a}(\bk)}$$ defining stability degree is induced by the map 
\begin{align*}
\GL^{\U}_n(\bk) \longrightarrow& \GL^{\U}_{n+1}(\bk) \\ 
C \longmapsto&  \begin{bmatrix}   & 0 \\ C & \vdots \\ & 0 \\   0  \cdots 0 & 1\end{bmatrix}  .
\end{align*}
We will first establish the bound on surjectivity degree for $\cM(d)$, by proving that the map
$$ \GL^{\U}_{n-a}(\bk) \backslash \GL^{\U}_n(\bk) / \GL^{\U}_{n-d}(\bk) \longrightarrow \GL^{\U}_{n+1-a}(\bk) \backslash \GL^{\U}_{n+1}(\bk) / \GL^{\U}_{n+1-d}(\bk) $$ 
 surjects for $n \geq 2d+a$. When $d=0$, the domain and codomain are both singleton sets and the result is immediate, so we may assume $d>0$. Let $B$ be any matrix in $\GL^{\U}_{n+1}(\bk)$. Patzt proved that $\cM^{\VIC}(d)$ has surjectivity degree $\leq 2d$; specifically, he found matrices $A \in \GL_{n+1-a}(\bk)$, $D \in \GL_{n+1-d}(\bk)$, and $C \in \GL_n(\bk)$ so that 
 $$ \begin{bmatrix}  \I_a &  \\  & A\end{bmatrix} B \begin{bmatrix}  \I_d &  \\  & D \end{bmatrix} = \begin{bmatrix}   & 0 \\ C & \vdots \\ & 0     \\  0  \cdots 0 & 1\end{bmatrix}.$$
Our goal is to modify $A$, $D$, and $C$ so that they have determinants in $\U$. Observe that
 {\scriptsize \begin{align*}   
&  \left(  \begin{bmatrix}  1 &&&&  \\ &\ddots&&& \\ &&1&& \\  &&& \det(C)^{-1} & \\ &&&& \det(D)\end{bmatrix} \begin{bmatrix}  \I_a &  \\  & A\end{bmatrix} \right)
B 
\left(  \begin{bmatrix}  \I_d &  \\  & D \end{bmatrix}  \begin{bmatrix}  1 &&&  \\ &\ddots& \\ &&1 \\  &&& \det(D)^{-1} \\  \end{bmatrix}   \right ) \\
& \qquad =  \begin{bmatrix}  1 &&&&  \\ &\ddots&&& \\ &&1&& \\  &&& \det(C)^{-1} & \\ &&&& \det(D) \end{bmatrix}   \begin{bmatrix}   & 0 \\ C & \vdots \\ & 0 \\    \\  0  \cdots 0 & 1\end{bmatrix}  \begin{bmatrix}  1 &&&  \\ &\ddots& \\ &&1 \\  &&& \det(D)^{-1}\\  \end{bmatrix}   \\
& \qquad =  \begin{bmatrix}   & 0 \\ C' & \vdots \\ & 0 \\    \\  0  \cdots 0 & 1\end{bmatrix}  
 \end{align*} }
\hspace{-6pt} where $C'$ is obtained by scaling the bottom row of $C$ by $\det(C)^{-1}$. Thus $C'$ has determinant $1$, and the matrix on the right-hand side of the equation is in the image of $\GL_n^{\U}(\bk)$.  
Since $$n+1-d \geq 1+d+a  \geq 1$$ by assumption, the matrix 
\begin{align*} {\scriptsize 
\left(  \begin{bmatrix}  \I_d &  \\  & D \end{bmatrix}  \begin{bmatrix}  1 &&&  \\ &\ddots& \\ &&1 \\  &&& \det(D)^{-1} \\  \end{bmatrix}   \right ) 
} \end{align*} 
is contained in $\GL_{n+1-d}(\bk)$, and moreover has determinant $1$. Since $$n+1-a \geq 2d+1 \geq 2$$  the matrix 
\begin{align*}  {\scriptsize 
  \left(  \begin{bmatrix}  1 &&&&  \\ &\ddots&&& \\ &&1&& \\  &&& \det(C)^{-1} & \\ &&&& \det(D) \end{bmatrix} \begin{bmatrix}  \I_a &  \\  & A\end{bmatrix} \right)
} \end{align*} 
is contained in $\GL_{n+1-a}(\bk)$, and must have determinant $\det(B)^{-1} \in \U$.  This concludes the proof of the bound on surjectivity degree. 

We next prove the bound on injectivity degree. Let $n \geq 2d+a+1$, and  we will show that the map on double cosets 
$$ \GL^{\U}_{n-a}(\bk) \backslash \GL^{\U}_n(\bk) / \GL^{\U}_{n-d}(\bk) \longrightarrow \GL^{\U}_{n+1-a}(\bk) \backslash \GL^{\U}_{n+1}(\bk) / \GL^{\U}_{n+1-d}(\bk) $$ 
is injective. 
Suppose that $[B]$ and $[C]$ are double cosets in $\GL^{\U}_n(\bk)$ that map to the same double coset in $\GL^{\U}_{n+1}(\bk)$. Since the map on double cosets is surjective for $n \geq 2d+a$, we may assume without loss of generality that $[C]$ is represented by a matrix of the form
$$  {\scriptsize C= \begin{bmatrix} & 0 \\ C' & \vdots \\ & 0 \\  0  \cdots 0 & 1\end{bmatrix} }  \qquad \in \GL_n^{\U}(\bk).$$
 Patzt proved that $\cM^{\VIC}(d)$ has injectivity degree $0$ by exhibiting matrices $A \in \GL_{n-a}(\bk)$ and $D \in \GL_{n-d}(\bk)$ so that 
 $$ \begin{bmatrix}  \I_a &  \\  & A\end{bmatrix} B \begin{bmatrix}  \I_d &  \\  & D \end{bmatrix} = C.$$
Now observe that 
{\scriptsize \begin{align*}  
& \left(  \begin{bmatrix}  1 &&&  \\ &\ddots& \\ &&1 \\  &&& \det(D)\\ \end{bmatrix} \begin{bmatrix}  \I_a &  \\  & A\end{bmatrix}  \right) B \left( \begin{bmatrix}  \I_d &  \\  & D \end{bmatrix}    \begin{bmatrix}  1 &&&  \\ &\ddots& \\ &&1 \\  &&& \det(D)^{-1}\\  \end{bmatrix} \right)    \\
& \qquad \qquad  = \begin{bmatrix}  1 &&&  \\ &\ddots& \\ &&1 \\  &&& \det(D)\\  \end{bmatrix}  \begin{bmatrix}   & 0 \\ C' & \vdots \\ & 0 \\    0  \cdots 0 & 1\end{bmatrix}  \begin{bmatrix}  1 &&&  \\ &\ddots& \\ &&1 \\  &&& \det(D)^{-1}\\  \end{bmatrix}   \\
& \qquad \qquad = \begin{bmatrix}   & 0 \\ C' & \vdots \\ & 0 \\    0  \cdots 0 & 1\end{bmatrix}
\end{align*}} 
where 
\begin{align*}
{\scriptsize  \left( \begin{bmatrix}  \I_d &  \\  & D \end{bmatrix}    \begin{bmatrix}  1 &&&  \\ &\ddots& \\ &&1 \\  &&& \det(D)^{-1}\\  \end{bmatrix} \right) }  \in \GL_{n-d}(\bk) \quad \text{has determinant 1}, 
\end{align*} 
and 
 \begin{align*}
{\scriptsize \left(  \begin{bmatrix}  1 &&&  \\ &\ddots& \\ &&1 \\  &&& \det(D)\\ \end{bmatrix} \begin{bmatrix}  \I_a &  \\  & A\end{bmatrix}  \right) }  \in \GL_{n-a}(\bk) \quad \text{has determinant }\det(C')\det(B)^{-1} \in \U. 
\end{align*} 
Thus $[C]$ and $[B]$ are the same double coset in $\GL^{\U}_{n-a}(\bk) \backslash \GL^{\U}_n(\bk) / \GL^{\U}_{n-d}(\bk)$, and we conclude the bound on injectivity degree.
\end{proof}

 From \autoref{TheoremStabDegVICM(d)} and \autoref{VICUcoset} we will deduce the following results for general $\Cat$--modules. 

\begin{proposition} \label{StabDegSharp} Let $R$ be a field of characteristic zero and $\bk$ a finite field. 
 Any $\Cat$--module $\cA$ over $R$ generated in degree $\leq d$ has surjectivity degree $\leq 2d$.  If $\cA$ is an induced module over $\VIC(\bk)$ or $\SI(\bk)$, then $\cA$ has injectivity degree $\leq 0$. Induced $\VIC^{\U}$--modules generated in degree $\leq d$ have injectivity degree $\leq 2d+1$. 
\end{proposition}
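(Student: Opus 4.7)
The plan is to reduce all three claims to the corresponding statements about the representable modules $\cM(m)$, which are handled by \autoref{TheoremStabDegVICM(d)} and \autoref{VICUcoset}, using \autoref{InducedVsRepresentable} to realize the modules of interest as quotients or submodules of direct sums $\bigoplus_m \cM(m)^{\oplus c_m}$, and \autoref{StabDegQuotientsSubmodules} to transfer the stability degree bounds. The key preliminary observation is that arbitrary (possibly infinite) direct sums preserve bounds on both injectivity and surjectivity degree: since the coinvariants functor $(-)_{\G_{n-a}}$ is a left adjoint, it commutes with direct sums, so the stabilization maps for $\bigoplus_i \cA^{(i)}$ are direct sums of the stabilization maps for each $\cA^{(i)}$, and are therefore injective (respectively surjective) in any degree where every summand's stabilization map is.

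For the surjectivity claim, I would invoke \autoref{InducedVsRepresentable} to realize $\cA$ as a quotient of $\bigoplus_{m=0}^d \cM(m)^{\oplus c_m}$. By \autoref{TheoremStabDegVICM(d)} and \autoref{VICUcoset}, each $\cM(m)$ has surjectivity degree $\leq 2m$; by the preliminary observation, the direct sum has surjectivity degree $\leq \max_{0 \leq m \leq d} 2m = 2d$. Then \autoref{StabDegQuotientsSubmodules} implies $\cA$ has surjectivity degree $\leq 2d$.

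For the injectivity claims, I would use the same strategy but with the \emph{submodule} realization in \autoref{InducedVsRepresentable}. For an induced $\VIC(\bk)$- or $\SI(\bk)$-module $\cM(W)$, \autoref{InducedVsRepresentable} provides an embedding into some $\bigoplus_{m \geq 0} \cM(m)^{\oplus c_m}$; since \autoref{TheoremStabDegVICM(d)} gives each $\cM(m)$ injectivity degree $\leq 0$ with a bound independent of $m$, the direct sum has injectivity degree $\leq 0$, and \autoref{StabDegQuotientsSubmodules} transfers this to $\cM(W)$ without any generation degree hypothesis. For the $\VIC^{\U}(\bk)$ case the generation degree constraint is essential because the representable bounds $2m+1$ grow with $m$: if $\cM(W)$ is generated in degree $\leq d$, then \autoref{InducedVsRepresentable} yields an embedding $\cM(W) \hookrightarrow \bigoplus_{m=0}^{d} \cM(m)^{\oplus c_m}$, which has injectivity degree $\leq \max_{0 \leq m \leq d}(2m+1) = 2d+1$ by \autoref{VICUcoset}, and one concludes by \autoref{StabDegQuotientsSubmodules}.

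There is no substantive obstacle beyond careful bookkeeping; the technical content has already been absorbed into Patzt's double coset calculation and its extension to $\VIC^{\U}$ in \autoref{VICUcoset}. The closest thing to a subtle point is ensuring the direct sum argument survives when the multiplicities $c_m$ are infinite, which is precisely why \autoref{InducedVsRepresentable} was stated with the caveat that the $W_n$ may be infinite-dimensional and why it is important here that coinvariants, as a left adjoint, commutes with arbitrary colimits.
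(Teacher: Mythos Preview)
Your proof is correct and follows essentially the same approach as the paper's: realize $\cA$ as a quotient (for surjectivity) or $\cM(W)$ as a submodule (for injectivity) of a direct sum of representables via \autoref{InducedVsRepresentable}, invoke the bounds from \autoref{TheoremStabDegVICM(d)} and \autoref{VICUcoset}, and transfer them using \autoref{StabDegQuotientsSubmodules}. Your version is slightly more explicit than the paper's in spelling out why arbitrary direct sums preserve injectivity and surjectivity degree bounds, and in noting that the uniform injectivity bound $\leq 0$ in the $\VIC(\bk)$ and $\SI(\bk)$ cases is what allows the conclusion there without a generation-degree hypothesis.
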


\begin{proof} Since by \autoref{InducedVsRepresentable} any $\Cat$--module generated in degree $\leq d$ can be realized as a quotient of a direct sum of $\Cat$--modules $\cM(m)$ with $m \leq d$, the result follows from  \autoref{TheoremStabDegVICM(d)}, \autoref{VICUcoset},  and   \autoref{StabDegQuotientsSubmodules}. If  $\cA$ is the $\Cat$--module  $\cM(W)$ for some $\CB$--module $W=\{W_n\}$, then by \autoref{InducedVsRepresentable}  we can realize $\cA$ as a submodule of a direct sum of representable $\Cat$--modules $\cM(m)$, and the result again follows from  \autoref{TheoremStabDegVICM(d)}, \autoref{VICUcoset},  and \autoref{StabDegQuotientsSubmodules}. 
\end{proof}

The following result shows that the stability degree of a general $\Cat$--module is controlled by its presentation degree. 

\begin{proposition} \label{boundingstabdeg} Let $\bk$ be a finite field, and let $R$ be a field of characteristic zero. Let $\Cat$ be $\VIC(\bk)$ or $\SI(\bk)$, and suppose that $\cA$ is a $\Cat$--module over $R$ with generation degree $\leq d$ and relation degree $\leq r$. Then $\cA$ has stability degree $\leq \max(2r,2d)$. If  $\Cat$ is $\VIC^{\U}(\bk)$, and $\cA$ is a $\Cat$--module over $R$ with generation degree $\leq d$ and relation degree $\leq r$, then $\cA$ has stability degree $\leq \max(2r,2d+1)$.
\end{proposition}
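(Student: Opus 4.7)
The plan is to build a two-step presentation of $\cA$ by induced $\Cat$--modules whose stability bounds are controlled by \autoref{StabDegSharp}, and then propagate those bounds to $\cA$ using the exactness of coinvariants through \autoref{StabDegKernelsCokernels}.

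First, since $\cA$ has generation degree $\leq d$, by \autoref{RemarkIndSurjects} and \autoref{InducedVsRepresentable} we can pick an induced module $\cM(W_0)$ that is generated in degree $\leq d$ together with a surjection $\cM(W_0) \twoheadrightarrow \cA$. Let $\cK$ be the kernel; by the definition of relation degree we can choose a surjection $\cM(W_1) \twoheadrightarrow \cK$ with $\cM(W_1)$ generated in degree $\leq r$. Composing gives a map $f: \cM(W_1) \to \cM(W_0)$ with $\cA \cong \coker f$.

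Next, I invoke \autoref{StabDegSharp} to bound the relevant invariants of $\cM(W_0)$ and $\cM(W_1)$. In the $\VIC(\bk)$ and $\SI(\bk)$ cases, both induced modules have injectivity degree $\leq 0$, while their surjectivity degrees are bounded by $2d$ and $2r$ respectively. Applying \autoref{StabDegKernelsCokernels} to $f$, I get
$$\mathrm{SurjDeg}(\cA) \leq \mathrm{SurjDeg}(\cM(W_0)) \leq 2d,$$
$$\mathrm{InjDeg}(\cA) \leq \max\bigl(\mathrm{SurjDeg}(\cM(W_1)),\, \mathrm{InjDeg}(\cM(W_0))\bigr) \leq \max(2r, 0) = 2r,$$
so the stability degree of $\cA$ is at most $\max(2d, 2r)$, as desired.

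For the $\VIC^{\U}(\bk)$ case the only change is that \autoref{StabDegSharp} now gives $\mathrm{InjDeg}(\cM(W_0)) \leq 2d+1$ rather than $0$. The same applications of \autoref{StabDegKernelsCokernels} yield $\mathrm{SurjDeg}(\cA) \leq 2d$ and $\mathrm{InjDeg}(\cA) \leq \max(2r, 2d+1)$, so the stability degree is at most $\max(2r, 2d+1)$. There is no real obstacle here beyond correctly identifying the injectivity bound for induced $\VIC^{\U}$--modules; all of the diagrammatic work has already been encapsulated in \autoref{StabDegKernelsCokernels}, so the proof is essentially a short bookkeeping exercise once the two-term resolution is set up.
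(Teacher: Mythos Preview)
Your proof is correct and follows essentially the same approach as the paper: build a two-term resolution by induced modules from the presentation data, apply \autoref{StabDegSharp} to bound the injectivity and surjectivity degrees of the induced terms, and then use \autoref{StabDegKernelsCokernels} to read off the bounds for $\cA$ as a cokernel. The paper's proof is terser (it simply invokes \autoref{StabDegKernelsCokernels} without spelling out the cokernel bounds), but the substance is identical.
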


\begin{proof}
By assumption, there exists a  partial resolution of $\cA$ by  induced $\Cat$--modules
$$ \cM^1 \longrightarrow \cM^0 \longrightarrow \cA$$
with $\cM^1$ generated in degree $\leq r$ and $\cM^0$ generated in degree $\leq d$.  When $\Cat$ is $\VIC(\bk)$ or $\SI(\bk)$,  $\cM^1$ and $\cM^0$ have injectivity degree $\leq 0$ and surjectivities degrees $\leq 2r$ and $\leq 2d$, respectively, by \autoref{StabDegSharp}. When $\Cat$ is $\VIC^{\U}(\bk)$, then by \autoref{StabDegSharp}, $\cM^1$ has surjectivity degree $\leq 2r$ and injectivity degree $\leq 2r+1$, while $\cM^0$ has surjectivity degree $\leq 2d$ and injectivity degree $\leq 2d+1$. The the result follows from  \autoref{StabDegKernelsCokernels}. 
\end{proof}

We will use the following variation of \autoref{boundingstabdeg} in the proofs of  \autoref{MCGstab} and \autoref{AutFnStab}.

\begin{proposition} \label{BoundingCoinvariants} Let $\bk$ be a finite field, and let $R$ be a field of characteristic zero. Suppose that $\cA$ is a $\Cat$--module with generation degree $\leq d$ and relation degree $\leq r$. Then the induced maps on coinvariants $$ \left(\cA_n\right)_{\G_n} \to \left(\cA_{n+1}\right)_{\G_{n+1}}$$ 
 surject for $n \geq d$ and inject for $n \geq r$. In particular these induced maps are isomorphisms for all $n \geq \max(d,r)$. 
\end{proposition}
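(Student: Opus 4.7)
The plan is to take $\G_n$-coinvariants of a length-one resolution of $\cA$ by induced modules and run a short diagram chase. By \autoref{InducedVsRepresentable} combined with the generation and relation degree hypotheses, we may present $\cA$ as
\[
\cM^1 \xrightarrow{\phi} \cM^0 \xrightarrow{\psi} \cA \to 0
\]
where $\cM^0 = \bigoplus_i \cM(d_i)$ with $d_i \leq d$ and $\cM^1 = \bigoplus_j \cM(r_j)$ with $r_j \leq r$. Applying $(-)_{\G_n}$ (exact on $R[\G_n]$-modules because $R$ has characteristic zero and $\G_n$ is finite) in degrees $n$ and $n+1$ and using naturality of the transition maps produces a commutative diagram with exact rows relating $(\cM^1_n)_{\G_n}, (\cM^0_n)_{\G_n}, (\cA_n)_{\G_n}$ to their degree-$(n{+}1)$ counterparts; denote the three vertical transition maps $\alpha, \beta, \gamma$.

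The main ingredient is an elementary computation of $(\cM(m)_n)_{\G_n}$ for a single representable. Since $\cM(m)_n \cong R[\G_n/\G_{n-m}]$ is a transitive $\G_n$-set for $n \geq m$ and is zero for $n < m$, its coinvariants equal $R$ for $n \geq m$ and $0$ otherwise. The transition $\cM(m)_n \to \cM(m)_{n+1}$ carries the unique $\G_n$-orbit into the unique $\G_{n+1}$-orbit, so on coinvariants it becomes the identity $R \to R$ for $n \geq m$. Summing over summands, $\beta$ is injective for every $n \geq 0$ (each summand is either $0 \to 0$, $0 \to R$, or $R \xrightarrow{=} R$) and is surjective exactly when $n \geq d_i$ for all $i$, i.e.\ for $n \geq d$; similarly $\alpha$ is surjective for $n \geq r$.

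Surjectivity of $\gamma$ for $n \geq d$ is then immediate from surjectivity of $\beta$ and the right square. Injectivity of $\gamma$ for $n \geq r$ is a one-step diagram chase: given $y \in \ker \gamma$, lift it to $x \in (\cM^0_n)_{\G_n}$; since $\gamma(y)=0$, the element $\beta(x)$ lifts to some $w' \in (\cM^1_{n+1})_{\G_{n+1}}$; surjectivity of $\alpha$ further lifts $w'$ to $w \in (\cM^1_n)_{\G_n}$; injectivity of $\beta$ forces $x = \phi_\ast(w)$, whence $y = \psi_\ast(x) = 0$. Combining the two bounds yields the ``in particular'' isomorphism claim for $n \geq \max(d,r)$.

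I do not anticipate a serious obstacle: the only substantive content is the coinvariant calculation for a single representable $\cM(m)$, which follows directly from transitivity of the $\G_n$-action on $\G_n/\G_{n-m}$, and the rest is a standard diagram chase. It is worth noting that the sharper bounds $n \geq d$ and $n \geq r$ (rather than the $n \geq 2d$, $n \geq 2r$ that \autoref{boundingstabdeg} would give for the $a=0$ stability degree) arise precisely because the orbit counts for the full coinvariants of $\cM(m)$ collapse to a single copy of $R$ as soon as $n \geq m$, which is strictly stronger than what the general $\G_{n-a}$-coinvariant analysis provides.
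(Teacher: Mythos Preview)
Your approach is essentially the paper's, and both the coinvariant computation for a single representable and the diagram chase are correct. There is, however, a genuine gap in your setup. You assert that \autoref{InducedVsRepresentable} combined with the hypotheses yields a presentation with $\cM^0 = \bigoplus_i \cM(d_i)$, $d_i \le d$, \emph{and} $\cM^1 = \bigoplus_j \cM(r_j)$, $r_j \le r$. But the definition of relation degree only guarantees a surjection $\cM(W^0) \twoheadrightarrow \cA$ from some induced module (with $W^0$ supported in degree $\le d$) whose kernel is generated in degree $\le r$. If you then replace $\cM(W^0)$ by a direct sum of representables, the new kernel picks up an extra piece $\ker\big(\bigoplus_i \cM(d_i) \to \cM(W^0)\big)$, which is generated in degree $\le d$ but not in general $\le r$. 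When $r < d$ (for instance when $\cA$ is itself induced, so one may take $r=0$), your argument then only yields injectivity of $\gamma$ for $n \ge d$, strictly weaker than the stated bound $n \ge r$.

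The paper closes this gap by inserting an intermediate step: it first shows that for an arbitrary induced module $\cM(W)$ the coinvariant transition maps inject for all $n \ge 0$ and surject once $n$ exceeds the support of $W$, using that $\cM(W)$ is simultaneously a submodule and a quotient of a direct sum of representables (\autoref{InducedVsRepresentable}) together with the argument of \autoref{StabDegQuotientsSubmodules} at $a=0$. It then runs your diagram chase with $\cM^0$ and $\cM^1$ taken to be the original induced modules from the definition of presentation degree, not direct sums of representables. Equivalently, you could bypass the issue by computing directly that $(\cM(W)_n)_{\G_n} \cong (W_m)_{\G_m}$ for $n \ge m$ from the induction formula, and checking the transition map is the identity; with that in hand your diagram chase goes through verbatim using the given induced presentation.
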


\begin{proof} Suppose first $\cA$ is the representable $\Cat$--module $\cM(d)$. Then (as in \autoref{TheoremStabDegVICM(d)} and \autoref{VICUcoset})  a basis for the coinvariants $(\cA_n)_{\G_n}$ is given by the double cosets $ \G_n \backslash \G_n / \G_{n-d}$; these double cosets are empty for $n<d$ and a singleton set for $n \geq d$. Hence the maps $$ \left( \cM(d)_n \right)_{\G_n} \to \left(\cM(d)_{n+1}\right)_{\G_{n+1}} $$ inject for all $n \geq 0$ and surject for $n \geq d$.   

Next suppose that $\cA$ is an induced $\Cat$--module $\cM(W)$ with $W$ supported in degree $\leq d$. By \autoref{InducedVsRepresentable}  we can realize $\cA$ as both a quotient and a submodule of $\Cat$--modules of the form $ \bigoplus_{m=0}^d \cM(m)^{\oplus c_m}$.  Then by combining our results on $\cM(d)$ with the proof of \autoref{StabDegQuotientsSubmodules} in the special case $a=0$, we find that the maps on coinvariants
$$ \left( \cM(W)_n \right)_{\G_n} \to \left(\cM(W)_{n+1}\right)_{\G_{n+1}} $$
also must inject for all $n \geq 0$ and surject for $n \geq d$.   

Now consider a general $\Cat$--module $\cA$ that has a partial resolution by  induced $\Cat$--modules
$$ \cM^1 \longrightarrow \cM^0 \longrightarrow \cA$$
with $\cM^1$ generated in degree $\leq r$ and $\cM^0$ generated in degree $\leq d$.  By applying the proof of \autoref{StabDegSharp} in the special case that $a=0$, we find that the maps $$ \left(\cA_n\right)_{\G_n} \to \left(\cA_{n+1}\right)_{\G_{n+1}}$$ must inject for $n \geq r$ and surject for $n \geq d$, as claimed. 
\end{proof}

\subsection{Bounding syzygies of $\Cat$--modules over characteristic zero}

In this subsection, we will bound the degrees of the modules of higher syzygies of $\Cat$--modules presented in finite degree.

\begin{proposition}  \label{StabDegAndWeightBoundGenDeg} Let $\bk$ be a finite commutative ring, let $R$ be a field of characteristic zero, and let $\cA$ be a $\Cat$--module over $R$ of weight $\leq d$ and stability degree $\leq s$. Then $\cA$ is generated in degree $\leq (s+d)$. 
\end{proposition}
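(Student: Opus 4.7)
The plan is to combine the generation-degree criterion of \autoref{RemarkIndSurjects} with the vanishing criterion of \autoref{LemmaCoinvariantsVanish}, exploiting the fact that the stabilization map appearing in the definition of stability degree factors canonically through induction.

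By condition (c) of \autoref{RemarkIndSurjects}, it suffices to show that the map $\Ind_{\G_n}^{\G_{n+1}} \cA_n \to \cA_{n+1}$ is surjective for every $n \geq s+d$. I would let $Q_n$ denote its cokernel; as a quotient of $\cA_{n+1}$, it is in particular a subquotient of the $\G_{n+1}$--representation $\cA_{n+1}$. Since $\cA$ has weight $\leq d$, \autoref{LemmaCoinvariantsVanish} reduces the vanishing $Q_n = 0$ to the vanishing $(Q_n)_{\G_{n+1-d}} = 0$. Because $R$ has characteristic zero and $\G_{n+1-d}$ is finite, taking $\G_{n+1-d}$--coinvariants is exact, and so $(Q_n)_{\G_{n+1-d}}$ is the cokernel of
\[ (\Ind_{\G_n}^{\G_{n+1}} \cA_n)_{\G_{n+1-d}} \longrightarrow (\cA_{n+1})_{\G_{n+1-d}}. \]
It is therefore enough to prove this coinvariants map is surjective for all $n \geq s+d$.

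The key point is that this map receives the stabilization map of the stability-degree definition. Indeed, the unit of the $(\Ind,\Res)$--adjunction gives a $\G_n$--equivariant map $\cA_n \to \Ind_{\G_n}^{\G_{n+1}} \cA_n$, $x \mapsto 1 \otimes x$, and its composite with the evaluation counit $\Ind_{\G_n}^{\G_{n+1}} \cA_n \to \cA_{n+1}$ recovers the structure map $\cA_n \to \cA_{n+1}$ of the $\Cat$--module. Using the inclusions $\G_{n-d} \subseteq \G_n$ and $\G_{n-d} \subseteq \G_{n+1-d} \subseteq \G_{n+1}$ coming from the fixed morphism $\bk^n \hookrightarrow \bk^{n+1}$, I then pass to $\G_{n-d}$--coinvariants on $\cA_n$ and $\G_{n+1-d}$--coinvariants on the other two terms, producing a factorization
\[ (\cA_n)_{\G_{n-d}} \longrightarrow (\Ind_{\G_n}^{\G_{n+1}} \cA_n)_{\G_{n+1-d}} \longrightarrow (\cA_{n+1})_{\G_{n+1-d}} \]
of the canonical stabilization map. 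Taking $a=d$ in the stability-degree hypothesis, this composite is an isomorphism for $n \geq s+d$; in particular it is surjective, which forces the second arrow to be surjective and completes the argument.

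The main point requiring care is the factorization step, namely identifying the composite of the induction unit followed by evaluation (after the appropriate coinvariants) with the stabilization map used in the definition of stability degree. This is essentially bookkeeping---checking that the chosen morphism $\bk^n \hookrightarrow \bk^{n+1}$ is compatible with the nested subgroup inclusions $\G_{n-d} \subseteq \G_{n+1-d} \subseteq \G_{n+1}$ and with the particular structure maps implicit in the stability-degree definition---but it is the one step where the abstract functorial picture must be reconciled with the concrete definitions made in \autoref{SectionCModules}.
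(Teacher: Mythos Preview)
Your proof is correct and follows essentially the same approach as the paper's own proof: both reduce to showing the induction map $\Ind_{\G_n}^{\G_{n+1}}\cA_n \to \cA_{n+1}$ surjects, factor the stabilization map through it on coinvariants, and then invoke \autoref{LemmaCoinvariantsVanish} with $a=d$. Your treatment of the factorization via the $(\Ind,\Res)$--adjunction is slightly more explicit than the paper's, which simply asserts the factorization; otherwise the arguments are identical.
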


The following proof uses methods similar to those used by Church--Ellenberg--Farb \cite[Proposition 3.3.3]{CEF} to show that bounds on weight and stability degree of an FI--module imply a form of multiplicity stability. 

\begin{proof}
By \autoref{RemarkIndSurjects}, proving that $\cA$ is generated in degree at most $(s+d)$ is equivalent to showing that the induced map  
$$I_n: \mathrm{Ind}_{\G_n}^{\G_{n+1} }\cA_n \to \cA_{n+1} \qquad \text{ surjects for }n \geq s+d.$$ 
Let $C_{n+1}$ denote the cokernel of this map; our objective is to show that $C_{n+1}=0$ for $n \geq s+d$.

Recall the definition of stability degree $\leq s$: for each $a \geq 0$, $$(\cA_n)_{\G_{n-a}} \overset{\cong}\longrightarrow (\cA_{n+1})_{\G_{n+1-a}} \qquad \text{for all }n \geq s+a.$$ This map of coinvariants factors as follows, 
$$ (\cA_n)_{\G_{n-a}} \longrightarrow \left( \mathrm{Ind}_{\G_n}^{\G_{n+1} }\cA_n \right)_{\G_{n+1-a}}  \overset{(I_n)_*}\longrightarrow \left( \cA_{n+1} \right)_{\G_{n+1-a}}.$$
Since this composite map surjects for $n \geq s+a$ by assumption, it follows that the map 
$$\left( \mathrm{Ind}_{\G_n}^{\G_{n+1} }\cA_n \right)_{\G_{n+1-a}}  \overset{(I_n)_*}\longrightarrow \left( \cA_{n+1} \right)_{\G_{n+1-a}} $$ 
surjects once $n \geq s+a$, and its cokernel vanishes for any $a \geq 0$. Taking coinvariants is right exact, so this cokernel is  $(C_{n+1})_{\GL_{n+1-a}}.$

Set $a=d$. By \autoref{LemmaCoinvariantsVanish}, since $C_{n+1}$ is a quotient of  $\cA_{n+1}$ and $\cA$ has weight $\leq d$, the vanishing of $(C_{n+1})_{\G_{n+1-d}}$ for $n \geq s+d$ ensures the vanishing of $C_{n+1}$ for $n \geq s+d$. We conclude that $\cA$ is generated in degree  $ \leq (s+d)$.
\end{proof}

\begin{theorem} \label{ThmBoundedResolution} Let $\Cat$ be $\SI(\bk)$ or $\VIC(\bk)$. Let $\bk$ be a finite field, and let $R$ be a field of characteristic zero. Let $\cA$ be a $\Cat$--module over $R$ with generation degree $\leq d$ and relation degree $\leq r$. Then there exists a resolution of $\cA$ by  induced modules $\cM^k$
$$ \longrightarrow \cM^k \longrightarrow \cdots \longrightarrow \cM^2 \longrightarrow \cM^1 \longrightarrow \cM^0 \longrightarrow \cA$$ 
where $\cM^0$ is generated in degree $\leq d$, and for $k\geq1$,  $\cM^k$ is generated in degree $\leq 3^{k-1}r$. 

\end{theorem}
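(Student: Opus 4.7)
The plan is to construct the resolution inductively, tracking the generation degree, injectivity degree, and weight of the kernel at each stage. By \autoref{InducedVsRepresentable}, I can pick an induced $\cM^0$ generated in degree $\leq d$ surjecting onto $\cA$; let $K_0 := \ker(\cM^0 \twoheadrightarrow \cA)$. The relation degree hypothesis gives $K_0$ generated in degree $\leq r$, so I choose an induced $\cM^1$ generated in degree $\leq r = 3^0 r$ surjecting onto $K_0$. In general, setting $K_k := \ker(\cM^k \to \cM^{k-1})$ for $k \geq 1$, I will show by induction that $K_k$ is generated in degree $\leq 3^k r$; choosing $\cM^{k+1}$ induced and generated in degree $\leq 3^k r = 3^{(k+1)-1}r$ surjecting onto $K_k$ (again via \autoref{InducedVsRepresentable}) then produces the next stage.

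The strengthened inductive hypothesis I would maintain is that $K_k$ has weight $\leq 3^{k-1}r$, injectivity degree $\leq 0$, and surjectivity degree $\leq 2\cdot 3^{k-1}r$. Granting this, \autoref{StabDegAndWeightBoundGenDeg} immediately yields the generation degree bound $3^{k-1}r + 2\cdot 3^{k-1}r = 3^k r$. To establish the hypothesis at stage $k$, I would use that $\cM^k$, being an induced module generated in degree $\leq 3^{k-1}r$ in the category $\VIC(\bk)$ or $\SI(\bk)$, has weight $\leq 3^{k-1}r$ (by \autoref{RemarkWeightGeneration}), injectivity degree $\leq 0$, and surjectivity degree $\leq 2\cdot 3^{k-1}r$ (both by \autoref{StabDegSharp}). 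Since $K_k$ is a submodule of $\cM^k$, its weight is bounded by $3^{k-1}r$ via \autoref{RemarkWeightSubquotients}. Applying \autoref{StabDegKernelsCokernels} to the surjection $\cM^k \twoheadrightarrow K_{k-1}$ (with $K_{-1} := \cA$) yields
\[\mathrm{InjDeg}(K_k) \leq \mathrm{InjDeg}(\cM^k) \leq 0\]
and
\[\mathrm{SurjDeg}(K_k) \leq \max\bigl(\mathrm{SurjDeg}(\cM^k),\ \mathrm{InjDeg}(K_{k-1})\bigr) \leq 2\cdot 3^{k-1}r,\]
where I use the inductive hypothesis $\mathrm{InjDeg}(K_{k-1}) \leq 0$; the base case reduces to observing that $K_0 \subseteq \cM^0$ inherits injectivity degree $\leq 0$ from $\cM^0$ via \autoref{StabDegQuotientsSubmodules}.

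The main obstacle is really the bookkeeping: one has to carry the injectivity degree bound through the induction, because the recursion on the surjectivity degree of the kernels depends on the injectivity degree of the previous kernel via \autoref{StabDegKernelsCokernels}. The argument works cleanly in the categories $\VIC(\bk)$ and $\SI(\bk)$ precisely because induced modules in these categories have injectivity degree $\leq 0$; the analogous argument for $\VIC^{\U}(\bk)$ would require a separate treatment since induced $\VIC^{\U}$-modules only have injectivity degree $\leq 2d+1$, which introduces an extra additive term that propagates through the recursion (presumably accounting for the shift $\max(r,d)+\tfrac12$ visible in \autoref{regVIC}).
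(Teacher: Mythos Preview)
Your proof is correct and follows essentially the same inductive argument as the paper, tracking weight, injectivity degree, and surjectivity degree of the successive kernels and invoking \autoref{StabDegAndWeightBoundGenDeg} to bound generation degree. The only cosmetic difference is that the paper applies \autoref{StabDegKernelsCokernels} to the map $\cM^k \to \cM^{k-1}$ directly (using that the induced module $\cM^{k-1}$ has injectivity degree $\leq 0$), whereas you apply it to the surjection $\cM^k \twoheadrightarrow K_{k-1}$ and carry $\mathrm{InjDeg}(K_{k-1}) \leq 0$ through the induction; both yield the same bound.
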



\begin{proof}

By assumption we have a short exact sequence $ 0 \longrightarrow \cK^0 \longrightarrow \cM^0 \longrightarrow \cA$ with $\cM^0$ an induced $\Cat$--module generated in degree $\leq d$ and the kernel $\cK^0$ generated in degree $\leq r$. So we can extend the resolution by constructing a map $\cM^1 \twoheadrightarrow \cK^0$ where $\cM^1$ is an induced $\Cat$--module generated in degree $\leq r$. 

We proceed by strong induction. Suppose  we have an exact sequence
$$ \cdots \twoheadrightarrow \cK^i \hookrightarrow \cM^i \twoheadrightarrow \cdots \twoheadrightarrow \cK^2 \hookrightarrow \cM^2 \twoheadrightarrow \cK^1 \hookrightarrow \cM^1 \twoheadrightarrow \cK^0 \hookrightarrow \cM^0 \twoheadrightarrow \cA$$ 
where $\cM^{i}$ is an induced $\Cat$--module generated in degree $\leq 3^{i-1}r$ for $i\leq k$. In particular $\cM^k$ is generated in degree $\leq 3^{k-1}r$, so it has weight $\leq 3^{k-1}r$ by \autoref{RemarkWeightGeneration} and injectivity degree $0$ by \autoref{StabDegSharp}. The kernel $$ \cK^k \hookrightarrow \cM^k \longrightarrow \cM^{k-1},$$ being a submodule of $\cM^k$, has weight $\leq 3^{k-1}r$ by \autoref{RemarkWeightSubquotients} and injectivity degree $0$ by \autoref{StabDegKernelsCokernels}. The module $\cM^k$ has stability degree $\leq (2)3^{k-1}r$ by  \autoref{StabDegSharp}, so by  \autoref{StabDegKernelsCokernels} the kernel $\cK^k$ has surjectivity degree $\leq (2)3^{k-1}r$. Then by  \autoref{StabDegAndWeightBoundGenDeg} the kernel $\cK^k$ is generated in degree $$\leq (2)3^{k-1}r + 3^{k-1}r = 3^kr.$$ This implies that we may choose $\cM^{k+1}$ to be an induced $\Cat$--module generated in degree $\leq 3^kr$, which concludes the inductive step. The resulting resolution is shown in  \autoref{TableVICResolution}.
\begin{figure}[h!]
\begin{center} { \tiny \begin{tikzpicture}
  \matrix (m) [matrix of math nodes,row sep=.6em,column sep=1.5em,minimum width=2em]
  {  \qquad \cdots  & \cM^{k+1} & \cK^k &\cM^k & \cdots & \cK^3 & \cM^3 & \cK^2 & \cM^2 & \cK^1& \cM^1 & \cK^0& \cM^0 & \cA \\
\mathrm{gen. deg.}\leq & 3^k r & 3^k r &3^{k-1}r & \cdots & 27r& 9r & 9r & 3r & 3r& r & r& d&   \\
\mathrm{weight} \leq  & 3^k r & 3^{k-1}r &3^{k-1}r & \cdots & 9r& 9r & 3r & 3r & r& r & d& d&  \\
\mathrm{inj. deg.} \leq & 0&0 &0 & \cdots & 0 & 0 & 0 &0 & 0& 0 & 0& 0 &   \\ 
\mathrm{surj. deg.} \leq & (2)3^k r & (2)3^{k-1}r &(2)3^{k-1}r & \cdots &18r & 18r & 6r & 6r & 2r& 2r & 2r& 2d &   \\};
 \path[right hook->] 
(m-1-1) edge  (m-1-2)
(m-1-3) edge  (m-1-4)
(m-1-6) edge  (m-1-7)
(m-1-8) edge  (m-1-9)
(m-1-10) edge  (m-1-11)
(m-1-12) edge  (m-1-13)
;
 \path[->>]  
(m-1-2) edge  (m-1-3)
(m-1-4) edge  (m-1-5)
(m-1-5) edge  (m-1-6)
(m-1-7) edge  (m-1-8)
(m-1-9) edge  (m-1-10)
(m-1-11) edge  (m-1-12)
(m-1-13) edge  (m-1-14)
;
\end{tikzpicture} } \end{center} 
\caption{Bounds on the syzygies of a $\VIC(\bk)$ or $\SI(\bk)$--module $\cA$ presented in finite degree. } \label{TableVICResolution} 
 \end{figure}
\end{proof}

\begin{remark} The same inductive argument given for  \autoref{ThmBoundedResolution} can also be used to show that if $\cA$ is generated in degree $\leq d$ and has injectivity degree $\leq s$, then we can construct a resolution of $\cA$ by induced $\Cat$--modules with $\cM^k$ generated in degree $\leq \max\left(3^k d, 3^{k-1}(s+d)\right)$.
\end{remark}

\begin{theorem} \label{ThmBoundedResolutionVICU} Let $\Cat$ be $\VIC^{\U}(\bk)$. Let $\bk$ be a finite field, and let $R$ be a field of characteristic zero. Let $\cA$ be a $\Cat$--module over $R$ with generation degree $\leq d$ and relation degree $\leq r$. Then there exists a resolution of $\cA$ by  induced modules $\cM^k$
$$ \longrightarrow \cM^k \longrightarrow \cdots \longrightarrow \cM^2 \longrightarrow \cM^1 \longrightarrow \cM^0 \longrightarrow \cA$$ 
where $\cM^0$ is generated in degree $\leq d$, $\cM^1$ is generated in degree $\leq r$, and for $k\geq 2$,  $\cM^k$ is generated in degree $\leq (2)3^{k-2}\max(r,d) +  3^{k-2}r+\frac12(3^{k-1}-1) $.
\end{theorem}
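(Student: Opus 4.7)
The plan is to mimic the inductive construction in the proof of \autoref{ThmBoundedResolution}, the only substantive difference being that \autoref{StabDegSharp} provides an injectivity degree bound of $2g+1$ (rather than $0$) for a $\VIC^{\U}(\bk)$-induced module generated in degree $\leq g$. This extra slack of $2g+1$ at each step is what produces the modified recursion, and so produces the expression $(2)3^{k-2}\max(r,d) + 3^{k-2}r + \frac{1}{2}(3^{k-1}-1)$ in place of $3^{k-1}r$.

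First, from the hypotheses on $\cA$ we take $\cM^0$ to be an induced $\Cat$-module generated in degree $\leq d$ with kernel $\cK^0$ generated in degree $\leq r$, and let $\cM^1$ be an induced $\Cat$-module generated in degree $\leq r$ surjecting onto $\cK^0$. We then proceed by strong induction: having constructed the resolution through $\cM^k$ with $\cM^k$ induced and generated in degree $\leq g_k$, we bound the generation degree of $\cK^k := \ker(\cM^k \twoheadrightarrow \cK^{k-1})$ and take $\cM^{k+1}$ to be any induced module of the appropriate generation degree mapping onto $\cK^k$.

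For the key bound, I would assemble the following ingredients, each of which is already available in the paper: (a) by \autoref{RemarkWeightSubquotients} and \autoref{RemarkWeightGeneration}, $\cK^k$ has weight $\leq g_k$; (b) by \autoref{StabDegSharp}, $\cM^k$ has surjectivity degree $\leq 2g_k$ and $\cM^{k-1}$ (hence its submodule $\cK^{k-1}$, by \autoref{StabDegQuotientsSubmodules}) has injectivity degree $\leq 2g_{k-1}+1$; (c) applying \autoref{StabDegKernelsCokernels} to the short exact sequence $0 \to \cK^k \to \cM^k \to \cK^{k-1} \to 0$ gives
\[
\mathrm{SurjDeg}(\cK^k) \;\leq\; \max\bigl(2g_k,\; 2g_{k-1}+1\bigr).
\]
Finally, \autoref{StabDegAndWeightBoundGenDeg} converts weight and surjectivity degree into a bound on generation degree:
\[
g_{k+1} \;\leq\; \mathrm{SurjDeg}(\cK^k) + \mathrm{weight}(\cK^k) \;\leq\; \max\bigl(2g_k,\; 2g_{k-1}+1\bigr) + g_k.
\]
For $k=1$ this gives $g_2 \leq \max(3r,\, 2d+r+1) \leq 2\max(r,d)+r+1$, matching the stated bound $G_2$. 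For $k\geq 2$ one checks that the stated $G_k$ satisfies $2G_k \geq 2G_{k-1}+1$, so the recursion collapses to $g_{k+1}\leq 3G_k$, and a direct computation shows $3G_k \leq G_{k+1}$, closing the induction.

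The only subtlety, and the main obstacle, is the $k=1$ step, where one must check separately that the formula $G_2 = 2\max(r,d)+r+1$ absorbs both the $\max(2r,2d+1)+r$ coming from the surjectivity bound and serves as a valid base case for the recursion $G_{k+1}=3G_k+1$. After that, the induction proceeds uniformly for all $k\geq 2$ exactly as in \autoref{ThmBoundedResolution}, just with the shifted constants, and the resolution can be displayed in a table analogous to \autoref{TableVICResolution}.
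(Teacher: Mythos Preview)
Your proposal is correct and follows essentially the same approach as the paper: both mimic the induction of \autoref{ThmBoundedResolution}, replacing the injectivity bound $0$ for induced modules by the bound $2g+1$ coming from \autoref{StabDegSharp} (via \autoref{VICUcoset}). The paper organizes the resulting bounds into two separate tables according to whether $d\leq r$ or $d>r$, whereas you unify the two cases into a single recursion $g_{k+1}\leq \max(2g_k,\,2g_{k-1}+1)+g_k$ and verify directly that $G_k$ satisfies $G_{k+1}=3G_k+1$; this is a purely cosmetic difference.
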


\begin{proof} The proof proceeds by the same argument as \autoref{ThmBoundedResolution}, using the bounds in \autoref{VICUcoset} in place of \autoref{TheoremStabDegVICM(d)}. In the case that $d \leq r$, these bounds are shown in \autoref{TableVICUResolution1}. 
 \begin{figure}[h!]
        \centering
       \begin{adjustbox}{max width=\textwidth} 
\begin{tikzpicture} 
  \matrix (m) [matrix of math nodes,row sep=.6em,column sep=1.5em,minimum width=2em]
  {  \qquad \cdots  & \cM^{k+1} & \cK^k &\cM^k & \cdots & \cK^3 & \cM^3 & \cK^2 & \cM^2 & \cK^1& \cM^1 & \cK^0& \cM^0 & \cA \\
\mathrm{gen. deg.}\leq & 3^{k}r +\frac12(3^{k}-1) & 3^{k}r +\frac12(3^{k}-1) &3^{k-1}r +\frac12(3^{k-1}-1)& \cdots & 27r+13& 9r+4 & 9r+4 & 3r+1 & 3r+1& r & r& d&   \\
\mathrm{weight} \leq  & 3^{k}r +\frac12(3^{k}-1) & 3^{k-1}r +\frac12(3^{k-1}-1) &3^{k-1}r +\frac12(3^{k-1}-1) & \cdots & 9r+4 & 9r+4 & 3r+1 & 3r+1 & r& r & d& d&  \\
\mathrm{inj. deg.} \leq & (2)3^{k}r +3^{k} &(2)3^{k-1}r +3^{k-1} &(2)3^{k-1}r +3^{k-1} & \cdots & 18r+9 & 18r+9 & 6r+3 &6r+3 & 2r+1& 2r+1 & 2d+1& 2d+1 &   \\ 
\mathrm{surj. deg.} \leq & (2)3^{k}r +(3^{k}-1) & (2)3^{k-1}r +(3^{k-1}-1) &(2)3^{k-1}r +(3^{k-1}-1)& \cdots &18r+8 & 18r+8 & 6r+2 & 6r+2 & \max(2r, 2d+1)& 2r & 2r& 2d &   \\};
 \path[right hook->] 
(m-1-1) edge  (m-1-2)
(m-1-3) edge  (m-1-4)
(m-1-6) edge  (m-1-7)
(m-1-8) edge  (m-1-9)
(m-1-10) edge  (m-1-11)
(m-1-12) edge  (m-1-13)
;
 \path[->>]  
(m-1-2) edge  (m-1-3)
(m-1-4) edge  (m-1-5)
(m-1-5) edge  (m-1-6)
(m-1-7) edge  (m-1-8)
(m-1-9) edge  (m-1-10)
(m-1-11) edge  (m-1-12)
(m-1-13) edge  (m-1-14)
;
\end{tikzpicture}
\end{adjustbox}
 \caption{Bounds on the syzygies of a $\VIC^{\U}(\bk)$--module $\cA$ with $d \leq r$. } 
  \label{TableVICUResolution1}
\end{figure}

In the case that $d>r$, the bounds are shown in \autoref{TableVICUResolution2}. 
 \begin{figure}[h!]
        \centering
       \begin{adjustbox}{max width=1\textwidth} 
       \begin{tikzpicture}
  \matrix (m) [matrix of math nodes,row sep=.6em,column sep=1.5em,minimum width=2em]
  {  \qquad \cdots  & \cM^{k+1} & \cK^k &\cM^k & \cdots  & \cM^3 & \cK^2 & \cM^2 & \cK^1& \cM^1 & \cK^0& \cM^0 & \cA \\
\mathrm{gen. deg.}\leq & (2)3^{k-1}d +  3^{k-1}r+\frac12(3^{k}-1) & (2)3^{k-1}d +  3^{k-1}r+\frac12(3^{k}-1)&(2)3^{k-2}d +  3^{k-2}r+\frac12(3^{k-1}-1)& \cdots & 6d+3r+4 & 6d+3r+4 & 2d+r+1 & 2d+r+1& r & r& d&   \\
\mathrm{weight} \leq  & (2)3^{k-1}d +  3^{k-1}r+\frac12(3^{k}-1) & (2)3^{k-2}d +  3^{k-2}r+\frac12(3^{k-1}-1) &(2)3^{k-2}d +  3^{k-2}r+\frac12(3^{k-1}-1) & \cdots  & 6d+3r+4& 2d+r+1 & 2d+r+1 & r& r & d& d&  \\
\mathrm{inj. deg.} \leq & (4)3^{k-1}d +  (2)3^{k-1}r+3^{k}&(4)3^{k-2}d +  (2)3^{k-2}r+3^{k-1} &(4)3^{k-2}d +  (2)3^{k-2}r+3^{k-1}& \cdots   & 12d+6r+9 & 4d+2r+3 &4d+2r+3 & 2r+1& 2r+1 & 2d+1& 2d+1 &   \\ 
\mathrm{surj. deg.} \leq & (4)3^{k-1}d +  (2)3^{k-1}r+(3^{k}-1) & (4)3^{k-2}d +  (2)3^{k-2}r+(3^{k-1}-1)&(4)3^{k-2}d +  (2)3^{k-2}r+(3^{k-1}-1)& \cdots  & 12d+6r+8 & 4d+2r+2 & 4d+2r+2 & 2d+1& 2r & 2r& 2d &   \\};
 \path[right hook->] 
(m-1-1) edge  (m-1-2)
(m-1-3) edge  (m-1-4)
(m-1-5) edge  (m-1-6)
(m-1-7) edge  (m-1-8)
(m-1-9) edge  (m-1-10)
(m-1-11) edge  (m-1-12)
;
 \path[->>]  
(m-1-2) edge  (m-1-3)
(m-1-4) edge  (m-1-5)
(m-1-6) edge  (m-1-7)
(m-1-8) edge  (m-1-9)
(m-1-10) edge  (m-1-11)
(m-1-12) edge  (m-1-13)
;
\end{tikzpicture} 
\end{adjustbox}
 \caption{Bounds on the syzygies of a $\VIC^{\U}(\bk)$--module $\cA$ with $d > r$. } 
 \label{TableVICUResolution2}
\end{figure}
\end{proof}

\subsection{$\Cat$--module homology} \label{SectionCModuleHomology}

This subsection is not needed to prove our results about congruence subgroups of mapping class groups and automorphism groups of free groups. We include it because it allows us to reformulate \autoref{ThmBoundedResolution} and \autoref{ThmBoundedResolutionVICU} in a way that does not explicitly reference resolutions. In analogy to the theory of $\FI$--homology developed by Church, Ellenberg, and Farb \cite{CEF, CE}, we make the following definition. 

\begin{definition} \label{DefnH0} Define a functor $H_0^{\Cat}: \Cat$--Mod $\to \Cat$--Mod as the quotient $$ H_0^{\Cat}(\cA)_{V} = \frac{\cA_V}{\langle \; f_*(W)\; | \; f \in \Hom_{\Cat}(W, V), \; \dim_{\bk}W < \dim_{\bk}V \rangle }$$ 
Equivalently, $H_0^{\Cat}(\cA)$ is the largest $\Cat$--module quotient of $\cA$ such that all non-isomorphism $\Cat$ morphisms act by zero. 
By abuse of notation, we also write $H_0^{\Cat}$ to denote the composition of $H^{\Cat}_0$ with the forgetful functor $\Cat$--Mod $\to$ $\CB$--Mod.
\end{definition}
\begin{remark} \label{RemarkComputingH0}
We remark that, since every $\VIC^{\U}(\bk)$ morphism $f: W\to V$ with $\dim_{\bk}W < \dim_{\bk}V$ factors through a morphism $Z \to V$ with $ \dim_{\bk}Z = \dim_{\bk}V -1$, it suffices to take 
$$ H_0^{\VIC^{\U}}(\cA)_{V} = \frac{\cA_V}{\langle \; f_*(Z) \; | \; f \in \Hom_{\VIC^{\U}}(Z, V), \; \dim_{\bk}Z= \dim_{\bk}V-1 \rangle }.$$ 
Similarly, 
$$ H_0^{\SI}(\cA)_{V} = \frac{\cA_V}{\langle \; f_*(Z) \; | \; f \in \Hom_{\SI}(Z, V), \; \dim_{\bk}Z= \dim_{\bk}V-2 \rangle }.$$ 
\end{remark}

The following proposition summarizes some properties of the functor $H_0^{\Cat}$. Several parts are analogous to \cite[Definition 2.3.7 and Remark 2.3.8]{CEF}. 

\begin{proposition} \label{H0Properties} Let $R$ be a commutative ring and consider the categories of $\Cat$--modules and $\CB$--modules over $R$. The functor $H_0^{\Cat}: \Cat$--Mod $\to$ $\CB$--Mod satisfies the following. 
\begin{enumerate}
\item  \label{RemarkH0Generation} A $\Cat$--module $\cA$ is generated in degree $\leq d$ if and only if $H_0^{\Cat}(\cA)$ is supported in degree $\leq d$. 
\item \label{splittings} Suppose $\bk$ is a finite commutative ring, $R$ is a field of characteristic zero, and $\cA$ is a $\Cat$--module. There are (non-canonical) splittings 
$$H_0^{\Cat}(\cA)_n \to \cA_n \qquad \text{in each degree $n$.}$$
\item \label{surjections} If $\bk$ is a finite commutative ring and $R$ is a field of characteristic zero, then any $\Cat$--module $\cA$ can be realized as a quotient of the induced module $$ \cM(H_0^{\Cat}(\cA)) \twoheadrightarrow \cA .$$
For general commutative rings $R$ and $\bk$, the $\Cat$--module $\cA$ can be realized as a quotient of the induced module $$ \cM\left( \{ \cA_n \; | \; n \in \text{support}(H_0^{\Cat}(\cA)) \} \right) \twoheadrightarrow \cA .$$
\item \label{inverse} The functor $H_0^{\Cat}$ is a left inverse to the functor $\cM$, that is, 
$$ H_0^{\Cat}( \cM(W)) = W \qquad \text{ for all $\CB$--modules $W$.} $$ 
\item  \label{adjoint} The functor $H_0^{\Cat}$ is the left adjoint to  the inclusion of categories $$\iota: \CB\text{--Mod} \to \Cat\text{--Mod},$$ where $\iota$ is defined such that non-isomorphism $\Cat$ morphisms act on $\iota(W)$ by zero. 
\item \label{H0Exact} The functor $H^{\Cat}_0$ is right exact. 
Hence, the same is true of  $H^{\Cat}_0$ when viewed as a functor $$H^{\Cat}_0: \Cat\text{--Mod} \to \Cat\text{--Mod}.$$
\end{enumerate} 
\end{proposition}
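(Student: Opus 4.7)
The plan is to work through the six parts in an order that lets each build on the previous, with Maschke's theorem playing the only substantive role. For part \ref{RemarkH0Generation}, I would simply unwind definitions: by \autoref{RemarkIndSurjects}, $\cA$ is generated in degree $\leq d$ if and only if for every $n > d$ the $\G_n$-representation $\cA_n$ equals the $R[\G_n]$-submodule generated by $\Cat$-morphism-images from objects of strictly smaller rank, which is precisely the condition that the quotient $H_0^{\Cat}(\cA)_n$ of \autoref{DefnH0} vanishes.

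For part \ref{adjoint}, I would argue directly from the universal property. Given any $\Cat$-module homomorphism $\phi\colon \cA \to \iota(W)$, the requirement that non-isomorphism $\Cat$ morphisms act by zero on $\iota(W)$ forces $\phi(f_*(Z)) = 0$ for every non-isomorphism $f\colon Z \to V$. Thus $\phi$ factors uniquely through the quotient defining $H_0^{\Cat}(\cA)$, yielding a natural bijection
$$\Hom_{\Cat\text{--Mod}}(\cA, \iota(W)) \;\cong\; \Hom_{\CB\text{--Mod}}(H_0^{\Cat}(\cA), W).$$
Part \ref{H0Exact} is then immediate, since any left adjoint is right exact, and post-composition with the exact inclusion $\iota$ shows the same of $H_0^{\Cat}$ regarded as a functor to $\Cat$-modules.

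For part \ref{inverse}, I would use the explicit formula $\cM(W)_n \cong \bigoplus_{m \geq 0} \Ind_{\G_m \times \G_{n-m}}^{\G_n}(W_m \boxtimes R)$ from \autoref{DefMLeftAdjoint}. The summands with $m < n$ lie visibly in the $R[\G_n]$-submodule generated by $\Cat$-morphism-images from lower-rank objects, while the $m=n$ summand is canonically identified with $W_n$ and receives no such contributions; the quotient therefore returns $W_n$ in every degree. Part \ref{splittings} is then a direct application of Maschke's theorem: when $\bk$ is a finite commutative ring and $R$ has characteristic zero, each $\G_n$ is finite and each group algebra $R[\G_n]$ is semisimple, so the surjection $\cA_n \twoheadrightarrow H_0^{\Cat}(\cA)_n$ of $R[\G_n]$-representations admits a section.

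For part \ref{surjections}, in the semisimple setting the degreewise sections from \ref{splittings} assemble into a $\CB$-module map $H_0^{\Cat}(\cA) \to \cF(\cA)$; applying the $\cM \dashv \cF$ adjunction of \autoref{DefMLeftAdjoint} produces a $\Cat$-module map $\cM(H_0^{\Cat}(\cA)) \to \cA$, and by part \ref{RemarkH0Generation} the values of $\cA$ in the support of $H_0^{\Cat}(\cA)$ generate $\cA$, so this map is surjective. For arbitrary rings $R$ and $\bk$, I would instead apply $\cM$ to the inclusion of $\CB$-modules $\{\cA_n \mid n \in \text{support}(H_0^{\Cat}(\cA))\} \hookrightarrow \cF(\cA)$ and invoke \autoref{RemarkIndSurjects} together with part \ref{RemarkH0Generation}. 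No step is a serious obstacle; the whole argument amounts to chaining formal consequences of the definitions together with one invocation of Maschke's theorem.
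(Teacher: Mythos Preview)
Your proposal is correct and follows essentially the same approach as the paper's proof: both unwind definitions via \autoref{RemarkIndSurjects} for part~\ref{RemarkH0Generation}, invoke semisimplicity of $R[\G_n]$ for part~\ref{splittings}, use the sections together with the $\cM\dashv\cF$ adjunction for part~\ref{surjections}, verify part~\ref{inverse} from the explicit formula for $\cM(W)$, and deduce part~\ref{H0Exact} from part~\ref{adjoint} via the left-adjoint-is-right-exact principle. Your treatment of part~\ref{adjoint} is slightly more explicit than the paper's (which simply cites the analogous argument in Church--Ellenberg--Farb), and your surjectivity justification in part~\ref{surjections} is about as terse as the paper's own, but neither difference is substantive.
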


\begin{proof}
By definition, $H_0^{\Cat}(\cA)_n=0$ only if the $R[\G_n]$--module $\cA_n$ is generated by the image of $\cA_{n-1}$. Hence \autoref{RemarkH0Generation} follows from \autoref{RemarkIndSurjects} \autoref{b}. 
 \autoref{splittings} follows because $R[\G_n]$ is semi-simple by assumption, so the natural surjections $\cA_n \to H_0^{\Cat}(\cA)_n$ split. The map $\{ H_0^{\Cat}(\cA)_n\} \to \{\cA_n\}$ of $\CB$--modules constructed in \autoref{splittings} then induces the map of $\Cat$--modules  $\cM(H_0^{\Cat}(\cA)) \to \cA$ of \autoref{surjections}, and (as in the equivalence of \autoref{RemarkIndSurjects} \autoref{d} and \autoref{e}) it is not difficult to deduce from the definition of $H_0^{\Cat}$ that this map must surject. More generally, there is a surjective map of $\C$--modules 
 $$ \cM\left( \{ \cA_n \; | \; n \in \text{support}(H_0^{\Cat}(\cA)) \} \right) \longrightarrow \cA$$
 by an argument similar to the proof of \autoref{RemarkIndSurjects}  \autoref{e}. 
 
 \autoref{inverse} can be verified directly from the formula for $\cM(W)$. \autoref{adjoint} follows as in \cite[Definition 2.3.7 and Remark 2.3.8]{CEF}. To deduce  \autoref{H0Exact}, observe that  $H_0^{\Cat}: \Cat$--Mod $\to$ $\CB$--Mod is the left adjoint to $\iota$, and therefore right exact \cite[Theorem 2.6.1]{Weibel}. Since exactness is defined pointwise on $\Cat$--modules, the same result implies that  $H_0^{\Cat}$ is exact as a functor $\Cat$--Mod $\to$ $\Cat$--Mod. 
\end{proof}

By  \autoref{H0Properties} \ref{H0Exact}, we may make the following definition.

\begin{definition} \label{DefnHp} Define the functors $H_k^{\C}: \Cat$--Mod $\to  \Cat$--Mod   to be the left derived functors of $H_0^{\Cat}$. 
\end{definition}

To compute the $\Cat$--homology of a $\Cat$--module $\cA$, we may take an acyclic resolution $\cP^* \to \cA$, apply $H^{\C}_0$ to each term and pass to homology. The following proposition shows that we take the terms $\cP^i$ to be any induced modules. 

\begin{proposition} \label{SharpAcyclic} Induced $\Cat$--modules over $R$ are $H_*^{\Cat}$--acyclic. 
\end{proposition}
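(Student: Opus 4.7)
The plan is to exhibit an explicit projective resolution of $\cM(W)$ in $\Cat$--Mod obtained by applying $\cM$ to a projective resolution of $W$ in $\CB$--Mod, and then use that $H_0^{\Cat}$ is a left inverse to $\cM$ to compute the derived functors directly. This is a Grothendieck-style argument reflecting the adjunction $\cM \dashv \cF$ paired with the fact that $\cM$ preserves projectives.

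First I would verify that the category $\CB$--Mod has enough projectives: since a $\CB$--module is just a sequence $\{W_n\}$ of $R[\G_n]$--modules with no compatibility required, one can resolve each $W_n$ separately by a projective $R[\G_n]$--module (e.g.\ a free module), so a projective resolution $P_\bullet \to W$ in $\CB$--Mod exists without any hypothesis on $\bk$ or $R$. Applying $\cM$ termwise, \autoref{MExact} shows that $\cM(P_\bullet) \to \cM(W)$ remains exact, and the first sentence of \autoref{CorMWProjective} (which only uses that $\cM$ is left adjoint to the exact forgetful functor $\cF$, and so holds unconditionally) implies that each $\cM(P_i)$ is a projective $\Cat$--module. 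Thus $\cM(P_\bullet) \to \cM(W)$ is a bona fide projective resolution of $\cM(W)$ in $\Cat$--Mod.

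Next I would use this resolution to compute $H_k^{\Cat}(\cM(W))$ via \autoref{DefnHp}: namely, as the $k$th homology of $H_0^{\Cat}(\cM(P_\bullet))$. By \autoref{H0Properties}\ref{inverse}, the composition $H_0^{\Cat} \circ \cM$ is naturally isomorphic to the identity (after implicit passage through $\iota$), so $H_0^{\Cat}(\cM(P_\bullet))$ is identified with the complex $P_\bullet$ itself. Since $P_\bullet$ is a resolution of $W$, its homology is $W$ concentrated in degree zero. This gives $H_0^{\Cat}(\cM(W)) = W$ and $H_k^{\Cat}(\cM(W)) = 0$ for $k \geq 1$, which is exactly the claimed acyclicity.

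There is no serious obstacle here; the proof is essentially a formal consequence of the three facts that $\cM$ is exact (\autoref{MExact}), $\cM$ sends projectives to projectives (\autoref{CorMWProjective}), and $H_0^{\Cat} \circ \cM \cong \id$ (\autoref{H0Properties}\ref{inverse}). The only subtle point is keeping straight whether $H_0^{\Cat}$ is viewed as landing in $\CB$--Mod or $\Cat$--Mod; the identification via $\iota$ and the exactness of both versions of $H_0^{\Cat}$ make the two derived-functor computations agree, so that the homology computation in the previous paragraph yields the same vanishing in either interpretation.
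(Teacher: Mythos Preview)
Your proof is correct and follows essentially the same approach as the paper: take a projective resolution of $W$ in $\CB$--Mod, apply $\cM$ (using \autoref{MExact} and \autoref{CorMWProjective}) to obtain a projective resolution of $\cM(W)$, then observe via \autoref{H0Properties}\ref{inverse} that applying $H_0^{\Cat}$ recovers the original exact complex $P_\bullet$. Your additional remarks on enough projectives in $\CB$--Mod and on the $\CB$--Mod versus $\Cat$--Mod target of $H_0^{\Cat}$ are welcome clarifications but do not alter the argument.
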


\begin{proof}
Let $W$ be a $\CB$--module. It suffices to show that $H_k^{\Cat} (\cM(W)) = 0$ for all $k>0$. Let $$ \cdots \longrightarrow P^2 \longrightarrow P^1 \longrightarrow P^0 \longrightarrow W$$ be a projective resolution of $W$ by $\CB$--modules. Since $\cM$ is exact by  \autoref{MExact}, we can promote this resolution to a resolution of $\cM(W)$ by induced $\Cat$--modules 
$$ \cdots \longrightarrow \cM(P^2) \longrightarrow \cM(P^1) \longrightarrow \cM(P^0) \longrightarrow \cM(W).$$
By  \autoref{CorMWProjective}, this is a projective resolution. Applying $H^{\Cat}_0$, however, recovers our original resolution
$$ \cdots \longrightarrow P^2 \longrightarrow P^1 \longrightarrow P^0.$$ This resolution is exact by construction, and so we find $H_k^{\Cat} (\cM(W)) = H_k(P^*) =0$ for $k>0$. 
\end{proof}

\begin{proposition} Let $R$ be a field of characteristic zero and $\bk$ a finite field. Let  $\cA$ be a $\Cat$--module over $R$ generated in degree $\leq d$ and related in degree $\leq r$. Then $H_0^{\Cat} (\cA)_V$ vanishes for  $\dim_\bk V > d$ and $H_1^{\Cat} (\cA)_V$ vanishes for  $\dim_\bk V > r$.  
\begin{itemize}
\item If $\Cat$ is $\SI(\bk)$ or $\VIC(\bk)$,  then for $k \geq 1$, the groups  $H_k^{\Cat} (\cA)_V$ vanish once $\dim_\bk V > 3^{k-1}r$. 
\item If $\Cat$ is $\VIC^{\U}(\bk)$, then for $k \geq 2$, the groups  $H_k^{\Cat} (\cA)_V$ vanish once $$\dim_\bk V >  (2)3^{k-2}\max(r,d) +  3^{k-2}r+\frac12(3^{k-1}-1) .$$
\end{itemize}
 \label{regC}
\end{proposition}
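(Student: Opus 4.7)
The plan is to reduce the proposition to a direct consequence of the bounded resolutions constructed in \autoref{ThmBoundedResolution} and \autoref{ThmBoundedResolutionVICU}, using the $H_*^{\Cat}$-acyclicity of induced modules established in \autoref{SharpAcyclic}.

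First, I would dispatch the bound on $H_0^{\Cat}(\cA)$ separately: \autoref{H0Properties}\,\ref{RemarkH0Generation} states that $\cA$ is generated in degree $\leq d$ if and only if $H_0^{\Cat}(\cA)$ is supported in degree $\leq d$, so the $H_0$ statement is immediate from the hypothesis.

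Next, for $k \geq 1$, I would invoke the appropriate bounded resolution. In the $\SI(\bk)$ or $\VIC(\bk)$ case, \autoref{ThmBoundedResolution} produces a resolution
\[ \cdots \longrightarrow \cM^k \longrightarrow \cdots \longrightarrow \cM^1 \longrightarrow \cM^0 \longrightarrow \cA \]
by induced $\Cat$-modules with $\cM^0$ generated in degree $\leq d$ and $\cM^k$ generated in degree $\leq 3^{k-1} r$ for $k \geq 1$. In the $\VIC^{\U}(\bk)$ case, \autoref{ThmBoundedResolutionVICU} provides an analogous resolution with the generation-degree bounds stated in the proposition. By \autoref{SharpAcyclic}, induced $\Cat$-modules are $H_*^{\Cat}$-acyclic, so $H_k^{\Cat}(\cA)$ can be computed as the $k$th homology of the complex obtained by applying $H_0^{\Cat}$ to $\cM^\bullet$.

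Finally, $H_k^{\Cat}(\cA)$ is then a subquotient of $H_0^{\Cat}(\cM^k)$ in the category of $\Cat$-modules. Applying \autoref{H0Properties}\,\ref{RemarkH0Generation} to $\cM^k$, which is generated in degree at most the bound recorded above, we conclude that $H_0^{\Cat}(\cM^k)_V$, and therefore $H_k^{\Cat}(\cA)_V$, vanishes once $\dim_\bk V$ exceeds the generation degree of $\cM^k$. Substituting in the explicit bounds yields the two bullet points; in particular, for $k = 1$ the resulting bound is $r$, matching the separate $H_1$ claim. Since all the substantive work is concentrated in \autoref{ThmBoundedResolution} and \autoref{ThmBoundedResolutionVICU}, there is no genuine obstacle here; the only thing to verify carefully is the correct indexing ($k=1$ versus $k \geq 2$ in the $\VIC^{\U}$ case, where the inductive base is shifted).
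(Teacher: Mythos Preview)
Your proposal is correct and follows essentially the same approach as the paper: use \autoref{SharpAcyclic} to compute $H_k^{\Cat}(\cA)$ via the induced resolutions of \autoref{ThmBoundedResolution} and \autoref{ThmBoundedResolutionVICU}, then read off the vanishing from the generation degrees of the terms $\cM^k$. The paper's proof is a one-sentence version of exactly this argument; your explicit invocation of \autoref{H0Properties}\,\ref{RemarkH0Generation} and the observation that $H_k^{\Cat}(\cA)$ is a subquotient of $H_0^{\Cat}(\cM^k)$ just unpack what the paper leaves implicit.
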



\begin{proof} By \autoref{SharpAcyclic}, we can compute $H_k^{\Cat} (\cA)_d$ by resolving $\cA$ by induced $\Cat$--modules, applying the functor $H^{\Cat}_0$ and taking homology. The result follows from applying $H^{\Cat}_0$  to the resolution described in  \autoref{ThmBoundedResolution} or  \autoref{ThmBoundedResolutionVICU}. 
\end{proof}

The following proposition relates the vanishing of  $H_0^{\Cat} (\cA)_n$ and $H_1^{\Cat} (\cA)_n$ to the generation and relation degree of a $\Cat$--module $\cA$. 

\begin{proposition} \label{presentLemma} Suppose that $\cA$ is a $\Cat$--module such that $H_0^{\Cat} (\cA)_n=0$ for $n > d$ and $H_1^{\Cat} (\cA)_n=0$ for $n > r$. Then $\cA$ is generated in degree $\leq d$ and related in degree $\leq \max(r,d)$. 
\end{proposition}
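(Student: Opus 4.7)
The plan is to deduce both claims directly from the derived functor formalism set up in \autoref{SectionCModuleHomology}, by choosing a presentation of $\cA$ by induced $\Cat$--modules and reading off degree bounds from the long exact sequence of $H_*^{\Cat}$.

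The generation degree statement is immediate: by \autoref{H0Properties}\autoref{RemarkH0Generation}, the condition that $H_0^{\Cat}(\cA)$ is supported in degree $\leq d$ is exactly the condition that $\cA$ is generated in degree $\leq d$.

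For the relation degree, first I would invoke \autoref{H0Properties}\autoref{surjections} (in the general form, so as not to need semi-simplicity here) to produce an induced $\Cat$--module $\cM(V)$ with $V$ a $\CB$--module supported in degrees $\leq d$ together with a surjection $\cM(V) \twoheadrightarrow \cA$. Letting $\cK$ denote its kernel gives a short exact sequence
\[
0 \longrightarrow \cK \longrightarrow \cM(V) \longrightarrow \cA \longrightarrow 0.
\]
It then suffices, by another application of \autoref{H0Properties}\autoref{RemarkH0Generation}, to show that $H_0^{\Cat}(\cK)_n = 0$ whenever $n > \max(r,d)$.

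To do this, I would use that $H_0^{\Cat}$ is right exact (\autoref{H0Properties}\autoref{H0Exact}) so that its left derived functors yield a long exact sequence
\[
H_1^{\Cat}(\cM(V)) \longrightarrow H_1^{\Cat}(\cA) \longrightarrow H_0^{\Cat}(\cK) \longrightarrow H_0^{\Cat}(\cM(V)) \longrightarrow H_0^{\Cat}(\cA) \longrightarrow 0.
\]
By \autoref{SharpAcyclic} the leftmost term vanishes, and by \autoref{H0Properties}\autoref{inverse} the term $H_0^{\Cat}(\cM(V))$ is canonically $V$. Evaluating in degree $n > \max(r,d)$ gives $H_1^{\Cat}(\cA)_n = 0$ (since $n > r$) and $V_n = 0$ (since $n > d$), and exactness forces $H_0^{\Cat}(\cK)_n = 0$, as desired.

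The only point that needs care — and the step I would check most explicitly — is the existence of the four-term exact sequence above; but this is a formal consequence of the right exactness of $H_0^{\Cat}$ combined with the existence of projective resolutions in $\Cat\text{-Mod}$ (a module category over $R$), so there is no genuine obstacle. The rest is simply bookkeeping of supports via the long exact sequence.
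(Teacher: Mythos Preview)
Your proof is correct and follows essentially the same approach as the paper: both produce a short exact sequence $0 \to \cK \to \cM \to \cA \to 0$ with $\cM$ induced and generated in degree $\leq d$, then read off the vanishing of $H_0^{\Cat}(\cK)_n$ for $n > \max(r,d)$ from the long exact sequence in $H_*^{\Cat}$. Your version is slightly more explicit about why the long exact sequence exists and why $H_1^{\Cat}(\cM(V))$ vanishes (via \autoref{SharpAcyclic}), but the argument is otherwise identical.
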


\begin{proof}  

 \autoref{H0Properties} implies that $\cA$ is generated in degree $\leq d$ and that we can find a short exact sequence $$0 \to \cK \to \cM \to \cA \to 0$$ with $\cM$ an induced $\Cat$--module which is generated in degree $\leq d$.  Consider the associated long exact sequence on homology $$ \cdots \longrightarrow H_1^{\Cat} (\cA)_n \longrightarrow  H_0^{\Cat} (\cK)_n \longrightarrow H_0^{\Cat} (\cM)_n \longrightarrow H_0^{\Cat} (\cA)_n \longrightarrow 0.$$ Since $H_1^{\Cat} (\cA)_n = 0$ for $n > r$ and $H_0^{\Cat} (\cM)_n=0$ for $n>d$, it follows that $H_0^{\Cat} (\cK)_n$ must vanish for $n> \max(r,d)$. The claim follows by  \autoref{H0Properties} \ref{RemarkH0Generation}.
\end{proof}





Combining \autoref{regC} and \autoref{presentLemma} establishes \autoref{regSI} and \autoref{regVIC}. 

The following corollaries were suggested to us by Eric Ramos. We state these without explicit ranges although the proofs we give can easily be made effective. 

\begin{corollary} \label{abCat} Let $\bk$ be a finite field and $R$ a field of characteristic zero. Let $\Cat$ be one of the categories $\SI(\bk)$ or $\VIC^{\U}(\bk)$. Then the category of $\Cat$-modules presented in finite degree is an abelian category. 
\end{corollary}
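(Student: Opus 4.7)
The plan is to realize the subcategory of finitely presented $\Cat$-modules as a full subcategory of the ambient abelian category of all $\Cat$-modules (which is abelian as a functor category into $R$-Mod), and show it is closed under finite direct sums, kernels, and cokernels. Closure under direct sums is automatic since $H_0^{\Cat}$ and $H_1^{\Cat}$ commute with finite direct sums. The conceptual key is the equivalence, given by combining \autoref{regC} and \autoref{presentLemma}, between finite presentation degree and the vanishing in large degree of $H_0^{\Cat}$ and $H_1^{\Cat}$; moreover, if $\cA$ is finitely presented then \emph{all} $H_k^{\Cat}(\cA)$ are supported in bounded degree (with explicit bounds from \autoref{regC}).

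For cokernels: given $f\colon \cA \to \cB$ between finitely presented modules, apply the long exact sequence of left derived functors of $H_0^{\Cat}$ to $0 \to \im f \to \cB \to \coker f \to 0$. The tail
\[ H_1^{\Cat}(\cB) \to H_1^{\Cat}(\coker f) \to H_0^{\Cat}(\im f) \to H_0^{\Cat}(\cB) \to H_0^{\Cat}(\coker f) \to 0 \]
shows that $H_0^{\Cat}(\coker f)$ and $H_1^{\Cat}(\coker f)$ vanish in large degree, since $H_0^{\Cat}(\im f)$ is a quotient of $H_0^{\Cat}(\cA)$ by right exactness (\autoref{H0Properties}\ref{H0Exact}), and hence inherits its support bound from $\cA$.

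The main obstacle is closure under kernels, which requires a two-step bootstrap. First, having established that $\coker f$ is finitely presented, \autoref{regC} yields that \emph{every} $H_k^{\Cat}(\coker f)$ is supported in finite degree. Feeding this back into the long exact sequence for $0 \to \im f \to \cB \to \coker f \to 0$, together with the analogous bounds for $\cB$, one concludes that every $H_k^{\Cat}(\im f)$ (in particular $H_1^{\Cat}(\im f)$ and $H_2^{\Cat}(\im f)$) is supported in finite degree. Second, apply the long exact sequence to $0 \to \ker f \to \cA \to \im f \to 0$: the segments
\[ H_1^{\Cat}(\cA) \to H_1^{\Cat}(\im f) \to H_0^{\Cat}(\ker f) \to H_0^{\Cat}(\cA) \]
and
\[ H_2^{\Cat}(\im f) \to H_1^{\Cat}(\ker f) \to H_1^{\Cat}(\cA) \to H_1^{\Cat}(\im f) \]
then show that $H_0^{\Cat}(\ker f)$ and $H_1^{\Cat}(\ker f)$ vanish in large degree, so $\ker f$ is finitely presented by \autoref{presentLemma}.

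Since kernels and cokernels in the subcategory agree with those in the ambient abelian category, all axioms of an abelian category are inherited. The main difficulty is precisely the kernel step, where the argument genuinely uses that higher syzygies (not just $H_0^{\Cat}$ and $H_1^{\Cat}$) of a finitely presented module are bounded — this is exactly the content of our regularity theorems \autoref{ThmBoundedResolution} and \autoref{ThmBoundedResolutionVICU}, and is what makes this corollary nontrivial in the absence of a Noetherian hypothesis.
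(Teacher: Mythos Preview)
Your proposal is correct and follows essentially the same approach as the paper: both arguments reduce to showing closure under kernels and cokernels, handle the cokernel first, and then bootstrap through the two short exact sequences $0 \to \im f \to \cB \to \coker f \to 0$ and $0 \to \ker f \to \cA \to \im f \to 0$ using the long exact sequence in $H_*^{\Cat}$ together with the regularity theorems to control $H_2^{\Cat}$. The only minor differences are stylistic: you derive the cokernel bound via the long exact sequence (the paper just asserts it holds over any $R$ and $\bk$), and you bound $H_2^{\Cat}(\im f)$ directly from the long exact sequence using $H_3^{\Cat}(\coker f)$ and $H_2^{\Cat}(\cB)$, whereas the paper first shows $\im f$ is finitely presented and then re-applies \autoref{regSI}/\autoref{regVIC} to it.
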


\begin{proof}
Let $f:\cA \m \cB$ be a map between $\Cat$--modules presented in finite degree. We must check that $\ker(f)$ and $\coker(f)$ are presented in finite degree. Note that without any assumptions on $R$ and $\bk$, it is true that the cokernel of a map of $\Cat$-modules presented in finite degree is presented in finite degree. 

By \autoref{regSI} in the case of $\SI$ and \autoref{regVIC} in the case of $\VIC^{\U}$, we see that $H_2^\Cat(\coker(f))_n \cong 0$ for $n$ sufficiently large. By considering the long exact sequence of $\Cat$--homology groups associated to the short exact sequence $$ 0 \m \im(f) \m B \m \coker(f) \m 0,$$ we see that $H_1^\Cat(\im(f))_n \cong H_0^\Cat(\im(f))_n \cong 0$ for $n$ sufficiently large. \autoref{regSI} and \autoref{regVIC} imply that $H_2^\Cat(\im(f))_n \cong 0$ for $n$ sufficiently large. By considering the long exact sequence of $\Cat$--homology groups associated to the short exact sequence $$ 0 \m \ker(f) \m A \m \im(f) \m 0,$$ we see that $\ker(f)$ is presented in finite degree. 

\end{proof}

\begin{corollary} Let $\bk$ be a finite field and $R$ a field of characteristic zero. Let $\Cat$ be one of the categories $\SI(\bk)$ or $\VIC^{\U}(\bk)$. Let $ \cB$ be a $\Cat$--module and $\cA$ a $\Cat$--submodule. If $\cA$ has finite generation degree and $\cB$ has finite presentation degree, then $\cA$ has finite presentation degree. 
\end{corollary}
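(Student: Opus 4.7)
The plan is to extract the needed vanishing of $H_1^{\Cat}(\cA)$ from two applications of the long exact sequence in $\Cat$--homology associated to the short exact sequence
\[ 0 \longrightarrow \cA \longrightarrow \cB \longrightarrow \cB/\cA \longrightarrow 0,\]
with the main regularity theorems (\autoref{regSI} and \autoref{regVIC}) providing the bridge.

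First I would control $\cB/\cA$. Since $\cB/\cA$ is a quotient of $\cB$, part \ref{RemarkH0Generation} of \autoref{H0Properties} gives that $H_0^{\Cat}(\cB/\cA)_n$ vanishes in high degree, so $\cB/\cA$ has finite generation degree. To bound its relation degree, consider the portion
\[ H_1^{\Cat}(\cB)_n \longrightarrow H_1^{\Cat}(\cB/\cA)_n \longrightarrow H_0^{\Cat}(\cA)_n \longrightarrow H_0^{\Cat}(\cB)_n \]
of the long exact sequence. By hypothesis $\cB$ is presented in finite degree, so $H_1^{\Cat}(\cB)_n = 0$ for $n$ large; and since $\cA$ has finite generation degree, $H_0^{\Cat}(\cA)_n = 0$ for $n$ large. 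Hence $H_1^{\Cat}(\cB/\cA)_n = 0$ for $n$ large, and \autoref{presentLemma} shows that $\cB/\cA$ is presented in finite degree.

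Now I would apply the main regularity theorem to $\cB/\cA$: by \autoref{regSI} (when $\Cat = \SI(\bk)$) or \autoref{regVIC} (when $\Cat = \VIC^{\U}(\bk)$), the group $H_2^{\Cat}(\cB/\cA)_n$ vanishes once $n$ is sufficiently large. Then a second look at the long exact sequence, this time at the segment
\[ H_2^{\Cat}(\cB/\cA)_n \longrightarrow H_1^{\Cat}(\cA)_n \longrightarrow H_1^{\Cat}(\cB)_n, \]
shows that $H_1^{\Cat}(\cA)_n$ vanishes for $n$ large, since the outer terms do. Combined with the finite generation degree of $\cA$, \autoref{presentLemma} then yields that $\cA$ has finite presentation degree.

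The argument is essentially formal once the regularity theorem is in hand; the substantive input is \autoref{regSI}/\autoref{regVIC}, which is exactly what allows us to promote control of $\cB/\cA$ in degrees $0$ and $1$ into control of $H_2^{\Cat}(\cB/\cA)$. I do not expect any genuine obstacle beyond keeping track of the two applications of the long exact sequence in the correct order.
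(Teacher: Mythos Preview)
Your proposal is correct and follows essentially the same approach as the paper: two passes through the long exact sequence of $0 \to \cA \to \cB \to \cB/\cA \to 0$, with \autoref{regSI}/\autoref{regVIC} in between to upgrade finite presentation of $\cB/\cA$ to vanishing of $H_2^{\Cat}(\cB/\cA)$. In fact you have spelled out the relevant segments of the long exact sequence more explicitly than the paper does.
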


\begin{proof}
Let $\cK$ denote $\cB/\cA$. By considering the long exact sequence in $\Cat$-homology associated to $$0 \m \cA \m \cB \m \cK \m 0,$$ we see that $\cK$ has finite presentation degree. Thus, by \autoref{regSI} and \autoref{regVIC}, $H_2^\Cat(\cK)_n \cong 0$ for $n$ sufficiently large. By again considering the long exact sequence in $\Cat$-homology associated to $$0 \m \cA \m \cB \m \cK \m 0,$$ we see that $\cA$ has finite presentation degree. 
\end{proof}

\section{Representation stability results}
\label{Representationstabilityresults}

In this section, we apply the algebraic tools developed in the previous section to prove our representation stability theorems. 


\subsection{Central stability homology}

Central stability homology is an invariant of modules over categories such as $\SI(\bk)$ or $\VIC(\bk)$. In the context of $\SI(\bk)$--modules and $\VIC(\bk)$--modules, it was introduced by Putman--Sam \cite{PS}, though the name \emph{central stability homology} is due to Patzt \cite{Pa2}, based on earlier terminology in the work of Putman \cite{Pu}. In this subsection, we describe basic properties of central stability homology. After a draft of this paper was circulated, we were informed that many of the results of this subsection were independently established by Patzt \cite{Pa2}. In the interest of space, we will not reprove these properties.

Let $\Delta'$ denote the \emph{augmented semi-simplicial category}, the category of finite ordered sets and order-preserving injections. We will realize $\Delta'$ as a subcategory of  $\SI(\bk)$ and of $\VIC(\bk)$ by inclusions $s: \Delta' \m \SI(\bk)$ and $v: \Delta' \m \VIC(\bk)$ defined as follows. Given an ordered set $X$, let $s(X)$ be the free $\bk$--module on $X \sqcup X$ with $X \sqcup X$ a symplectic basis. Injective maps of sets induce symplectic embeddings by extending linearly. Let $v(X)$ be the free $\bk$--module on $X$. Given an order-preserving injection $\iota:X \m Y$, let $T:v(X) \m v(Y)$ be the linear map induced by $\iota$ and let $C$ be span $(Y-\im(f))$ in $v(Y)$. Define $v(\iota)$ to be $(T,C)$.

\begin{definition}
Let $\cA$ be an $\SI(\bk)$--module. We now define a augmented semi-simplicial $\SI(\bk)$--module $\CC_\bullet(\cA)$ whose value on an ordered set $X$ and a symplectic $\bk$--module $V$ is given by the formula 
\[\CC_X(\cA)_V= \bigoplus_{T \in \Hom_{\SI(\bk)}(s(X),V)} \cA(\im(T)^\perp). \] 
Composition induces the augmented semi-simplicial and $\SI(\bk)$--module structure.


Similarly for $\cA$ a $\VIC^{\U}(\bk)$--module, we define $\CC_\bullet(\cA)$ by the formula \[ \CC_X(\cA)_V= \bigoplus_{(T,C) \in \Hom_{\VIC(\bk)}(v(X),V)} \cA(C). \]

Let $\CC_i(\cA)_V$ denote $\CC_X(\cA)_V$ for $X=\{0,\ldots,i\}$. Let $\CC_*(\cA)_V$ denote the chain complex formed by taking the alternating sum of the face maps and let  $\HH_i(\cA)_V$ denote its homology $H_i(\CC_*(\cA)_V)$. We call the chain complex $\CC_*(\cA)$ the \emph{central stability chains} on $\cA$ and call its homology $\HH_*(\cA)$ the \emph{central stability homology}. 
\end{definition}

Central stability homology is closely related to $\SI$--homology and $\VIC^{\U}$--homology, and both control the generation degrees of the modules of syzygies. 

Patzt \cite[Theorem 5.7]{Pa2} gave a general criterion for results of the form of the following \autoref{ThmPatztBoundedResolution} to hold for a broad class of categories $\Cat$. He verifies the criterion for the categories $\SI(\bk)$ and $\VIC(\bk)$ \cite[Remark 5.6]{Pa2}.  Miller--Patzt--Wilson \cite[Proposition 3.14]{MPW} verified the criterion in the case $\Cat=\VIC^{\U}(\bk)$.

\begin{theorem}[Patzt {\cite[Theorem 5.7]{Pa2}, Miller--Patzt--Wilson \cite[Proposition 3.14]{MPW}}] \label{ThmPatztBoundedResolution} \qquad 

\noindent Let $\cA$ be an $\SI(\bk)$--module with $\bk$ a field. Let $d_0, \ldots, d_k$ be integers with $d_{i+1} -d_i \geq 3$. Then the following are equivalent. \begin{enumerate}
\item There is an exact sequence of $\SI(\bk)$--modules $$\cW^k \m \cW^{k-1} \m \ldots \m \cW^0 \m \cA \m 0$$ with $\cW^i$ induced and generated in degrees $\leq d_i$. 
\item $\HH_i(\cA)_n=0$ for $n>d_{i+1}$ for all $i <k$. 
\end{enumerate}

\noindent Let $\cA$ be a $\VIC^{\U}(\bk)$--module with $\bk$ a field. Let $d_0, \ldots, d_k$ be integers with $d_{i+1} -d_i \geq 2$. Then the following are equivalent. \begin{enumerate}
\item There is an exact sequence of $\VIC^{\U}(\bk)$--modules $$\cW^k \m \cW^{k-1} \m \ldots \m \cW^0 \m \cA \m 0$$ with $\cW^i$ induced and generated in degrees $\leq d_i$. 
\item $\HH_i(\cA)_n=0$ for $n>d_{i+1}$ for all $i <k$. 
\end{enumerate}

\end{theorem}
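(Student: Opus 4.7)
The plan is to prove the equivalence by reducing both implications to a single key technical input: a precise vanishing range for the central stability homology of induced $\Cat$-modules. Specifically, I first want to establish a lemma of the form: if $\cW=\cM(W)$ with $W$ supported in degree $d$, then $\HH_i(\cW)_n = 0$ for $n > d + \alpha(i+1) - \alpha$, where $\alpha = 3$ for $\Cat=\SI(\bk)$ and $\alpha = 2$ for $\Cat = \VIC^{\U}(\bk)$. By additivity and the formula $\cM(W) = \bigoplus_m \cM(W_m)$, it suffices to prove this for the representables $\cM(d)$. The computation $\cM(d)_n \cong \Ind_{\G_{n-d}}^{\G_n}R$ combined with Shapiro's lemma identifies $\HH_*(\cM(d))_n$ with the reduced homology of a semi-simplicial set of $\Cat$-morphisms $\bk^{2d} \hookrightarrow \bk^{2n}$ (resp.\ complemented injections into $\bk^n$) modulo an appropriate stabilizer action. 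This in turn reduces to a classical connectivity statement: for $\SI(\bk)$, high connectivity of the symplectic building / complex of isotropic partial symplectic bases (Charney); for $\VIC^{\U}(\bk)$, high connectivity of the complex of split partial bases over $\bk$ (van der Kallen, Maazen). The discrepancy in the gaps ($3$ versus $2$) reflects the genuinely lower connectivity of symplectic flag complexes over linear ones.

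For the implication $(1) \Rightarrow (2)$, the plan is a double complex argument. Given the resolution $\cW^\bullet \to \cA$, form the double complex $\CC_p(\cW^q)_V$. Because each $\CC_p$ is a direct sum of the functors $\cA \mapsto \cA(\text{complement})$, the functor $\CC_p$ is exact; hence one spectral sequence collapses to $\HH_*(\cA)_V$ on the $p=0$ line. The other spectral sequence has $E^1_{p,q} = \HH_p(\cW^q)_V$. The key lemma bounds the nonvanishing range of each $E^1_{p,q}$ by $V$ of rank at most $d_q + \alpha p + \text{const}$, and the gap condition $d_{i+1}-d_i \geq \alpha$ ensures that the only contributions to the diagonal computing $\HH_i(\cA)_V$ with $\dim_\bk V > d_{i+1}$ come from terms that have already been killed by the lemma. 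Tracking the degrees yields the asserted vanishing $\HH_i(\cA)_n = 0$ for $n > d_{i+1}$, $i<k$.

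For $(2) \Rightarrow (1)$, I would induct on $k$. Using the $H_0^{\Cat}$-type interpretation of central stability homology at the bottom (the augmentation $\CC_0(\cA) \to \cA$), produce a surjection $\cW^0 \twoheadrightarrow \cA$ from an induced module generated in degree $\leq d_0$. Let $\cK = \ker(\cW^0 \to \cA)$. The short exact sequence $0\to \cK \to \cW^0 \to \cA \to 0$ yields a long exact sequence in central stability homology; together with the vanishing range for $\HH_*(\cW^0)$ supplied by the key lemma, and the gap condition, this gives $\HH_i(\cK)_n = 0$ for $n > d_{i+2}$ for all $i < k-1$. The sequence $d_1, d_2, \dots, d_k$ then satisfies the hypotheses of the theorem for $\cK$ and length $k-1$, and the inductive hypothesis furnishes the remaining terms $\cW^1, \dots, \cW^k$ of the resolution.

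The main obstacle, and the reason this is a nontrivial result, is the key vanishing lemma for induced modules: both the identification of $\HH_*(\cM(d))$ with the homology of an explicit semi-simplicial complex, and the connectivity bounds for that complex, require substantial input. Everything else — the double complex collapse argument and the inductive construction of resolutions from long exact sequences — is essentially formal homological algebra once the lemma is in hand. It is precisely the different connectivity behavior in the symplectic and linear settings that forces the theorem to be stated with a gap of $3$ for $\SI(\bk)$ but only $2$ for $\VIC^{\U}(\bk)$, and any proof must record this difference carefully when propagating degree bounds through the spectral sequence or the induction.
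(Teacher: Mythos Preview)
The paper does not prove this theorem; it is quoted from Patzt \cite[Theorem 5.7]{Pa2} (with the $\VIC^{\U}$ case supplied by \cite[Proposition 3.14]{MPW}), so there is no in-paper proof to compare against directly. Your outline is essentially the argument Patzt gives: the key lemma is the vanishing range for $\HH_*$ of induced modules, which he reduces (via his criterion in \cite[Remark 5.6]{Pa2}) to the high connectivity of the relevant complexes of partial bases, exactly as you describe; the two implications are then the formal double-complex and inductive-kernel arguments you sketch.

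One small correction of attribution: the symplectic connectivity input is Mirzaii--van der Kallen (as in \autoref{PropM(0)} of the present paper), not Charney, while the $\VIC$ input goes back to Charney and van der Kallen--Maazen; this does not affect the mathematics. Otherwise your plan is correct and matches the cited proof.
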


The following is a reformulation of work of Randal-Williams--Wahl \cite[Lemma 5.9]{RWW} and Mirzaii--van der Kallen \cite[Theorem 7.4]{MvdK}. See also Miller--Patzt--Wilson \cite[Proposition 3.14]{MPW}. It is a slight sharpening of the above theorem for the induced module $\cM(0)$.

\begin{proposition}[Patzt {\cite[Remark 5.6]{Pa2}}] \label{PropM(0)}
Let $\bk$ be a field. Then $\HH_i(\cM^{\SI}(0))_n \cong 0$ for $n> 2i+3 $ and $\HH_i(\cM^{\VIC^{\U}}(0))_n \cong 0$ for $n> 2i+2$.
\end{proposition}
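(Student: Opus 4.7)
The plan is to recognize $\CC_*(\cM^{\Cat}(0))_n$ as the augmented chain complex of a semi-simplicial set whose high-connectivity has already been established in the literature, and then translate the connectivity bounds into the claimed vanishing of $\HH_i$. First I would observe that the zero object of $\Cat$ admits a unique morphism into every object, so $\cM^{\Cat}(0)_V \cong R$ for all $V$. Consequently $\CC_i(\cM^{\Cat}(0))_n$ is the free $R$-module on the set
\[ X_i^{\Cat}(n) \; := \; \Hom_{\Cat}\big(c(\{0,\dots,i\}),\; \bk^n \text{ or } \bk^{2n}\big), \]
where $c = v$ or $c = s$ depending on whether $\Cat = \VIC^{\U}(\bk)$ or $\Cat = \SI(\bk)$. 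In particular, $\HH_i(\cM^{\Cat}(0))_n$ is the reduced $R$-homology of the geometric realization $|X_\bullet^{\Cat}(n)|$.

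Next I would identify $X_\bullet^{\Cat}(n)$ with one of the classical ``complexes of partial frames''. In the $\VIC^{\U}(\bk)$ case, an $i$-simplex is the data of an ordered $(i+1)$-tuple of linearly independent vectors in $\bk^n$ together with a $\U$-oriented complement of their span. In the $\SI(\bk)$ case, an $i$-simplex is an ordered sequence of $i+1$ pairwise orthogonal hyperbolic pairs in $\bk^{2n}$. Both semi-simplicial sets are well-known models for the posets whose connectivity bounds were established by Randal-Williams--Wahl \cite[Lemma 5.9]{RWW} and Mirzaii--van der Kallen \cite[Theorem 7.4]{MvdK}, respectively. The comparison between the category-theoretic realization and the classical geometric complex is exactly the content of Patzt \cite[Remark 5.6]{Pa2}, with a closely related argument in Miller--Patzt--Wilson \cite[Proposition 3.14]{MPW}.

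Then I would translate connectivity into vanishing: if $|X_\bullet^{\VIC^{\U}}(n)|$ is $\lfloor (n-3)/2\rfloor$-connected over the field $R$, then $\HH_i(\cM^{\VIC^{\U}}(0))_n$ vanishes whenever $\lfloor (n-3)/2\rfloor \geq i$, i.e., $n \geq 2i+3$, which is equivalent to $n > 2i+2$. Similarly, if $|X_\bullet^{\SI}(n)|$ is $\lfloor (n-4)/2\rfloor$-connected (where $2n$ is the rank of the ambient symplectic module), then $\HH_i(\cM^{\SI}(0))_n = 0$ whenever $n \geq 2i+4$, i.e., $n > 2i+3$, giving the claimed bound.

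The main obstacle is the careful identification of $X_\bullet^{\Cat}(n)$ with the specific complex whose connectivity is computed in the cited references: in the $\VIC^{\U}$ case one must verify that the forgetful map that discards the $\U$-orientation data of the complement induces the appropriate homotopy equivalence, and in the $\SI$ case one must check that ordered hyperbolic pairs assemble into the correct semi-simplicial model. Since this bookkeeping is carried out in \cite[Remark 5.6]{Pa2}, I would simply cite that identification rather than reprove it, and the proposition follows as a direct reformulation of \cite[Lemma 5.9]{RWW} and \cite[Theorem 7.4]{MvdK}.
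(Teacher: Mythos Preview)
Your proposal is correct and matches the paper's approach: the paper does not give its own proof but presents the proposition as a direct reformulation of the connectivity results of Randal-Williams--Wahl \cite[Lemma 5.9]{RWW} and Mirzaii--van der Kallen \cite[Theorem 7.4]{MvdK}, with the identification of $\CC_\bullet(\cM^{\Cat}(0))_n$ with the relevant partial-frame complexes attributed to Patzt \cite[Remark 5.6]{Pa2}. One minor remark: in the $\VIC^{\U}$ case there is no $\U$-orientation data to forget, since for strictly-smaller-rank domains the paper defines $\Hom_{\VIC^{\U}}(\bk^d,\bk^n)=\Hom_{\VIC}(\bk^d,\bk^n)$ with the complement's orientation uniquely determined, so that step is immediate rather than requiring a homotopy equivalence.
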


\subsection{Stability for congruence subgroups}

\subsubsection{Congruence subgroups of mapping class groups}

Putman--Sam \cite[Corollary 6.22]{PS} observe that the representations $H_i(\Mod(\Sigma_{g,1},p);R)$ assemble to form an $\SI(\Z/p\Z)$--module over $R$. We denote this $\SI(\Z/p\Z)$--module by $H_i(\Mod(\Sigma,p);R)$.

We prove our results on congruence subgroups using spectral sequences introduced by Putman--Sam \cite{PS}. The following is implicit in the proof of \cite[Theorem K]{PS} and builds on \cite[Theorem 5.13, Lemma 6.24, Theorem 6.25]{PS}. See also Patzt \cite[Corollary 8.5]{Pa2} and Miller--Patzt--Wilson \cite[Proposition 3.38]{MPW}.

\begin{theorem}[Putman--Sam {\cite{PS}}] \label{SIss}
For each $g>0$, there is a homologically graded spectral sequence $E^r_{a,b}(g)$ satisfying the following properties. 
\begin{enumerate}
 \item \label{ItemEr} $E^r_{a,b}(g) \cong 0$ for $a<-1$ or $b <0$.
  \item \label{ItemE2}  $E^2_{a,b}(g) \cong \HH_a(H_b(\Mod(\Sigma,p);R))_g$.
   \item \label{ItemEinfty} $E^\infty_{a,b}(g) \cong 0$ for $a+b \leq \frac{g-3}{2}$. 
\end{enumerate}
The $E^2$ page is illustrated in  \autoref{E2pageSI}. 
\end{theorem}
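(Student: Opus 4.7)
The plan is to imitate Putman--Sam's strategy of making the congruence subgroup act on a highly connected semi-simplicial set built from symplectic basis data. Let $Y_\bullet(g)$ be the semi-simplicial set whose $i$-simplices are ordered $(i+1)$-tuples of mutually symplectically orthogonal hyperbolic pairs in $H_1(\Sigma_{g,1}; \Z/p\Z) \cong (\Z/p\Z)^{2g}$, with face maps forgetting a pair. Sending a symplectic embedding to the image of the standard symplectic basis sets up a natural $\Sp_{2g}(\Z/p\Z)$-equivariant bijection
\[ Y_i(g) \; \longleftrightarrow \; \Hom_{\SI(\Z/p\Z)}((\Z/p\Z)^{2(i+1)}, (\Z/p\Z)^{2g}). \]

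The geometric input is a connectivity theorem of Mirzaii--van der Kallen, which implies that $|Y_\bullet(g)|$ is $\lfloor (g-3)/2 \rfloor$-connected. I would then form the double complex $C_*(Y_\bullet(g)) \otimes_{R[\Mod(\Sigma_{g,1},p)]} F_*$ for a projective resolution $F_* \to R$ over $R[\Mod(\Sigma_{g,1},p)]$, and consider its two associated spectral sequences. Both abut to the equivariant homology of $|Y_\bullet(g)|$ with trivial $R$-coefficients, and the connectivity bound forces this abutment to vanish in total degrees $\leq \tfrac{g-3}{2}$, which yields item (iii). Item (i) is immediate from the construction, since $Y_\bullet(g)$ has no simplices below dimension $-1$ and $F_*$ is concentrated in nonnegative degrees.

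For the filtration by simplicial degree, Shapiro's lemma collapses the $E^1$-page to a sum over $\Mod(\Sigma_{g,1},p)$-orbits of $a$-simplices of the group homology of simplex stabilizers. A Birman-style analysis identifies the stabilizer of the simplex determined by a symplectic embedding $T$ with the congruence subgroup of the mapping class group of a complementary genus-$(g-a-1)$ subsurface whose first homology is identified with $\im(T)^\perp \subseteq (\Z/p\Z)^{2g}$. Combined with the bijection above, this yields
\[ E^1_{a,b}(g) \; \cong \; \bigoplus_{T \in \Hom_{\SI(\Z/p\Z)}((\Z/p\Z)^{2(a+1)}, (\Z/p\Z)^{2g})} H_b(\Mod(\Sigma,p); R)_{\im(T)^\perp}, \]
which is exactly $\CC_a(H_b(\Mod(\Sigma,p);R))_g$. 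Since the $d^1$ differential is the alternating sum of face maps induced by forgetting a hyperbolic pair, it matches the central stability differential, so passing to homology gives $E^2_{a,b}(g) \cong \HH_a(H_b(\Mod(\Sigma,p);R))_g$, proving item (ii).

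The main technical obstacle is the stabilizer identification in the third paragraph: one must verify that the pointwise stabilizer in the congruence subgroup of an ordered tuple of hyperbolic classes in $H_1$ really is a congruence subgroup of the mapping class group of a subsurface, and that the resulting action on its own homology matches the $\SI(\Z/p\Z)$-module structure built into $H_b(\Mod(\Sigma,p);R)$. This is precisely the content of the geometric results of Putman--Sam quoted here; once those are in hand, the remaining work is the standard bookkeeping of the equivariant homology spectral sequence for a highly connected $G$-CW complex.
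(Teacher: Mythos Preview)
There is a genuine gap in your argument, and it lies precisely at the stabilizer identification you flag as the ``main technical obstacle.'' Your semi-simplicial set $Y_\bullet(g)$ consists of tuples of hyperbolic pairs in $H_1(\Sigma_{g,1};\Z/p\Z)$. But $\Mod(\Sigma_{g,1},p)$ is \emph{by definition} the kernel of the action on $H_1(\Sigma_{g,1};\Z/p\Z)$, so it acts \emph{trivially} on $Y_\bullet(g)$. Consequently the stabilizer of every simplex is the entire group $\Mod(\Sigma_{g,1},p)$, not a congruence subgroup of a smaller-genus surface. Running your spectral sequence with this trivial action gives
\[
E^1_{a,b}(g)\;\cong\; R[Y_a(g)]\otimes_R H_b\big(\Mod(\Sigma_{g,1},p);R\big),
\]
i.e.\ copies of the homology of the genus-$g$ congruence subgroup indexed by simplices, rather than the required sum of $H_b\big(\Mod(\Sigma,p);R\big)_{\im(T)^\perp}$, which involves genus $g-a-1$. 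The $d^1$ differential is then the central stability differential on the constant coefficient system, not on $H_b(\Mod(\Sigma,p);R)$, so item~(ii) fails.

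The repair is exactly what Putman--Sam carry out in the results cited (their Lemma~6.24 and Theorem~6.25): one must act on a complex of \emph{geometric} data on the surface --- isotopy classes of embedded subsurfaces (or systems of curves) realizing the hyperbolic pairs --- rather than on their mod-$p$ homology classes. On such a complex the congruence subgroup acts nontrivially, the stabilizer of a simplex genuinely is (up to the usual Birman exact sequence analysis) the congruence subgroup of the complementary subsurface, and one must further prove high connectivity of this geometric complex, which does not follow directly from Mirzaii--van der Kallen. Once that geometric complex and its connectivity are in place, the rest of your outline (the double complex, Shapiro's lemma, matching $d^1$ with the central stability differential) is correct and is indeed the shape of the Putman--Sam argument.
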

\begin{figure}[h!]    \centering \begin{tikzpicture} \footnotesize
  \matrix (m) [matrix of math nodes,
    nodes in empty cells,nodes={minimum width=3ex,
    minimum height=5ex,outer sep=2pt},
 column sep=3ex,row sep=3ex]{ 
 3    &  \HH_{-1}(H_3(\Mod(\Sigma,p);R))_g &  \HH_{0}(H_3(\Mod(\Sigma,p);R))_g  & \HH_{1}(H_3(\Mod(\Sigma,p);R))_g  &  \HH_{2}(H_3(\Mod(\Sigma,p);R))_g & \\       
 2    &  \HH_{-1}(H_2(\Mod(\Sigma,p);R))_g & \HH_{0}(H_2(\Mod(\Sigma,p);R))_g&  \HH_{1}(H_2(\Mod(\Sigma,p);R))_g  &\  \HH_{2}(H_2(\Mod(\Sigma,p);R))_g & \\           
1     &   \HH_{-1}(H_1(\Mod(\Sigma,p);R))_g &  \HH_{0}(H_1(\Mod(\Sigma,p);R))_g & \HH_{1}(H_1(\Mod(\Sigma,p);R))_g  &   \HH_{2}(H_1(\Mod(\Sigma,p);R))_g & \\                  
 0     &  \HH_{-1}(H_0(\Mod(\Sigma,p);R))_g & \HH_{0}(H_0(\Mod(\Sigma,p);R))_g &  \HH_{1}(H_0(\Mod(\Sigma,p);R))_g  &  \HH_{2}(H_0(\Mod(\Sigma,p);R))_g & \\       
 \quad\strut &   -1  &  0  &  1  & 2  &\\}; 
 \draw[thick] (m-1-1.east) -- (m-5-1.east) ;
 \draw[thick] (m-5-1.north) -- (m-5-6.north east) ;
\end{tikzpicture}
\caption{$E^2_{a,b}(g)$. } \label{E2pageSI}
\end{figure}

We now prove the following strengthening of \autoref{TheoremMod}. 

\begin{theorem} \label{detailedModp}
Let $R$ be a field of characteristic zero and $p$ be a prime. The $\SI(\Z/p\Z)$--module $$H_0(\Mod(\Sigma,p);R) \cong \cM^{\SI}(0)$$ is generated in degree $\leq 0$ and has no relations. The $\SI(\Z/p\Z)$--module $H_1(\Mod(\Sigma,p);R)$ is generated in degree $\leq 5$ and related in degree $\leq 8$. For $i >1$, the $\SI(\Z/p\Z)$--module $H_i(\Mod(\Sigma,p);R)$ is generated in degree $\leq (8)3^{2i-3}$ and related in degree $\leq (8)3^{2i-2}$.
\end{theorem}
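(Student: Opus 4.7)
The proof is by induction on $i$. For the base case $i=0$, the $\SI(\Z/p\Z)$--module $H_0(\Mod(\Sigma, p); R)$ is the constant functor $R$ (with trivial group action in every degree), which coincides with the induced module $\cM^{\SI}(0)$; hence it is generated in degree $\leq 0$ with no relations, as $\cM^{\SI}(0)$ is its own free resolution.

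For the inductive step, fix $i \geq 1$ and assume the claimed generation degrees $d_j$ and relation degrees $r_j$ are established for all $j<i$. The main device is the spectral sequence of \autoref{SIss}, with $E^2_{a,b}(g) \cong \HH_a(H_b(\Mod(\Sigma,p);R))_g$ converging to zero for $a+b \leq (g-3)/2$. Because outgoing differentials from the columns $a=-1$ and $a=0$ land in columns $a\leq -2$ where the spectral sequence vanishes identically, any class at $E^2_{-1,i}(g)$ or $E^2_{0,i}(g)$ must be eliminated by an incoming differential or survive to $E^\infty$. Consequently, once $g$ is large enough that $E^\infty$ vanishes in these positions (namely $g\geq 2i+1$ for position $(-1,i)$ and $g\geq 2i+3$ for position $(0,i)$) and that every potential source $E^2_{r-1,i-r+1}(g)$, respectively $E^2_{r,i-r+1}(g)$, vanishes for $2 \leq r \leq i+1$, we conclude $E^2_{-1,i}(g)=0$, respectively $E^2_{0,i}(g)=0$. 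By definition of central stability homology, vanishing of $E^2_{-1,i}(g)$ for $g>d$ is exactly the generation degree bound $\leq d$, and combined with \autoref{ThmPatztBoundedResolution} (whose gap condition $r-d\geq 3$ is easily verified from the inductive numerics), vanishing of $E^2_{0,i}(g)$ for $g>r$ yields the relation degree bound $\leq r$.

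The required vanishing of the source terms is supplied by the inductive hypothesis together with the algebraic machinery of Section~2. For the corner case $i-r+1=0$, \autoref{PropM(0)} gives $\HH_k(\cM^{\SI}(0))_g=0$ for $g>2k+3$. For $1 \leq j := i-r+1 < i$, I would apply \autoref{ThmBoundedResolution} to the inductive bounds to build a resolution of $H_j$ by induced $\SI(\Z/p\Z)$--modules with generation degrees $d_j, r_j, 3r_j, 9r_j,\ldots$; the gap condition is satisfied from the $3^{k-1}r_j$ growth, and \autoref{ThmPatztBoundedResolution} then gives $\HH_k(H_j)_g=0$ for $g>3^k r_j$ when $k \geq 1$.

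Substituting the inductive bounds, for $i \geq 2$ the dominant contribution to the generation degree of $H_i$ is the $r=2$ source $\HH_1(H_{i-1})_g$, vanishing for $g > 3 r_{i-1} = 8 \cdot 3^{2i-3}$, which matches the claim. Similarly the dominant contribution to the relation degree is the $r=2$ source $\HH_2(H_{i-1})_g$, vanishing for $g > 9 r_{i-1} = 8 \cdot 3^{2i-2}$. For $i = 1$ only the $H_0$ corner contributions appear: $\HH_1(\cM^{\SI}(0))_g$ vanishes for $g>5$ (giving $d_1 \leq 5$), and $\HH_2(\cM^{\SI}(0))_g$ vanishes for $g>7$ (well within the claim $r_1 \leq 8$). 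The main obstacle is purely bookkeeping: one must verify that no auxiliary source, such as $\HH_k(\cM^{\SI}(0))_g$ for $k=i$ or $\HH_k(H_j)_g$ for $j<i-1$, dominates the $j=i-1$ contribution, and that the gap condition of \autoref{ThmPatztBoundedResolution} is preserved at each step of the induction.
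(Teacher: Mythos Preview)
Your proposal is correct and follows essentially the same approach as the paper: induction on $i$, using the Putman--Sam spectral sequence of \autoref{SIss} together with \autoref{PropM(0)}, \autoref{ThmBoundedResolution}, and \autoref{ThmPatztBoundedResolution} to control the vanishing of $\HH_{-1}$ and $\HH_0$ of $H_i(\Mod(\Sigma,p);R)$. The paper carries out explicitly the bookkeeping you defer (verifying that the $r=2$ source dominates all other incoming differentials and that the gap condition holds at each step, and noting that the $i=1$ bound of $7$ must be relaxed to $8$ precisely to meet the gap $r_1 - d_1 \geq 3$), but your identification of the dominant terms and the overall structure are the same.
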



\begin{proof}  We proceed by induction on $i$. Since $H_0(\Mod(\Sigma,p);R) \cong \cM^{\SI}(0),$ by \autoref{PropM(0)}, 
 $$E^2_{a,0} \cong \HH_a(H_0(\Mod(\Sigma,p);R) )_g = 0 \qquad \text{for $g>2a+3$.}$$ Now consider the $b=1$ row of the spectral sequence, which corresponds to the homology of $\Mod(\Sigma,p)$ in degree $i=1$. This row requires some additional care, so we will show explicitly how to bound the vanishing of these central stability homology groups. Once $g \geq 3$, $E^\infty_{-1,1}(g) = 0$ by \autoref{SIss}. But for $g >5$, the group $E^2_{1,0}(g) \cong  \HH_{1}(H_0(\Mod(\Sigma,p);R))_g$ vanishes, and so in this range there are no nonzero differentials into or out of the groups $E^r_{-1,1}(g)$ for any $r \geq 2$. Thus $$E^2_{-1,1}(g) \cong \HH_{-1}(H_1(\Mod(\Sigma,p);R))_g = 0 \qquad \text{for $g >5$}.$$
 Similarly $E^\infty_{0,1}(g) = 0$ for $g \geq 5$ and for $r \geq 2$ the domain $E^r_{2,0}(g)$ of the only possible nonzero differential to or from $E^r_{0,1}(g)$ vanishes for $g >7$. 
 Thus $$\HH_{0}(H_1(\Mod(\Sigma,p);R))_g = 0 \qquad \text{for $g >7$}.$$
 See \autoref{E2PageVanishing}. 
 \begin{figure}[h!]    \centering \begin{tikzpicture} \scriptsize 
  \matrix (m) [matrix of math nodes,
    nodes in empty cells,nodes={minimum width=3ex,
    minimum height=5ex,outer sep=2pt},
    column sep=6ex,row sep=3ex]{ 
                &      &     &     &\strut& & & &  \\    
 2    &  \HH_{-1}(H_2(\Mod(\Sigma,p);R))_g & \HH_{0}(H_2(\Mod(\Sigma,p);R))_g&  \HH_{1}(H_2(\Mod(\Sigma,p);R))_g  &\  \HH_{2}(H_2(\Mod(\Sigma,p);R))_g & \\          
1     &   \HH_{-1}(H_1(\Mod(\Sigma,p);R))_g &  \HH_{0}(H_1(\Mod(\Sigma,p);R))_g & \HH_{1}(H_1(\Mod(\Sigma,p);R))_g  &   \HH_{2}(H_1(\Mod(\Sigma,p);R))_g & \\             
 0     &  0  & 0   & 0   &0 &    \\       
 \quad\strut &   -1  &  0  &  1  & 2  &   \\}; 
 
 \draw[-stealth, blue] (m-4-4.west) -- (m-3-2.east) node [midway,below] {$d_2$};
 
 \draw[-stealth, blue] (m-4-5.west) -- (m-3-3.east) node [midway,below] {$d_2$};

\draw[thick] (m-1-1.east) -- (m-5-1.east) ;
\draw[thick] (m-5-1.north) -- (m-5-6.north) ;

\end{tikzpicture}
\caption{Page $E^2_{p,q}(g)$ for $g \ge 8$.} \label{E2PageVanishing}
\end{figure}  

 If we replace the condition $g >7$ with the weaker condition $g > 8$, then these two central stabiltiy homology groups satisfy the hypotheses of Patzt's \autoref{ThmPatztBoundedResolution}, and we obtain a partial resolution of induced $\SI(\Z/p\Z)$--modules $$ \cM^1 \longrightarrow \cM^0  \longrightarrow H_1(\Mod(\Sigma,p);R) \longrightarrow 0$$
 with $\cM^0$ generated in degree $\leq 5$ and $\cM^1$ generated in degree $\leq 8$. 

 We now proceed with the inductive step. Suppose that $j > 1$ and that $H_i(\Mod(\Sigma,p);R)$ is generated in degree $\max(5, (8)3^{2i-3})$ and related in degree $\leq (8)3^{2i-2}$ for all $1\leq i <j$.  Then \autoref{ThmPatztBoundedResolution} implies that there is a partial resolution of $H_i(\Mod(\Sigma,p);R)$ by induced $\SI(\Z/p\Z)$--modules with term $\cM^0$ generated in degree $\max(5, (8)3^{2i-3})$ and $\cM^1$ generated in degree $\leq (8)3^{2i-2}$. It follows by \autoref{ThmBoundedResolution} that we can extend this partial resolution to a resolution by induced modules with term $\cM^k$ generated in degree $\leq (8)(3^{2i-2})(3^{k-1})$. Then \autoref{ThmPatztBoundedResolution} implies that $\HH_{k}(H_i(\Mod(\Sigma,p);R))_g$ vanishes for $g > (8)(3^{2i-2})(3^{k})$. Small values of these bounds are shown in  \autoref{E2PageVanishingRanges}, with some differentials superimposed.  
 \begin{figure}[h!]    \centering \begin{tikzpicture} \scriptsize 
  \matrix (m) [matrix of math nodes,
    nodes in empty cells,nodes={minimum width=3ex,
    minimum height=5ex,outer sep=2pt},
    column sep=6ex,row sep=3ex]{ 
4 &  8(3^5) & 8(3^6) &  8(3^7) &  8(3^8) & 8(3^9) & 8(3^{10}) \\ 
3 &  8(3^3) & 8(3^4) &  8(3^5) &  8(3^6) & 8(3^7) & 8(3^8) \\ 
 2    &  8(3) &  8(3^2)  & 8(3^3) & 8(3^4) &  8(3^5) &  8(3^6) & \\          
1     &   5 & 8  & 8(3)  & 8(3^2)  & 8(3^3) & 8(3^4) &   \\             
 0     &  0  & -1   & -1   &-1 & -1 & -1 &   \\       
 \quad\strut &   -1  &  0  &  1  & 2  & 3 & 4 &  \\}; 

\draw[thick] (m-1-1.east) -- (m-6-1.east) ;
\draw[thick] (m-6-1.north) -- (m-6-8.north) ;

 \draw[-stealth, blue] (m-2-4.west) -- (m-1-2.east); 
 \draw[-stealth, blue] (m-3-5.west) -- (m-1-2.east); 
  \draw[-stealth, blue] (m-4-6.west) -- (m-1-2.east); 
    \draw[-stealth, blue] (m-5-7.west) -- (m-1-2.east); 
    
\end{tikzpicture}
\caption{$E^2_{a,b}(g)$ vanishes at each point once $g$ is strictly greater than the stated value.} \label{E2PageVanishingRanges}
\end{figure}  
 
  In particular, $E^2_{-1+r, j-r+1}(g) = \HH_{-1+r}(H_ {j-r+1}(\Mod(\Sigma,p);R))_g$ vanishes for $g > (8)(3^{2(j-r+1)-2})(3^{r-1})$ for $2 \leq r \leq j+1$, so there are no nonzero differentials to or from $E^r_{-1, j}(g)$ once $r\geq 2 $ and $g > (8)(3^{2j-3})$. Since $E^{\infty}_{-1, j}(g)=0$ in this range, we conclude that 
  $$E^2_{-1, j}(g) = \HH_{-1}(H_{j}(\Mod(\Sigma,p);R))_g = 0 \qquad \text{for $g > (8)(3^{2j-3})$}.$$ 
  Similarly $E^2_{r,  j-r+1}(g) = \HH_{r}(H_{j-r+1}(\Mod(\Sigma,p);R))_g$ vanishes for $g > (8)(3^{2(j-r+1)-2})(3^{r})$  for $2 \leq r \leq j+1$, so there are no nonzero differentials to or from $E^r_{0, j}(g)$ once $r \geq 2$ and $g > (8)(3^{2j-2})$. Again $E^{\infty}_{-1, j}(g)=0$ in this range, so we conclude that 
  $$E^2_{0, j}(g) = \HH_{0}(H_{j}(\Mod(\Sigma,p);R))_g = 0 \qquad \text{for $g > (8)(3^{2j-2})$}.$$ 
 Finally, \autoref{ThmPatztBoundedResolution} then implies that $H_{j}(\Mod(\Sigma,p);R)$ is generated in degree $\leq (8)(3^{2j-3})$ and related in degree $\leq (8)(3^{2j-2})$, which concludes the inductive step.
\end{proof}

\subsubsection{Congruence subgroups of automorphism groups of free products}

Putman--Sam \cite[Corollary 6.7]{PS} observed that the representations $H_i(\Aut(F_n,p);R)$ assemble to form a $\VIC^{\pm}(\Z/p\Z)$--module over $R$ which we will denote by $H_i(\Aut(F,p);R)$. Implicitly in the proof of \cite[Theorem I]{PS} and building on \cite[Theorem 5.13, Lemma 6.8, Theorem 6.9]{PS}, Putman--Sam proved the following. 

\begin{theorem}[Putman--Sam {\cite{PS}}] \label{SSaut}
For all $n$, there is a homologically graded spectral sequence $E^r_{a,b}(n)$ satisfying the following properties. 
\begin{enumerate}
 \item $E^r_{a,b}(n) \cong 0$ for $a<-1$ or $b <0$.
  \item $E^2_{a,b}(n) \cong \HH_a(H_b(\Aut(F,p);R))_n$.
   \item $E^\infty_{a,b}(n) \cong 0$ for $a+b \leq \frac{n-3}{2}$. \label{connectivityautomorphism}
 \end{enumerate}
 
\end{theorem}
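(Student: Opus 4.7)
The plan is to construct this spectral sequence as the equivariant homology spectral sequence for the action of $\Aut(F_n)$ on a suitable highly connected semi-simplicial set, exactly mimicking the construction of the $\SI$-spectral sequence in \autoref{SIss} (which was itself based on \cite[Theorem 5.13, Lemma 6.8, Theorem 6.9]{PS}). The only ingredients that change are the simplicial model, its connectivity bound, and the combinatorial structure governing stabilizers of simplices.

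First, I would consider a semi-simplicial set $X_\bullet(n)$ whose $k$-simplices are $\VIC^{\pm}(\Z/p\Z)$-morphisms $(\Z/p\Z)^{k+1} \hookrightarrow (\Z/p\Z)^n$, i.e.\ partial $\U$-oriented bases of $(\Z/p\Z)^n$ together with oriented complements, with face maps given by forgetting basis vectors (and combining their spans into the complement). The group $\Aut(F_n)$ acts on $X_\bullet(n)$ via the surjection $\Aut(F_n) \twoheadrightarrow \GL_n^{\pm}(\Z/p\Z)$, and by construction the congruence subgroup $\Aut(F_n,p)$ acts trivially on the set of simplices. The input I would cite from Putman--Sam is that $|X_\bullet(n)|$ is $\lfloor (n-3)/2\rfloor$-connected; this is the analog of \cite[Theorem 5.13]{PS} for the $\VIC^{\pm}$ setting and is the engine behind item \ref{connectivityautomorphism}.

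Second, I would form the double complex $F_\ast \otimes_{\Aut(F_n,p)} C_\ast(X_\bullet(n))$, where $F_\ast \to R$ is a projective resolution of the trivial $\Aut(F_n,p)$-module, and run the spectral sequence filtered by simplicial degree. The $E^1$-page splits as a sum over $\Aut(F_n,p)$-orbits of simplices of the group homology of stabilizers. Since $\Aut(F_n,p)$ acts trivially on simplices, each $a$-simplex contributes $H_b(\Stab_{\Aut(F_n,p)}(\sigma);R)$; the stabilizer of a $\VIC^{\pm}$-morphism $f:(\Z/p\Z)^{a+1}\hookrightarrow (\Z/p\Z)^n$ inside $\Aut(F_n,p)$ is identified with $\Aut(F_{n-a-1},p)$ acting on the complement (cf.\ the analog \cite[Lemma 6.8]{PS}). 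Summing over $\GL_n^{\pm}(\Z/p\Z)$-orbits of morphisms and comparing with the definition of central stability chains identifies the $E^1$ horizontal complex in degree $b$ with $\CC_\ast(H_b(\Aut(F,p);R))_n$, so that $E^2_{a,b}(n) \cong \HH_a(H_b(\Aut(F,p);R))_n$, giving item (ii). Item (i) is automatic from the non-negativity of simplicial and homological degrees once one shifts the augmentation to index $-1$.

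Finally, for item (iii), the $\lfloor (n-3)/2\rfloor$-connectivity of $|X_\bullet(n)|$ implies that the total complex has vanishing homology in the range $a+b \leq (n-3)/2$, which forces $E^\infty_{a,b}(n)=0$ there. The main obstacle is the connectivity statement for $|X_\bullet(n)|$: unlike the symplectic case where one can appeal to established results on complexes of symplectic partial bases, in the $\VIC^{\pm}$ setting one needs the Maazen/Charney-style argument adapted to $\U$-oriented complements, which is where the technical work of \cite[Theorem 5.13]{PS} is crucial. Everything else — the orbit analysis, the stabilizer identification, and the identification of the $E^1$ differential with the central stability differential — is bookkeeping once the semi-simplicial model and its connectivity are in hand.
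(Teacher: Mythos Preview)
Your overall architecture is right, but there is a genuine gap in the choice of semi-simplicial set. You take $X_\bullet(n)$ to have $k$-simplices the $\VIC^{\pm}(\Z/p\Z)$-morphisms $(\Z/p\Z)^{k+1}\hookrightarrow(\Z/p\Z)^n$, and then observe that $\Aut(F_n,p)$ acts trivially. But precisely because the action is trivial, the stabilizer in $\Aut(F_n,p)$ of \emph{every} simplex is all of $\Aut(F_n,p)$, not $\Aut(F_{n-a-1},p)$. Your $E^1$-page is then
\[
E^1_{a,b}\;\cong\;\bigoplus_{\sigma\in X_a(n)} H_b(\Aut(F_n,p);R)\;\cong\;R[X_a(n)]\otimes_R H_b(\Aut(F_n,p);R),
\]
which carries no information about $H_b(\Aut(F_{n-a-1},p);R)$ and cannot be identified with $\CC_a(H_b(\Aut(F,p);R))_n$. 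So the sentence ``the stabilizer of a $\VIC^{\pm}$-morphism \ldots\ inside $\Aut(F_n,p)$ is identified with $\Aut(F_{n-a-1},p)$'' contradicts your own setup and is the step that fails.

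The fix, and what Putman--Sam actually do, is to act on a complex built from $F_n$ itself rather than from its mod-$p$ abelianization: one uses the semi-simplicial set of split partial bases of the free group $F_n$ (tuples of elements of $F_n$ extendable to a free basis, together with a complementary free factor). On that complex $\Aut(F_n)$ acts with stabilizer of an $a$-simplex isomorphic to $\Aut(F_{n-a-1})$, hence the stabilizer in $\Aut(F_n,p)$ is $\Aut(F_{n-a-1},p)$; this is the content of \cite[Lemma 6.8]{PS}. The orbit set under $\Aut(F_n,p)$ is then identified with $\Hom_{\VIC^{\pm}(\Z/p\Z)}((\Z/p\Z)^{a+1},(\Z/p\Z)^n)$, which is where your $\VIC^{\pm}$ complex correctly enters---as the quotient, not as the space being acted on. The connectivity input \cite[Theorem 5.13]{PS} is likewise a statement about the free-group partial basis complex. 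With that corrected complex, the rest of your outline (double complex, $E^1$ identification with central stability chains, vanishing of $E^\infty$ from connectivity) goes through. The paper itself gives no proof of this theorem, simply citing \cite{PS}.
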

%
%
%

The following implies \autoref{thmB}.

\begin{theorem} \label{detailedAutp}
Let $R$ be a field of characteristic zero and $p$ be a prime. Then the $\VIC^{\pm}(\Z/p\Z)$--module $$H_0(\Aut(F,p);R) \cong \cM^{\VIC}(0)$$ is generated in degree $\leq 0$ and has no relations. 
The $\VIC^{\pm}(\Z/p\Z)$--module $H_1(\Aut(F,p);R)$ is generated in degree $\leq 4$ and related in degree $\leq 6$. For $i >1$, the $\VIC^{\pm}(\Z/p\Z)$--module $H_i(\Aut(F,p);R)$ is generated in degree $\leq (\frac{13}{2})3^{2i-3}-\frac12$ and related in degree $\leq (\frac{13}{2})3^{2i-2}-\frac12$.
\end{theorem}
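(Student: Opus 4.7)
The proof should follow the same template as the proof of \autoref{detailedModp}, replacing the $\SI$-side inputs by their $\VIC^{\pm}$ analogues: the spectral sequence of \autoref{SSaut}, the $\VIC^{\U}$ half of \autoref{ThmPatztBoundedResolution} (which requires only the gap $d_{i+1}-d_i \geq 2$), the bound $\HH_i(\cM^{\VIC^{\U}}(0))_n = 0$ for $n > 2i+2$ from \autoref{PropM(0)}, and the resolution from \autoref{ThmBoundedResolutionVICU}. I will induct on the homological degree $i$.

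The base case $i=0$ is immediate from the identification $H_0(\Aut(F,p);R) \cong \cM^{\VIC}(0)$. For $i=1$, I work in the spectral sequence $E^r_{a,b}(n)$ of \autoref{SSaut}. The bottom row vanishes in columns $a$ once $n > 2a+2$, so $E^2_{1,0}(n) = 0$ for $n>4$ and $E^2_{2,0}(n)=0$ for $n>6$. Since the only potentially nonzero differential entering $E^r_{-1,1}(n)$ for $r\geq 2$ originates at $E^2_{1,0}(n)$, and the only such differential entering $E^r_{0,1}(n)$ originates at $E^2_{2,0}(n)$, combining this with the vanishing of $E^{\infty}_{-1,1}$ for $n\geq 3$ and $E^{\infty}_{0,1}$ for $n\geq 5$ (from \autoref{connectivityautomorphism}) yields $\HH_{-1}(H_1(\Aut(F,p);R))_n = 0$ for $n>4$ and $\HH_0(H_1(\Aut(F,p);R))_n=0$ for $n>6$. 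Since $6-4 = 2$, the gap hypothesis of the $\VIC^{\U}$ half of \autoref{ThmPatztBoundedResolution} is satisfied, and we conclude the generation degree $\leq 4$ and relation degree $\leq 6$.

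For the inductive step, assume the bounds for all $1\leq i<j$. Apply \autoref{ThmBoundedResolutionVICU}: using $d = (\tfrac{13}{2})3^{2i-3}-\tfrac12$ and $r = (\tfrac{13}{2})3^{2i-2}-\tfrac12$ (with $r>d$), the $k$th module $\cM^k$ in the resolution is generated in degree $\leq (\tfrac{13}{2})3^{2i+k-3}-\tfrac12$ for all $k\geq 0$ (the $3^{k-2}$-terms collapse cleanly because $r = 3d$ in this regime). By \autoref{ThmPatztBoundedResolution} this in turn gives $\HH_k(H_i(\Aut(F,p);R))_n=0$ for $n > (\tfrac{13}{2})3^{2i+k-2}-\tfrac12$, and an analogous bound for $H_1$ and $\HH_k(H_0)_n$ from $2k+2$. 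Now, to control $\HH_{-1}(H_j(\Aut(F,p);R))_n$ and $\HH_0(H_j(\Aut(F,p);R))_n$, I examine all incoming differentials $d_r: E^r_{r-1,j-r+1}\to E^r_{-1,j}$ and $d_r: E^r_{r,j-r+1}\to E^r_{0,j}$ for $2\leq r\leq j+1$ (outgoing differentials land in columns $p<-1$ and are automatically zero). In each case the bound on the source is maximized at $r=2$, giving the required ranges $n > (\tfrac{13}{2})3^{2j-3}-\tfrac12$ and $n > (\tfrac{13}{2})3^{2j-2}-\tfrac12$ respectively (the $E^{\infty}$ vanishing from \autoref{connectivityautomorphism} in this range is easily checked).

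The gap condition $(\tfrac{13}{2})3^{2j-2} - (\tfrac{13}{2})3^{2j-3} = 13\cdot 3^{2j-3}\geq 2$ is satisfied, so \autoref{ThmPatztBoundedResolution} produces a partial resolution $\cW^1\to \cW^0\to H_j(\Aut(F,p);R)\to 0$ with $\cW^0$ generated in degree $\leq (\tfrac{13}{2})3^{2j-3}-\tfrac12$ and $\cW^1$ generated in degree $\leq (\tfrac{13}{2})3^{2j-2}-\tfrac12$, completing the induction. The main obstacle is bookkeeping: verifying that at position $(-1,j)$ and $(0,j)$ the $r=2$ term really is the maximum among all sources $E^2_{r-1,j-r+1}$ and $E^2_{r,j-r+1}$, and that the inductive upper bounds on the generation degrees of $\cM^k$ produced by \autoref{ThmBoundedResolutionVICU} collapse to the clean closed form $(\tfrac{13}{2})3^{2i+k-3}-\tfrac12$. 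Both are purely numerical checks with the formulas above.
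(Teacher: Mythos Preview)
Your proposal is correct and follows essentially the same route as the paper's proof: the same spectral sequence, the same base case analysis via \autoref{PropM(0)}, the same inductive extension via \autoref{ThmBoundedResolutionVICU}, and the same check that the $r=2$ differential source dominates. Two small remarks: your parenthetical explanation ``because $r=3d$'' is not the actual reason the formula collapses (in fact $r=3d+1$, and the collapse comes from the $-\tfrac12$ in $r$ cancelling the $\tfrac12\cdot 3^{k-1}$ term), and the closed form $(\tfrac{13}{2})3^{2i+k-3}-\tfrac12$ for the generation degree of $\cM^k$ fails at $i=1,\,k=0$ (where the bound is $4$, not $\tfrac53$); however, since the spectral sequence argument only uses $k\geq 1$, neither inaccuracy affects the proof.
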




\begin{proof} Since $H_0(\Aut(F,p);R) \cong \cM(0)$, these groups are generated in degree $\leq 0$ and have no relations. 
The bottom row of the $E^2(n)$ page, $E^2_{k,0}(n) \cong \HH_k(H_0(\Aut(F,p);R))$, vanishes for $n > 2k+2$ by \autoref{PropM(0)}. The groups $E^r_{-1,1}(n)$ converge to zero for $n \geq 3$, and the only possible nonzero differential to or from these groups has domain $E^2_{1,0}(n) \cong \HH_{1}(H_0(\Aut(F,p);R))_n$, which vanishes for $n >4$.  Hence 
$E^2_{-1,1}(n) \cong \HH_{-1}(H_1(\Aut(F,p);R))_n = 0$ for $n >4$.  Similarly the groups $E^r_{0,1}(n)$ converge to zero for $n \geq 5$ and admit no nonzero differentials for $n > 6$. We conclude 
$$ \HH_{-1}(H_1(\Aut(F,p);R))_n = 0 \qquad \text{for $n>4$,} \qquad \text{and} \qquad  \HH_{0}(H_1(\Aut(F,p);R))_n = 0 \qquad \text{for $n>6$}. $$ 
By \autoref{ThmPatztBoundedResolution}, there is a partial resolution $ \cM^1 \to \cM^0 \to H_1(\Aut(F,p);R)$  with $\cM^0$ an induced $\VIC^{\pm}(\Z/p\Z)$--module generated in degree $\leq 4$, and $\cM^0$ an induced $\VIC^{\pm}(\Z/p\Z)$--module generated in degree $\leq 6$.

We proceed by induction. Now assume that $j >1$ and that for all $1\leq i <j$ we have constructed a partial resolution of the $\VIC^{\pm}(\Z/p\Z)$--module $H_i(\Aut(F,p);R)$ $$ \cM^1 \to \cM^0 \to H_i(\Aut(F,p);R)$$ by induced modules with $\cM^{0}$ generated in degree $\leq \left( (\frac{13}{2})3^{2i-3}-\frac12 \right)$ and $\cM^{1}$ generated in degree $\leq \left( (\frac{13}{2})3^{2i-2}-\frac12 \right)$. By \autoref{ThmBoundedResolutionVICU}, we can extend this to a resolution $\cM^* \to H_i(\Aut(F,p);R)$ by induced $\VIC^{\pm}(\Z/p\Z)$--modules with $\cM^k$ generated in degree at most 
$$ \left( 3^{k-1} \left( \left(\frac{13}{2}\right)3^{2i-2}-\frac12 \right) + \frac12(3^{k-1}-1) \right)=  \left(\left(\frac{13}{2}\right)(3^{k-1})(3^{2i-2})-\frac12 \right)  \qquad \text{ for $k\geq 1$.}$$ \autoref{ThmPatztBoundedResolution} then implies that for $k\geq 1$, 
$$E^2_{k,i}(n) \cong  \HH_{k}(H_{i}(\Aut(F,p);R))_n = 0 \qquad \text{for $n >\left(\left(\frac{13}{2}\right)(3^{k})(3^{2i-2})-\frac12\right)$.}$$ 

In particular, for each $2 \leq r \leq j+1$,   $$E^2_{-1+r, j-r+1}(n) \cong \HH_{-1+r}(H_ {j-r+1}(\Aut(F,p);R)))_n =0  \qquad \text{ for $n >\left(\left(\frac{13}{2}\right)\left(3^{2(j-r+1)-2}\right)\left(3^{-1+r}\right)-\frac12\right)$.}$$ Hence, for $r \geq 2$, there are no nonzero differentials to or from $E^r_{-1, j}(n)$ for $r \geq 2$ and  $n >\left((\frac{13}{2})(3^{2j-3})-\frac12\right)$. Since $E^{\infty}_{-1, j}(n) = 0$ in this range, we conclude that 
$$ E^2_{-1, j}(n) =  \HH_{-1}(H_j(\Aut(F,p);R))_n = 0 \quad \text{ for $n >\left(\frac{13}{2}\right)\left(3^{2j-3}\right)-\frac12$.}$$ 

Similarly, for each $2 \leq r \leq j+1$, the group  $E^2_{r, j-r+1}(n) \cong \HH_{r}(H_ {j-r+1}(\Aut(F,p);R)))_n$ vanishes for $n >\left((\frac{13}{2})(3^{2(j-r+1)-2})(3^{r})-\frac12\right)$. This implies that there are no nonzero differentials to or from $E^r_{0, j}(n)$ for $r \geq 2$ and  $n >\left((\frac{13}{2})(3^{2j-2})-\frac12\right)$. Again $E^{\infty}_{0, j}(n) = 0$ in this range, so we conclude that 
$$ E^2_{0, j}(n) =  \HH_{0}(H_j(\Aut(F,p);R))_n = 0 \qquad \text{for $n >\left( \left(\frac{13}{2}\right)(3^{2j-2}) - \frac12 \right)$.}$$ 
By \autoref{ThmPatztBoundedResolution},  $H_j(\Aut(F,p);R)$ is generated in degree $\leq \left((\frac{13}{2})(3^{2j-3}) - \frac12\right)$ and related in degree $\leq \left( (\frac{13}{2})(3^{2j-2})-\frac12 \right)$. This completes the inductive step and concludes the proof. 
\end{proof}

\begin{remark} \label{remark3manifold}

Let $G=\pi_1(P)$ with $P$ an orientable prime $3$-manifold such that $\Mod(P) \twoheadrightarrow \Aut(G)$. Examples of such groups include $\Z$, $\Z/2$, $\Z/4$, $\Z/6$, and $\pi_1(\Sigma_g)$; see the introduction of Hatcher--Wahl \cite{HW}. Many of these groups admit surjections $\phi: G \m \Z/p\Z$ for some prime $p$. Given such a surjection, let $\Aut(G^{*n},\phi)$ denote the kernel of $\Aut(G^{*n})  \m GL_n(\Z/p\Z)$. Here $G^{*n}$ denotes the $n$-fold free product of $G$. An analogous stability result to \autoref{detailedAutp} can be proven for $H_i(\Aut(G^{*n},\phi);R)$ using \cite[Lemma 5.6]{RWW} to establish the analogue of \autoref{connectivityautomorphism} of \autoref{SSaut}.  As it is not known if $H_i(\Aut(G^{*n},\phi);R)$ is finitely generated for $G \neq \Z$, it is unclear if the Noetherian techniques of Putman--Sam \cite[Theorem D]{PS} apply to $H_i(\Aut(G^{*n},\phi);R)$ for $G \neq \Z$.

\end{remark}

\section{Twisted stability results}
\label{TwistedStabilityResults}

\subsection{Tensor products of $\Cat$--modules}

Before we can prove our twisted homological stability theorems, we first must establish some algebraic properties of tensor products of $\VIC^{\U}(\bk)$-- and $\SI(\bk)$--modules. Let $\Cat$ be one of the categories $\SI(\bk)$ or $\VIC^{\U}(\bk)$. Let $\cA$ and $\cB$ be $\Cat$--modules over a commutative ring $R$. Let $\cA \otimes_R \cB$ be the $\Cat$--module defined by the pointwise tensor product, with $$(\cA \otimes_R \cB)_n \cong \cA_n \otimes_R \cB_n$$ and maps $(\cA \otimes_R \cB)_m \m (\cA \otimes_R \cB)_n$ given by the tensor product of the maps $\cA_m \m \cA_n$ with the maps $\cB_m \m \cB_n$.

Our first goal of this section is to determine bounds on the generation and presentation degree of the tensor product $\cA \otimes_R \cB$ in terms of the bounds on the factors $\cA$ and $\cB$.  We begin by recalling some connectivity results from Miller--Patzt--Wilson \cite{MPW} and Mirzaii--van der Kallen \cite{MvdK}.

\begin{definition}
Given a vector space $V$ and subspaces $U$ and $W$, let $\PBC_\bullet(V,U,W)$ be the augmented semi-simplicial set with value on an ordered set $X$ given by \[ \PBC_X(V,U,W)=\{ \; (f,C)  \in \Hom_{\VIC(\bk)}(v(X),V) \; | \; \im(f) \subseteq U, W \subseteq C  \; \} . \] The augmented semi-simplicial structure is induced by composition of ordered sets. 
\end{definition}
\begin{theorem}[Miller--Patzt--Wilson {\cite[Theorem 2.20]{MPW}}] \label{PBCVUW}
For $\bk$ a field, $||\PBC_\bullet(V,U,W)||$ is $\displaystyle \left(\frac{\dim U - \dim W -3}{2}\right)$--connected.
\end{theorem}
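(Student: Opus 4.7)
My plan is to establish the connectivity bound by induction on $d := \dim U - \dim W$, modeled on the classical proofs of connectivity for partial basis complexes (van der Kallen, Charney, Mirzaii--van der Kallen). In the base cases $d \leq 2$, the bound $\tfrac{d-3}{2}$ is at most $-\tfrac{1}{2}$, so the claim is either vacuous or reduces to nonemptiness when $d \geq 1$; the latter is immediate, since any line $L \subseteq U$ not contained in $W$ admits a complement $C$ in $V$ with $W \subseteq C$ (using that $W \cap L = 0$ to extend $W$ to such a complement).

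For the inductive step, the key local computation is that the link of a vertex $v_0 = (f_0, C_0)$ (where $L_0 := \im(f_0) \subseteq U$ is a line and $C_0 \supseteq W$ is a complement of $L_0$ in $V$) is naturally isomorphic to $\PBC_\bullet(C_0,\, C_0 \cap U,\, W)$. Indeed, an extending simplex $(g,D)$ of $v_0$ in $\PBC_\bullet(V,U,W)$ must have $\im(g) \subseteq C_0 \cap U$ and $D \subseteq C_0$ a complement of $\im(g)$ in $C_0$ containing $W$; conversely any such datum in $\PBC_\bullet(C_0, C_0 \cap U, W)$ extends $v_0$, since $V = L_0 \oplus \im(g) \oplus D$. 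Since $\dim(C_0 \cap U) - \dim W = d - 1$, the inductive hypothesis gives that each such link is $\tfrac{d-4}{2}$-connected --- precisely one degree below the target connectivity. I would then upgrade this local connectivity to global connectivity via a standard bad-simplex/link induction for semisimplicial sets (in the spirit of Hatcher--Wahl and Randal-Williams--Wahl): given a map $\phi \colon S^n \to \|\PBC_\bullet(V,U,W)\|$ with $n \leq \tfrac{d-3}{2}$, put $\phi$ in general position with respect to a chosen coloring, and use link connectivity to cone off each face across an appropriately chosen vertex.

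The hardest step will be executing the local-to-global passage rigorously: for semisimplicial sets, link connectivity alone does not automatically imply the desired bound, and one typically must verify a Cohen--Macaulay-type property or deploy a carefully arranged bad-simplex argument. If the direct link induction proves technically awkward due to the simultaneous constraints $\im(f) \subseteq U$ and $W \subseteq C$, I would instead pursue a reduction strategy: observe that specifying a complement $C \supseteq W$ of $\im(f)$ in $V$ is equivalent, after passing to the quotient $V/W$, to specifying a complement of the image of $\im(f)$ in $V/W$. This should permit comparing $\PBC_\bullet(V,U,W)$, via a Quillen-style fiber argument, with a complex of partial bases in $V/W$ whose image is constrained to $(U+W)/W$, to which the Mirzaii--van der Kallen connectivity results for complexes of complemented partial bases apply directly.
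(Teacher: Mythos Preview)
The paper does not give its own proof of this statement: it is quoted verbatim as \cite[Theorem 2.20]{MPW} and used as a black box (specifically, inside the proof of \autoref{tensorOfInduced}). So there is nothing in the present paper to compare your argument against.

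That said, your sketch is broadly in the spirit of how such connectivity bounds are established in \cite{MPW} and the literature it draws on (Charney, van der Kallen, Mirzaii--van der Kallen): identify links with smaller instances of the same complex and run a bad-simplex argument. Your link computation is correct. Your caveat is also the right one: the passage from ``all links are $\tfrac{d-4}{2}$--connected'' to ``the whole complex is $\tfrac{d-3}{2}$--connected'' is exactly the nontrivial step, and in the semisimplicial setting it requires the extra structure you allude to (a transitivity/Cohen--Macaulay-type input, or a carefully organized retraction onto a star). Your alternate reduction via $V/W$ is also reasonable and is close to how \cite{MPW} actually organizes things. But none of this is content of the paper you were asked to work from; here the theorem is simply imported.
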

In particular, $||\PBC_\bullet(V,U,W)||$ is non-empty if 
$\dim U \geq 1+ \dim W$ and is connected if $\dim U \geq 3+ \dim W$. 

Given a symplectic vector space $V$, following Mirzaii--van der Kallen \cite{MvdK}, Miller--Patzt--Wilson define augmented semi-simplicial sets $\SPB_{\bullet}(V)$  \cite[Definition 2.30]{MPW} and $\MPB_{\bullet}(V)$   \cite[Definition 2.33]{MPW}. We will not define these two objects here, but merely recall the following: given a (not necessarily symplectic) subspace $W \subseteq V$, we obtain an augmented semi-simplicial set $\SPB_{\bullet}(V ) \cap  \Lk^{\MPB(V)}_{\bullet}(W)$ defined on an ordered set $X$ by
$$ \SPB_{X}(V ) \cap  \Lk^{\MPB(V)}_{X}(W) = \{ T \in \Hom_{\SI(\bk)}( s(X), V) \; | \; W\subseteq \im(T)^{\perp} \}.$$

\begin{theorem}[Mirzaii--van der Kallen {\cite[Theorem 7.4]{MvdK}}; see Miller--Patzt--Wilson {\cite[Theorem 2.34]{MPW}}] \label{SPBMPB}
Let $\bk$ be a field. Let $V$ be a symplectic vector space. Let $W$ be a subspace of $V$, and $U$ a maximal symplectic subspace of $W$. Then  $||\SPB_{\bullet}(V ) \cap  \Lk^{\MPB(V)}_{\bullet}(W)||$ is $$\displaystyle \left(\frac{\frac12 \dim V + \frac12 \dim U  -  \dim W-4}{2}\right)\text{--connected.}$$
\end{theorem}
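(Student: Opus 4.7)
The plan is to induct on $\dim(W \cap W^\perp) = \dim W - \dim U$, reducing to the case where the restriction of the symplectic form to $W$ is non-degenerate.

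For the base case $W = U$ symplectic, the subspace $W^\perp$ is itself a symplectic subspace of $V$ of dimension $\dim V - \dim U$. The condition $W \subseteq \im(T)^\perp$ becomes $\im(T) \subseteq W^\perp$, and a symplectic embedding $s(X) \to V$ with image in $W^\perp$ is the same as a symplectic embedding $s(X) \to W^\perp$. Hence $\SPB_\bullet(V) \cap \Lk^{\MPB(V)}_\bullet(W) = \SPB_\bullet(W^\perp)$, and the claimed bound reduces to the pure symplectic case of Mirzaii--van der Kallen (the case $W = 0$ applied to $W^\perp$ in place of $V$), which I would take as the core input.

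For the inductive step, fix a nonzero vector $v \in W \cap W^\perp$. Since $U$ is non-degenerate, the linear functional $u \mapsto \langle v, u \rangle$ restricted to $U^\perp$ is nonzero, so there exists $v^* \in U^\perp$ with $\langle v, v^* \rangle = 1$. Set $V' := \{v, v^*\}^\perp$, a symplectic subspace of $V$ of dimension $\dim V - 2$, and $W' := W \cap V'$. A direct check shows $\dim W' = \dim W - 1$, that $U \subseteq W'$ remains a maximal symplectic subspace, and that $\dim(W' \cap (W')^{\perp_{V'}}) = \dim(W \cap W^\perp) - 1$. The arithmetic is arranged so that the connectivity bound asserted for $(V', W')$ is numerically equal to the one asserted for $(V, W)$, so the inductive hypothesis delivers exactly the target connectivity on the smaller complex.

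The main obstacle is then to transfer connectivity from the $(V', W')$-complex into the full $(V, W)$-complex. I would approach this via a bad-simplex argument in the style of Hatcher--Vogtmann--Wahl: stratify the simplices of $\SPB_\bullet(V) \cap \Lk^{\MPB(V)}_\bullet(W)$ by how far they fail to lie in $V'$ (equivalently, by the span of the vectors in the simplex that are not $v^*$-orthogonal), and identify the link of a bad stratum with a join of a smaller instance of $\SPB_\bullet \cap \Lk^{\MPB}_\bullet$ with a $\PBC$-type complex whose connectivity is given by \autoref{PBCVUW}. Feeding these link connectivities together with the inductive hypothesis into the standard spectral sequence for a filtered semi-simplicial set should yield the stated bound. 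The hardest part will be making the bad-simplex stratification match the dimension arithmetic exactly, so that no loss of connectivity occurs at each inductive step.
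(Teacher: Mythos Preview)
The paper does not give its own proof of this statement: it is quoted verbatim as a result of Mirzaii--van der Kallen (with the reformulation appearing in Miller--Patzt--Wilson), and is used as a black box in the proof of \autoref{tensorOfInduced}. So there is no proof in the paper to compare your proposal against.

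As for the proposal itself, the inductive setup is sound: your base case reduction to $\SPB_\bullet(W^\perp)$ is correct, the choice of $v^* \in U^\perp$ with $\langle v, v^*\rangle = 1$ is legitimate because $v \in W^\perp \subseteq U^\perp$ and $U^\perp$ is non-degenerate, and your dimension computations for $W' = W \cap V'$ (using that $W \subseteq v^\perp$ forces $W' = W \cap \ker\langle -, v^*\rangle$) are right, so the numerics of the connectivity bound are preserved exactly at each step. The genuine gap is the transfer step: you have not identified which augmented semi-simplicial subset of $\SPB_\bullet(V) \cap \Lk^{\MPB(V)}_\bullet(W)$ corresponds to $(V', W')$, nor why the links of bad simplices split as joins with a $\PBC$-type factor, nor checked that the resulting link connectivities are high enough. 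In Mirzaii--van der Kallen's actual argument this is handled via their general machinery for posets satisfying chain conditions (their conditions (F) and the $\mathcal{O}(\underline{\phantom{x}}, n)$ framework), rather than a direct Hatcher--Vogtmann bad-simplex argument; if you want to carry out your version you would need to make the stratification by ``distance from $V'$'' precise and verify the join decomposition, which is where the real work lies.
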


Using \autoref{PBCVUW} and \autoref{SPBMPB}, we will prove the following.

\begin{lemma} \label{tensorOfInduced} Let $\bk$ be a field and $R$ a commutative ring with $\U \subseteq R^{\times}$. If $\Cat=\VIC^{\U}(\bk)$, then $\cM(a) \otimes_R \cM(b)$ has generation degree $\leq a+b+ \min(a,b)$ and presentation degree $\leq a+b+ \min(a,b)+2$. If $\Cat=\SI(\bk)$, then $\cM(a) \otimes_R \cM(b)$ has generation degree $\leq a+b+ \min(a,b)+1$ and presentation degree $\leq a+b+ \min(a,b)+4$. 
\end{lemma}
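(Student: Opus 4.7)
The plan is to bound the generation and relation degrees by computing the central stability homology of $\cM(a) \otimes_R \cM(b)$ and invoking \autoref{ThmPatztBoundedResolution}: bounds on $\HH_{-1}$ control generation degree, and bounds on $\HH_0$ control relation degree. The first step is to unpack the augmented central stability chains $\CC_\bullet(\cM(a) \otimes_R \cM(b))_V$ fiberwise over each basis element of $(\cM(a) \otimes_R \cM(b))_V$. A direct check from the definitions identifies the fiber above $((T_1,C_1),(T_2,C_2))$ with $\PBC_\bullet(V, C_1 \cap C_2, T_1(\bk^a) + T_2(\bk^b))$ in the $\VIC^{\U}(\bk)$ case, and with $\SPB_\bullet(V) \cap \Lk^{\MPB(V)}_\bullet(T_1(\bk^{2a}) + T_2(\bk^{2b}))$ in the $\SI(\bk)$ case. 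Bounding $\HH_i$ therefore reduces to connectivity bounds for these fibers, to be extracted from \autoref{PBCVUW} and \autoref{SPBMPB}.

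The crucial dimensional input in the $\VIC^{\U}(\bk)$ case is the estimate
\[ \dim\bigl((C_1 \cap C_2) \cap (T_1(\bk^a) + T_2(\bk^b))\bigr) \leq \min(a,b). \]
I would prove this by showing that any $v$ in this intersection, written $v = t_1 + t_2$ with $t_i \in T_i(\bk^{a_i})$, must equal the $C_1$-component of $t_2$ along the splitting $V = T_1(\bk^a) \oplus C_1$: decomposing $t_2 = s + c$ with $s \in T_1(\bk^a), c \in C_1$, one finds $v - c = t_1 + s \in T_1(\bk^a) \cap C_1 = 0$, so $v = c \in \pi_{C_1}(T_2(\bk^b))$, of dimension at most $b$; the symmetric argument yields the bound $\leq a$. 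Combined with $\dim(C_1 \cap C_2) \geq n - a - b$, this gives $\dim(C_1 \cap C_2) - \dim\bigl((C_1 \cap C_2) \cap (T_1 + T_2)\bigr) \geq n - a - b - \min(a,b)$. Feeding this into \autoref{PBCVUW} in its refined form---where the numerator involves only $\dim U - \dim(U \cap W)$, since any $(f, C) \in \PBC_\bullet(V, U, W)$ automatically has $\im f \cap W \subseteq \im f \cap C = 0$---yields $(-1)$-connectedness of each fiber for $n > a + b + \min(a,b)$ and $0$-connectedness for $n > a + b + \min(a,b) + 2$. An application of \autoref{ThmPatztBoundedResolution} then gives the $\VIC^{\U}(\bk)$ bounds.

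The $\SI(\bk)$ case follows the same strategy using \autoref{SPBMPB}. Two symplectic features account for the larger constants: the sum $W = T_1(\bk^{2a}) + T_2(\bk^{2b})$ is generally not symplectic, so the maximal symplectic subspace $U$ appearing in \autoref{SPBMPB} has dimension $\dim W - \dim \mathrm{rad}(W)$, and a symplectic analog of the dimension estimate---bounding $\dim \mathrm{rad}(W)$ in terms of $\dim(T_1 \cap T_2)$ and $\min(a,b)$ via a similar bilinear-form argument with the symplectic complements playing the role of the linear complements---contributes the $+1$ in the generation bound; the connectivity constant $-4$ in \autoref{SPBMPB} versus $-3$ in \autoref{PBCVUW} accounts for the $+4$ (versus $+2$) in the relation bound. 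The main technical obstacle throughout is the refined connectivity analysis of the fiber complexes: the naive bounds from the cited theorems in terms of $\dim U - \dim W$ yield only presentation degrees of order $2(a+b)$, and obtaining the sharper constants in the statement requires both exploiting the fact that $W$ obstructs only through $U \cap W$ and using the dimension estimate on $(C_1 \cap C_2) \cap (T_1 + T_2)$ and its symplectic counterpart.
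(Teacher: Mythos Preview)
Your overall strategy---unwinding $\CC_\bullet(\cM(a)\otimes_R\cM(b))_V$ as a direct sum of fiber complexes indexed by pairs of morphisms, identifying these fibers with $\PBC$ or $\SPB\cap\Lk^{\MPB}$ complexes, applying the connectivity theorems, and concluding via \autoref{ThmPatztBoundedResolution}---is exactly the paper's approach. The differences lie in how the sharp connectivity is extracted.

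In the $\VIC^{\U}(\bk)$ case, you assert a ``refined form'' of \autoref{PBCVUW} with numerator $\dim U-\dim(U\cap W)$, justified by the observation that every simplex $(f,C)$ has $\im f\cap W=0$. That observation is correct but does not by itself prove the refined connectivity bound: the condition $W\subseteq C$ depends on all of $W$, not just $U\cap W$. The paper obtains the same numerical outcome differently: it invokes \cite[Lemma 2.18]{MPW} to produce an isomorphism
\[
\PBC_\bullet\bigl(V,\,C_a\cap C_b,\,\im T_a+\im T_b\bigr)\;\cong\;\PBC_\bullet\bigl(C_a,\,C_a\cap C_b,\,(\im T_a+\im T_b)\cap C_a\bigr),
\]
using that $V=C_a\oplus\im T_a$ with $U\subseteq C_a$ and $\im T_a\subseteq W$, and then applies \autoref{PBCVUW} in its stated form with the bound $\dim\bigl((\im T_a+\im T_b)\cap C_a\bigr)\leq\min(a,b)$. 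Your dimension estimate on $(C_1\cap C_2)\cap(T_1+T_2)$ is correct and essentially the same computation, but you need this reduction lemma (or an equivalent argument) to close the gap; the refined bound you want does follow from it, but not from the sentence you wrote.

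In the $\SI(\bk)$ case you overcomplicate matters. The paper needs no ``symplectic analog of the dimension estimate'' and never bounds $\dim\mathrm{rad}(W)$: since $\im T_a$ (with $a=\max(a,b)$) is already a symplectic subspace of $W=\im T_a+\im T_b$, the maximal symplectic subspace $U$ in \autoref{SPBMPB} automatically satisfies $\dim U\geq 2\max(a,b)$, and together with $\dim W\leq 2(a+b)$ this plugs directly into the connectivity formula. Your attribution of the constants is also off: the $+1$ in the generation bound comes from the $-4$ (versus $-3$) in \autoref{SPBMPB}, while the $+4$ (versus $+2$) in the presentation bound comes both from that shift and from the gap requirement $d_{i+1}-d_i\geq 3$ (versus $\geq 2$) in \autoref{ThmPatztBoundedResolution} for $\SI$.
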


We will see in \autoref{RemarkBoundsSharp} that the bounds on generation degree in this theorem are sharp when $\C=\VIC(\bk)$, and consequently that the tensor products  $\cM(a) \otimes_R \cM(b)$ of representable $\VIC(\bk)$--modules are not in general induced $\VIC(\bk)$--modules. 

\begin{proof}[Proof of \autoref{tensorOfInduced}]
Let us first consider the case that $\Cat=\VIC(\bk)$. We may assume $a,b>0$ since otherwise $$\cM(a) \otimes_R \cM(0) = \cM(a)$$ and the result is trivial. Let $X$ be an ordered set of size $i+1$ and let $V$ be a vector space of dimension $n$.
By \autoref{ThmPatztBoundedResolution}, it suffices to show 
\begin{align*}
\HH_{-1}\big(\cM(a) \otimes_R \cM(b)\big)_V=0 \qquad & \text{ for $n> a+b+ \min(a,b)$, and that } \\ 
 \HH_{0}\big(\cM(a) \otimes \cM(b)\big)_V=0 \qquad & \text{ for $n> a+b+ \min(a,b)+2$. }
 \end{align*}

We have
\begin{align*}
\CC_X\big(M(a)  \otimes_R & M(b)\big)_V    = \bigoplus_{(T,C) \in \Hom_{\VIC(\bk)}(v(X),V)} \big(\cM(a) \otimes_R \cM(b) \big)_{C}  \\[1em]
&\cong  \bigoplus_{(T,C) \in   \Hom_{\VIC(\bk)}(v(X),V)} R \left [ \Hom_{\VIC(\bk)}(\bk^a,C)  \right ] \otimes_R R \left [ \Hom_{\VIC(\bk)}(\bk^b,C)  \right ] \\[1em] 
&\cong R \left [ \bigsqcup_{(T,C) \in   \Hom_{\VIC(\bk)}(v(X),V)}  \Hom_{\VIC(\bk)}(\bk^a,C)   \times \Hom_{\VIC(\bk)}(\bk^b,C)  \right ] .
\end{align*}
An element in the set $$ \bigsqcup_{(T,C) \in   \Hom_{\VIC(\bk)}(v(X),V)}  \Hom_{\VIC(\bk)}(\bk^a,C)   \times \Hom_{\VIC(\bk)}(\bk^b,C)  $$ is a triple $$ \Big( (T,C), (T_a,C_a), (T_b,C_b) \Big)$$ 
with $$T: v(X) \to V, \quad V \cong C \oplus \im(T),  \qquad T_a: \bk^a \to C, \quad C \cong C_a \oplus \im(T_a), \qquad T_b: \bk^b \to C, \quad C \cong C_b \oplus \im(T_b).$$ When $X = \{0, 1, \ldots, i\}$ and $v(X)=$span$_{\bk}(e_0, e_1\ldots, e_i)$, then the face map $d_j$ maps the above summand to the summand indexed as follows. Let $T_{\setminus j}$ denote the restriction of $T$ to span$_{\bk}(e_0, e_1\ldots, \hat{e_j}, \ldots e_i)$. Then the image under $d_j$ is the summand associated to the triple
$$ \Big(  \big(T_{\setminus j},  (C\oplus \mathrm{span}(T(e_j))\big), \quad \big(T_a, (C_a\oplus \mathrm{span}(T(e_j))\big),  \quad \big(T_b, ( C_b\oplus \mathrm{span}(T(e_j))\big) \Big).$$
We can re-index our set to identify $ \Big( (T,C), (T_a,C_a), (T_b,C_b) \Big)$ with the following triple
$$ \Big( (T,C), (T_a,(C_a\oplus \im(T))), (T_b,(C_b \oplus \im(T))) \Big)$$  
in $$ \Hom_{\VIC(\bk)}(v(X),V) \times   \Hom_{\VIC(\bk)}(\bk^a,V) \times \Hom_{\VIC(\bk)}(\bk^b,V) $$ 
satisfying $$  \im(T) \subseteq \Big( (C_a+\im(T)) \cap (C_b+\im(T)) \Big) \qquad \text{and}\quad \Big( \im(T_a) +\im(T_b) \Big) \subseteq C. $$
The face map $d_j$ now acts only on $(T,C)$ while fixing the pairs $(T_a, (C_a\oplus \im(T)))$ and $(T_b,(C_b \oplus \im(T)))$. 
Conversely, we can recover $\Big( (T,C), (T_a,C_a), (T_b,C_b) \Big)$ from this triple using the equalities 
$$ C_a = C \cap (\im(T) \oplus C_a) \qquad C_b = C \cap (\im(T) \oplus C_b); $$ 
see \cite[Proposition 2.9 (vi)]{MPW}.  Thus, we obtain the following isomorphism of augmented semi-simplicial $R$--modules. 
\begin{align*}
&\CC_{X}\big(\cM(a)  \otimes_R  \cM(b)\big)_V      \\
&\cong R \left [ \bigsqcup_{\big( (T_a,C_a), (T_b,C_b) \big) \in   \Hom_{\VIC(\bk)}(\bk^a,V) \times \Hom_{\VIC(\bk)}(\bk^b,V)}  \PBC_{X} \Big (V,C_a \cap C_b, \im(T_a) + \im(T_b)  \Big ) \right ] .
   \end{align*}  
   
Suppose without loss of generality that $a \geq b$, and fix a pair $\big( (T_a,C_a), (T_b,C_b) \big)$. Because $$V = C_a \oplus \im(T_a), \qquad \qquad C_a \cap C_b \subseteq C_a \qquad \text{and} \qquad \im(T_a) \subseteq \big(\im(T_a) + \im(T_b)\big),$$ \cite[Lemma 2.18]{MPW} implies that 
$$ \PBC_{\bullet} \Big (V,C_a \cap C_b, \im(T_a) + \im(T_b) \Big) \cong \PBC_{\bullet} \Big (C_a ,C_a \cap C_b,  \big(\im(T_a) + \im(T_b)\big) \cap C_a \Big).$$

   Taking homology yields 
   \begin{align*}
  & \HH_i\Big(\cM(a) \otimes_R \cM(b)\Big)_V  \\ 
   \cong &   \bigoplus_{\big( (T_a,C_a), (T_b,C_b) \big ) \in   \Hom_{\VIC(\bk)}(\bk^a,V) \times \Hom_{\VIC(\bk)}(\bk^b,V)}   \widetilde H_i\Big(\big| \big| \PBC_{\bullet} \Big (C_a ,C_a \cap C_b,  \big(\im(T_a) + \im(T_b)\big) \cap C_a \Big) \big|\big| ;R\Big) .  
   \end{align*}
   
   Observe that \[ \dim (C_a \cap C_b) \; \; \geq \;\;n-a-b \qquad  \text{ and } \qquad  \dim \left( \big(\im(T_a) + \im(T_b)\big) \cap C_a \right) \; \; \leq \;\; b \;\; =\;\; \min(a,b).\] By \autoref{PBCVUW}, \[\widetilde H_{-1}\Big(\big| \big| \PBC_\bullet \Big (V,C_a \cap C_b, \im(T_a) + \im(T_b)  \Big ) \big|\big| ;R \Big) \cong 0 \text{ for } n>a+b+\min(a,b)\] and \[\widetilde H_{0}\Big(\big| \big| \PBC_\bullet \Big (V,C_a \cap C_b, \im(T_a) + \im(T_b)  \Big ) \big|\big|;R \Big) \cong 0 \text{ for }n>a+b+\min(a,b)+2.\] 
   The claim now follows for $\Cat=\VIC(\bk)$. 
   
   Now suppose that $\Cat=\VIC^{\U}(\bk)$, and again we may assume that $a,b >0$. Recall that  $$\Hom_{\VIC}(R^d, R^n) = \Hom_{\VIC^{\U}}(R^d, R^n)  \qquad \text{whenever $d \neq n$}.$$ 
   Thus the complexes  $\CC_{\bullet}\big(\cM(a)  \otimes_R  \cM(b)\big)_n $ associated $\VIC(\bk)$ and to $\VIC^{\U}(\bk)$ have the same $p$--chains for $p\leq 0$ when $n> a+b+\min(a,b)$, and for $p \leq 2$ when $n> a+b+\min(a,b)+2$. Hence, the results proved for $\VIC(\bk)$ in homological degree $-1$ and $0$ also hold for $\VIC^{\U}(\bk)$. 
   
   Finally, consider $\Cat=\SI(\bk)$, and again let $a,b>0$. Let $X$ be an ordered set of size $i+1$ and let $V$ be a  symplectic vector space of dimension $2n$. Then 
\begin{align*}
\CC_X\big(\cM(a)  \otimes_R & \cM(b)\big)_V    = \bigoplus_{T \in \Hom_{\VIC(\bk)}(s(X),V)} \big(\cM(a) \otimes_R \cM(b) \big)_{\im(T)^{\perp}} \\[1em]
&\cong R \left [ \bigsqcup_{T \in \Hom_{\VIC(\bk)}(s(X),V)}  \Hom_{\SI(\bk)}(\bk^{2a},\im(T)^{\perp})   \times \Hom_{\VIC(\bk)}(\bk^{2b},\im(T)^{\perp})  \right ] .
\end{align*}
Again we have the $R$--vector space on triples of symplectic maps $(T, T_a, T_b)$ with $$ T: s(X) \to V, \qquad T_a: \bk^{2a} \to \im(T)^{\perp}, \qquad T_b :\bk^{2b} \to \im(T)^{\perp}.$$ 
Equivalently, this is the space of triples  $(T, T_a, T_b)$ with $$ T_a: \bk^{2a} \to V, \qquad T_b :\bk^{2b} \to V,  \qquad T: s(X) \to \big(\im(T_a) + \im(T_b)\big)^{\perp}.$$ 
%
We note that $\big(\im(T_a) + \im(T_b)\big)^{\perp}$ need not be a symplectic subspace. Thus, in the notation of Miller--Patzt--Wilson \cite{MPW}, we have an isomorphism of semi-simplicial $R$--modules
\begin{align*}
\CC_{\bullet}\big(\cM(a)  \otimes_R & \cM(b)\big)_V   \\
&\cong R \left [ \bigsqcup_{(T_a, T_b) \in \Hom_{\SI(\bk)}(\bk^{2a},V)   \times \Hom_{\VIC(\bk)}(\bk^{2b},V) }  \SPB_{\bullet}(V)\cap \Lk_{\bullet}^{\MPB(V)}\Big(\im(T_a) +\im(T_b)\Big)  \right ] .\\
\end{align*}

Suppose that $a \geq b$. Then $(\im(T_a) +\im(T_b))$ has dimension at most $2a+2b$, and contains the symplectic subspace $\im(T_a)$ of dimension $2a$.  By \autoref{SPBMPB}, then, the homology groups 
$$H_i\Big(\CC_{\bullet}\big(\cM(a)  \otimes_R  \cM(b)\big)_V\Big) = 0$$ for $$ i \leq \left( \frac{n +a -(2a+2b) -4}{2} \right) = \left( \frac{n-a-b - \min(a,b) -4}{2} \right).$$
In particular,  
\begin{align*}
\HH_{-1}\Big(\cM(a)  \otimes_R \cM(b)\Big)_n = 0 \qquad &  \text{for } n > a+b+\min(a,b)+1, \quad \text{and}\\
\HH_{0}\Big(\cM(a)  \otimes_R   \cM(b) \Big)_n = 0 \qquad &  \text{for }  n > a+b+\min(a,b)+3.
\end{align*}
Thus by \autoref{ThmPatztBoundedResolution}, we can conclude that the $\SI(\bk)$--module $\cM(a) \otimes_R \cM(b)$ is generated in degree $\leq a+b+\min(a,b)+1$ and presented in degree $\leq a+b+\min(a,b)+4$. 
\end{proof}

The full statement of Miller--Patzt--Wilson \cite[Theorem 2.20]{MPW} also applies to the case when $\bk$ is a PID. A similar argument to our proof of \autoref{tensorOfInduced} would 
give an analogue of \autoref{tensorOfInduced} in this case, with a worse stable range. 

\begin{remark} \label{RemarkBoundsSharp} Let $\Cat$ be the category $\VIC^{\U}(\bk)$ or $\SI(\bk)$ for $\bk$ a field, and let $R$ be a commutative ring. We remark that, in contrast to the case of $\FI$--modules, the tensor product $\cM(a) \otimes_R \cM(b)$ of representable $\Cat$--modules over $R$ is not generated in degree $\leq (a+b)$.   Suppose that $a\geq b$. First let $\Cat=\VIC(\bk)$. We can show that the bounds on the generation degree given in \autoref{tensorOfInduced} are sharp. Let $e_1, \ldots, e_n$ denote the standard $\bk$--basis for the object $\bk^n$ of $\VIC(\bk)$.    Consider an $R$--basis element
$$(f, C_f) \otimes (g, C_g) \in \cM(a)_n \otimes_R \cM(b)_n$$
for $n=a+2b$ with 
\begin{align*}
\im(f)=& \; \mathrm{span}(e_1, e_2, \ldots, e_a) \qquad \\
 C_f =&\; \mathrm{span}(e_1+e_{a+1}, e_2+e_{a+2}, \ldots, e_b+e_{a+b}, \; \;  e_{a+b+1}, \ldots, e_{a+2b}); \\
\im(g)=&\; \mathrm{span}(e_{a+1}, e_{a+2}, \ldots, e_{a+b}) \qquad\\  
C_g =&\; \mathrm{span}(e_1+e_{a+1}, e_2+e_{a+2}, \ldots, e_b+e_{a+b},  \; \; e_{b+1}, \ldots, e_a, \; \; \\ &  e_{a+1}+e_{a+b+1}, e_{a+2}+e_{a+b+2}, \ldots, e_{a+b}+e_{a+2b}).
\end{align*}
Since $$C_g \cap C_g = \mathrm{span}(e_1+e_{a+1}, e_2+e_{a+2}, \ldots, e_b+e_{a+b}) \subseteq \big(\mathrm{im}(f) +\mathrm{im}(g)\big),$$  it follows that $(f, C_f) \otimes (g, C_g)$ is not in the image of $\cM(a)_n \otimes_R \cM(b)_n$ for any $n<a+2b$. We make an additional observation pointed out to us by Rohit Nagpal: when $\bk$ is finite, the dimension of $\cM(a)_n \otimes_R \cM(b)_n$ grows too slowly in $n$ for $\cM(a) \otimes_R \cM(b)$ to contain induced representations of the form $\cM(W)$ with $W$ supported in degree $>(a+b)$. This implies that, when $a,b >0$, the $\VIC(\bk)$--module $\cM(a) \otimes_R \cM(b)$ is not an induced module.\\
 Similarly, let $a \geq b>0$, and consider the $\SI(\bk)$--module $\cM(a) \otimes_R \cM(b)$. We will show that it too has generators in degree $n=a+2b$. Let $v_1, w_1, v_2, w_2, \ldots, v_n, w_n$ denote the standard symplectic basis for $\bk^{2n}$. Consider a basis element 
$$f\otimes g \in \cM(a)_n \otimes_R \cM(b)_n$$
for $n=a+2b$ with 
\begin{align*}
\im(f) &= \mathrm{span}(v_1, w_1, \ldots, v_a, w_a) \qquad \text{and} \\ 
\im(g) &= \mathrm{span}(v_1+v_{a+1}, w_1+v_{a+2}, v_2+v_{a+3},\ldots, v_b+v_{a+2b-1}, w_b+v_{a+2b}).
\end{align*}
Then $$\Big(\im(f) + \im(g)\Big)= \mathrm{span}(v_1, w_1, v_2, w_2, \ldots, v_a, w_a, \; v_{a+1}, v_{a+2}, \ldots, v_{a+2b})$$ is not contained in any proper symplectic subspace, and so $f\otimes g$ is not in the image of $\cM(a)_n \otimes_R \cM(b)_n$  for any $n<a+2b$. 
\end{remark}

 We can now use the results of \autoref{tensorOfInduced} to establish bounds on the generation and presentation degree of arbitrary tensor products.

\begin{proposition} \label{ResolvingTensors} Let $\Cat$ be $\SI(\bk)$ or $\VIC^{\U}(\bk)$. Let $\cA$ and $\cB$ be $\Cat$--modules over a commutative ring $R$ with generation degrees $\leq d_\cA$ and $\leq d_\cB$ respectively, and relation degrees $\leq r_\cA$ and $ \leq r_\cB$ respectively.  If $\Cat=\VIC^{\U}(\bk)$, then $\cA \otimes_R \cB$ has generation degree $$\leq \Big(d_\cA+d_\cB+\min(d_{\cA}, d_{\cB}) \Big)$$ and relation degree $$\leq \max\Big(d_\cA+r_\cB+\min(d_\cA, r_\cB), \; r_\cA+d_\cB+\min(r_\cA,d_\cB),\; d_\cA+d_\cB+\min(d_{\cA}, d_{\cB})+2\Big).$$ 
If $\Cat=\VIC^{\U}(\bk)$, then $\cA \otimes_R \cB$ has generation degree $$\leq \Big(d_\cA+d_\cB+\min(d_{\cA}, d_{\cB}) +1 \Big)$$ and relation degree $$\leq \max\Big(d_\cA+r_\cB+\min(d_\cA, r_\cB) +1, \; r_\cA+d_\cB+\min(r_\cA,d_\cB) +1,\; d_\cA+d_\cB+\min(d_{\cA}, d_{\cB})+4\Big).$$

\end{proposition}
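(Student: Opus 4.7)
The plan is to tensor induced partial presentations of $\cA$ and $\cB$ and control each term using \autoref{tensorOfInduced}. Fix presentations $\cM^1_\cA \to \cM^0_\cA \to \cA \to 0$ and $\cM^1_\cB \to \cM^0_\cB \to \cB \to 0$ by induced modules, with $\cM^0_\cA, \cM^0_\cB, \cM^1_\cA, \cM^1_\cB$ generated in degrees $\leq d_\cA$, $\leq d_\cB$, $\leq r_\cA$, $\leq r_\cB$, respectively. Right exactness of $-\otimes_R-$ in each variable yields the exact sequence
\[ \cM^0_\cA \otimes_R \cM^1_\cB \;\oplus\; \cM^1_\cA \otimes_R \cM^0_\cB \longrightarrow \cM^0_\cA \otimes_R \cM^0_\cB \longrightarrow \cA \otimes_R \cB \longrightarrow 0. \]

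The main bootstrap I will establish is the following: for any induced modules $\cM(V)$ and $\cM(W)$ with $V, W$ supported in degrees $\leq d$, $\leq e$ respectively, $\cM(V) \otimes_R \cM(W)$ has generation degree $\leq d+e+\min(d,e)$ for $\VIC^{\U}(\bk)$ (respectively $+1$ for $\SI(\bk)$) and presentation degree $\leq d+e+\min(d,e)+2$ (respectively $+4$). To prove it, I would surject $V$ and $W$ from direct sums of the free $\CB$--modules $R[\G_m]$, apply the exact functor $\cM$ of \autoref{MExact} to obtain $\bigoplus_m \cM(m)^{\oplus c_m} \twoheadrightarrow \cM(V)$ and analogously for $\cM(W)$, tensor these two surjections together using right exactness, and invoke \autoref{tensorOfInduced} on each representable summand $\cM(a) \otimes_R \cM(b)$. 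For the presentation degree, I would assemble the induced presentations of each $\cM(a)\otimes_R \cM(b)$ from \autoref{tensorOfInduced} into an induced partial presentation of the direct sum $\bigoplus \cM(a) \otimes_R \cM(b)$, then chase a snake-lemma sequence to combine this with the kernels of the surjections from representables to $\cM(V)$ and $\cM(W)$.

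Applying the bootstrap to $\cM^0_\cA \otimes_R \cM^0_\cB$ produces an induced partial presentation $Q^1 \to Q^0 \twoheadrightarrow \cM^0_\cA \otimes_R \cM^0_\cB \to 0$ with $Q^0$ generated in degree $\leq d_\cA + d_\cB + \min(d_\cA, d_\cB)$ (respectively $+1$) and $Q^1$ in degree $\leq d_\cA + d_\cB + \min(d_\cA, d_\cB) + 2$ (respectively $+4$). Composing the surjections yields an induced cover $Q^0 \twoheadrightarrow \cA \otimes_R \cB$ of the claimed generation degree, and its kernel fits in the short exact sequence
\[ 0 \to \im(Q^1 \to Q^0) \to \ker(Q^0 \to \cA \otimes_R \cB) \to \ker(\cM^0_\cA \otimes_R \cM^0_\cB \to \cA \otimes_R \cB) \to 0. \]
The rightmost term equals the image of $\cM^0_\cA \otimes_R \cM^1_\cB \oplus \cM^1_\cA \otimes_R \cM^0_\cB$, whose generation degree is bounded by the bootstrap applied to each factor. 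Taking the maximum of the three resulting bounds yields the stated relation degree. The main obstacle is the bootstrap itself: $\cM(V) \otimes_R \cM(W)$ is typically not an induced module (cf.\ \autoref{RemarkBoundsSharp}), so the representable-level presentation degree bound of \autoref{tensorOfInduced} must be transferred through two layers of surjections while carefully tracking how kernels stack up through both the representable-to-induced passage and the tensor product.
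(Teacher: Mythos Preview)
Your approach is essentially the same as the paper's: tensor together partial resolutions of $\cA$ and $\cB$ by induced modules, then control the generation and presentation degree of each tensor factor via \autoref{tensorOfInduced}. The paper packages the second half differently, replacing your explicit short exact sequence for $\ker(Q^0\to\cA\otimes_R\cB)$ with the hyperhomology spectral sequence for $H_0^{\Cat}$ applied to the total complex $\cP_\bullet^\cA\otimes_R\cP_\bullet^\cB$; the resulting inequalities
\[
\deg H_1^{\Cat}(\cA\otimes_R\cB)\;\leq\;\max\Big(\deg H_0^{\Cat}\big((\cP_0^\cA\otimes\cP_1^\cB)\oplus(\cP_1^\cA\otimes\cP_0^\cB)\big),\;\deg H_1^{\Cat}(\cP_0^\cA\otimes\cP_0^\cB)\Big)
\]
are exactly what your snake-lemma chase produces. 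Your explicit ``bootstrap'' from representables $\cM(a)\otimes_R\cM(b)$ to general induced tensors $\cM(V)\otimes_R\cM(W)$ fills a step the paper leaves implicit (it simply invokes \autoref{tensorOfInduced} for the $\cP_i^\cA\otimes_R\cP_j^\cB$); a cleaner alternative, available for general $R$, is to choose each $\cP_i$ at the outset to be a direct sum of representables, at the harmless cost of replacing $r$ by $\max(r,d)$, which is absorbed by the third term in the final maximum.
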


\begin{proof}
Let $\cP_{\bullet}^\cA$ and $\cP_{\bullet}^\cB$ be a resolutions of $\cA$ and $\cB$ respectively by induced $\Cat$--modules with $\cP^{\cA}_0$, $\cP^{\cB}_0$, $\cP^{\cA}_1$, and $\cP^{\cB}_0$ generated in degree $\leq d_\cA, d_\cB, r_\cA, r_\cB$ respectively. Take the total complex associated to the double complex $\cP_{\bullet}^\cA \otimes_R \cP_{\bullet}^\cB$. The total complex is exact because the rows and columns of the double complex are. Thus, we have a resolution
$$\ldots \to \Big( (\cP_{0}^A \otimes_R \cP_{1}^\cB) \oplus (\cP_{1}^\cA \otimes_R \cP_{0}^\cB) \Big) \to \Big( \cP_0^\cA \otimes_R \cP_0^\cB \Big) \to \cA \otimes_R \cB.$$ Define the degree of a $\Cat$--module $\cC$ to be the largest number $n$ such that $\cC_n \neq 0$ and denote this by $\deg \cC$. By considering the hyperhomology spectral sequence associated to this resolution and the functor $H_0^\Cat$, we see that 
\[\deg H_0^\Cat(\cA \otimes_R \cB)  \leq \deg H_0^\Cat \left (\cP_0^\cA \otimes_R \cP_0^\cB \right ) \] and \[\deg H_1^\Cat(\cA \otimes_R \cB)  \leq \max \Big( \deg H_0^\Cat \left ( (\cP_{0}^A \otimes_R \cP_{1}^\cB) \oplus (\cP_{1}^\cA \otimes_R \cP_{0}^\cB) \right)  , \; \deg H_1^\Cat \left (        \cP_0^\cA \otimes_R \cP_0^\cB    \right )       \Big ). \]  
\autoref{tensorOfInduced} then implies that for  $\Cat=\VIC^{\U}(\bk)$, 
\begin{align*}
\deg H_0^\Cat \left (\cP_0^\cA \otimes_R \cP_0^\cB \right )  &\leq d_\cA+d_\cB+\min(d_{\cA}, d_{\cB}),\\ 
\deg H_0^\Cat \Big( (\cP_{0}^A \otimes_R \cP_{1}^\cB) \oplus (\cP_{1}^\cA \otimes_R \cP_{0}^\cB) \Big) & \leq \max\Big(d_\cA+r_\cB+\min(d_\cA, r_\cB), r_\cA+d_\cB+\min(r_\cA,d_\cB)\Big), \text{ and } \\ 
\deg H_1^\Cat \left (\cP_0^\cA \otimes_R \cP_0^\cB \right )  &\leq d_\cA+d_\cB+\min(d_{\cA}, d_{\cB})+2.
\end{align*} 
 For  $\Cat=\SI(\bk)$, 
\begin{align*}
\deg H_0^\Cat \left (\cP_0^\cA \otimes_R \cP_0^\cB \right )  &\leq d_\cA+d_\cB+\min(d_{\cA}, d_{\cB}) +1,\\ 
\deg H_0^\Cat \Big( (\cP_{0}^A \otimes_R \cP_{1}^\cB) \oplus (\cP_{1}^\cA \otimes_R \cP_{0}^\cB) \Big) +1 & \leq \max\Big(d_\cA+r_\cB+\min(d_\cA, r_\cB), r_\cA+d_\cB+\min(r_\cA,d_\cB)+1\Big), \text{ and } \\ 
\deg H_1^\Cat \left (\cP_0^\cA \otimes_R \cP_0^\cB \right )  &\leq d_\cA+d_\cB+\min(d_{\cA}, d_{\cB})+4.
\end{align*} 
The claim now follows from \autoref{presentLemma} wich relates vanishing of $H_0^{\Cat}$ and $H_1^{\Cat}$ to generation and relation degree.

\end{proof}

\subsection{Homological stability with twisted coefficients}

In this subsection, we prove \autoref{MCGstab} and \autoref{AutFnStab}.

An inclusion of a surface $\Sigma_{g,1}$ into $\Sigma_{g+1,1}$ induces a map $\Mod(\Sigma_{g,1}) \m \Mod(\Sigma_{g+1,1})$. If $\A$ is an $\SI(\Z/p\Z)$--module, then this inclusion map gives a map: \[H_i(\Mod(\Sigma_{g,1});\A_g) \m H_i(\Mod(\Sigma_{g+1,1});\A_{g+1}).\] See Putman--Sam \cite[Section 4]{PS} for more details on this and the analogous construction in the case of $\Aut(F_n)$ and $\VIC^\pm(\Z/p\Z)$--modules.

\begin{proof}[Proof of \autoref{MCGstab}]
Let $\G_g$ denote $\Sp_{2g}(\Z/p\Z)$, let $R$ be a field of characteristic zero, and let $\A$ be an $\SI(\Z/p\Z)$--module over $R$ with generation degree $d$ and relation degree $r$. Given a group $Q$, let $C_*(Q;R)$ denote a chain complex computing group homology of $Q$ with coefficients in $R$. All $R[\G_g]$--modules are flat, so the operation of tensoring over $R[\G_g]$ commutes with taking homology. We have \begin{align*} H_i(\Mod(\Sigma_{g,1});\cA_g) \cong& H_i( C_{*}(\Mod(\Sigma_{g,1},p);R) \otimes_{R[\G_g]} \cA_g)\\
\cong& H_i( \Mod(\Sigma_{g,1},p);R) \otimes_{R[\G_g]} \cA_g\\
\cong& \left(H_i( \Mod(\Sigma_{g,1},p);R) \otimes_R \cA_g \right)_{\G_g}  .
\end{align*} 

Let $D_i$ denote the generation degree of $H_i( \Mod(\Sigma,p);R)$ and $R_i$ denote the relation degree. By \autoref{ResolvingTensors}, $H_i( \Mod(\Sigma,p);R) \otimes_R \A_g$ has generation degree $\leq D_i+d + \min(D_i, d)$ and relation degree $$\leq \max(D_i+r + \min(D_i, r)+1,R_i+d+\min(R_i, d)+1, D_i+d + \min(D_i, d) +4).$$ By \autoref{BoundingCoinvariants}, 
\[\left(H_i( \Mod(\Sigma_{g,1},p);R) \otimes_R \A_g \right)_{\G_g} \cong \left(H_i( \Mod(\Sigma_{g+1,1},p);R) \otimes_R \A_g \right)_{\G_{g+1}} \] for $g \geq \max(D_i+r + \min(D_i, r)+1,R_i+d+\min(R_i, d)+1, D_i+d + \min(D_i, d) +4)$. The claim now follows from the bounds on $D_i$ and $R_i$ from \autoref{detailedModp}.
\end{proof}

\begin{proof}[Proof of \autoref{AutFnStab}]
The proof is the same as the proof of \autoref{MCGstab} except we use the bounds from \autoref{detailedAutp}.
\end{proof}


\bibliographystyle{amsalpha}
\bibliography{Levelp}

\end{document}